\newtheorem{lemma}{Lemma}
\newcommand{\integers}{{\mathbb Z}}
\newcommand{\realnos}{{\mathbb R}}
\def\ov{\overline}
\def\Nu{{\rm N}}
\def\Mu{{\rm M}}
\def\Tau{{\rm T}}
\def\Eta{{\rm H}}
\def\Kappa{{\rm K}}
\def\Rho{{\rm P}}
\begin{document}

\title{Co-Seifert Fibrations of Compact Flat 3-Orbifolds}

\author{John G. Ratcliffe and Steven T. Tschantz}

\address{Department of Mathematics, Vanderbilt University, Nashville, TN 37240
\vspace{.1in}}

\email{j.g.ratcliffe@vanderbilt.edu}

\date{}

\begin{abstract}
This paper is a continuation of our previous paper \cite{R-T-Co} in which we developed 
the theory for classifying geometric fibrations 
of compact, connected, flat $n$-orbifolds, over a 1-orbifold, up to affine equivalence. 
In this paper, we apply our theory to classify all the geometric fibrations 
of compact, connected, flat $3$-orbifolds, over a 1-orbifold, up to affine equivalence. 
\end{abstract}

\maketitle

\section{Introduction}\label{S:1} 
An {\it $n$-dimensional crystallographic group} ({\it $n$-space group}) 
is a discrete group $\Gamma$ of isometries of Euclidean $n$-space $E^n$ 
whose orbit space $E^n/\Gamma$ is compact. 
If $\Gamma$ is an $n$-space group, then $E^n/\Gamma$ is a compact, connected, 
flat $n$-orbifold, and conversely if $M$ is a compact, connected, flat $n$-orbifold, 
then there is an $n$-space group $\Gamma$ such that $M$ is isometric to $E^n/\Gamma$. 

In our paper \cite{R-T}, we proved that a geometric fibration of $E^n/\Gamma$ 
corresponds to a space group extension 
$$1 \to \Nu \hookrightarrow \Gamma \to \Gamma/\Nu\to 1.$$
The corresponding geometric fibration of $E^n/\Gamma$ is said to be a {\it co-Seifert fibration} when the base space is a 1-orbifold, or equivalently, when $\Gamma/\Nu$ is a 1-space group. 

In our previous paper \cite{R-T-Co}, we develop the theory for classifying all co-Seifert geometric fibrations of compact, connected, flat $n$-orbifolds up to affine equivalence. 
By Theorems 5 and 10 of \cite{R-T}, this problem is equivalent to classifying 
all pairs $(\Gamma,\Nu)$ such that $\Gamma$ is an $n$-space 
group and $\Nu$ is a normal subgroup of $\Gamma$, such that $\Gamma/\Nu$ is 
infinite cyclic or infinite dihedral, up to isomorphism. 
In our paper \cite{R-T-B}, we proved that for each dimension $n$, there are only finitely many isomorphism classes of such pairs. In \cite{R-T-Co}, we described the classification for $n = 2$.  
In this paper, we describe the classification for $n = 3$. 
In the process, we determine the group of affinities of every compact, connected, flat $2$-orbifold. 
In particular, we show that the group of affinities of a flat torus has an interesting free product 
with amalgamation decomposition. 
Finally, as an application, we give an explanation for all but one of the enantiomorphic 3-space group pairs.

\section{Organization and Description of the Classification} 

This paper is a continuation of \cite{R-T-Co}, and we will refer to \cite{R-T-Co} for 
all definitions and basic results.  As explained in \S 9 of \cite{R-T-Co}, the classification 
follows from knowledge of the action of the structure group. 
In this paper, we describe the action of the structure group for the generalized Calabi constructions 
corresponding to the Seifert and dual co-Seifert geometric fibrations of compact, connected, 
flat 3-orbifolds given in Table 1 of \cite{R-T}. 
We did not describe the co-Seifert fibration projections in Table 1 of \cite{R-T} 
because we did not have an efficient way of giving a description. 
Our generalized Calabi construction gives us a simple way to simultaneously describe 
both a Seifert fibration and its dual co-Seifert fibration of a compact, connected, flat 3-orbifold 
via Theorem 4 and Corollary 1 of \cite{R-T-Co} (see also \cite{R-T-C}).

We will organize our description into 17 tables, 
one for each 2-space group type of the generic fiber of the co-Seifert fibration. 
We will go through the 2-space groups in reverse order because 
the complexity of the description generally increases in reverse order of the IT number of a 2-space group. 
That the computer generated affine classification of the co-Seifert geometric fibrations 
described in Table 1 of \cite{R-T} is correct and complete follows from Theorems 7 -- 12 of \cite{R-T-Co} 
and Lemmas \ref{L1} -- \ref{L33} below. 

\begin{enumerate}
\item In each of these 17 tables, the first column lists the IT (international tables) number of the 3-space group $\Gamma$. 

\item The second column lists the generic fibers $(V/\Nu, V^\perp/\Kappa$) of the co-Seifert and Seifert fibrations 
corresponding to a 2-dimensional, complete, normal subgroup $\Nu$ of $\Gamma$ with $V = \mathrm{Span}(\Nu)$ and $\Kappa = \Nu^\perp$. 
We will denote a circle by $\mathrm{O}$ and a closed interval by $\mathrm{I}$. 

\item The third column lists the isomorphism type of the structure group $\Gamma/\Nu\Kappa$,  
with $C_n$ indicating a cyclic group of order $n$,  
and $D_n$ a dihedral group of order $2n$. 

\item The fourth column lists the quotients $(V/(\Gamma/\Kappa),V^\perp/(\Gamma/\Nu))$ 
under the action of the structure group.  
Note that $(V/(\Gamma/\Kappa), V^\perp/(\Gamma/\Nu))$  are the bases of the Seifert and co-Seifert fibrations. 

\item The fifth column indicates how generators of the 
structure group act diagonally on the Cartesian product $V/\Nu \times V^\perp/\Kappa$. 
The structure group action was derived from the standard affine representation 
of $\Gamma$ in Table 1B of \cite{B-Z}. 
We denote a rotation of $\mathrm{O}$ of $(360/n)^\circ$ by $n$-rot., 
and a reflection of $\mathrm{O}$ or $\mathrm{I}$ by ref. 
We denote the identity map by idt. 

\item The sixth column lists the classifying pairs for the co-Seifert fibrations. 
These are pairs $\{\alpha, \beta\}$ of affinities of $V/\Nu$ that classify co-Seifert fibrations 
via Theorems 7 -- 10 of \cite{R-T-Co}. 
For $C_n$ actions, $\alpha$ and $\beta$ are inverse affinities of order $n$. 
For $D_1$ actions, $\alpha = \mathrm{idt.}$, and $\beta$ has order 2. 
For $D_n$ actions,  with $n > 1$, $\alpha$ and $\beta$ are affinities of order 2 
whose product has order $n$.  In particular, for $D_n$ actions given by 
($\alpha$, ref.), ($\gamma$, $n$-rot.), the classifying pair is $\{\alpha,\beta\}$ 
with $\gamma = \alpha\beta$, except for the cases $n = 3, 6$ at the end of Tables \ref{T16} and \ref{T17},  
which require an affine deformation.  
We indicate a classifying pair that falls into the case $E_1\cap E_2 \neq \{0\}$ 
of Theorem 10 of \cite{R-T-Co} by an asterisk. 
\end{enumerate}

Tables \ref{T1} -- \ref{T17} were first computed by hand, and then they were double checked by a computer calculation.

Let $C_\infty$ be the standard infinite cyclic 1-space group $\langle e_1+I\rangle$, 
and let $D_\infty$ be the standard infinite dihedral 1-space group $\langle e_1+I, -I\rangle$. 

If $\Mu$ is a 2-space group, 
we define the {\it symmetry group} of the flat orbifold $E^2/\Mu$ 
by $\mathrm{Sym}(\Mu) = \mathrm{Isom}(E^2/\Mu)$. 
We will identify $V/\Nu$ with $E^2/\mathrm{M}$ and the group of affinities $\mathrm{Aff}(V/\Nu)$ of $V/\Nu$ 
with $\mathrm{Aff(M)} =\mathrm{Aff}(E^2/\mathrm{M})$.

\section{Generic Fiber $\ast632$ ($30^\circ - 60^\circ$ right triangle) with IT number 17} 

The 2-space group $\Mu$ with IT number 17 is $\ast632$ in Conway's notation \cite{Conway} or $p6m$ in IT notation.  
See space group 4/4/1/1 in Table 1A of \cite{B-Z} for the standard affine representation of $\Mu$. 
The flat orbifold $E^2/\Mu$ is a $30^\circ-60^\circ$ right triangle. 

\begin{lemma}\label{L1} 
If $\Mu$ is the 2-space group $\ast 632$, 
then $\mathrm{Sym}(\Mu) = \mathrm{Aff}(\Mu) =\{\rm idt.\}$, 
and $\Omega:  \mathrm{Aff}(\Mu) \to \mathrm{Out}(\Mu)$ is an isomorphism. 
\end{lemma}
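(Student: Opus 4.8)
The plan is to establish three facts in sequence: that $\mathrm{Aff}(\Mu)$ is trivial, that $\mathrm{Sym}(\Mu)$ is trivial (which is then automatic, since $\mathrm{Sym}(\Mu) \subseteq \mathrm{Aff}(\Mu)$), and that the natural map $\Omega$ is an isomorphism. The geometric input is that $E^2/\Mu$ is a $30^\circ$--$60^\circ$ right triangle, i.e.\ a scalene triangle with three distinct angles. First I would argue that any affinity of $E^2/\Mu$ must carry vertices to vertices of the same cone-angle type (a cone point of order $k$ in the orbifold structure, corresponding to the three reflector corners with angles $\pi/6$, $\pi/3$, $\pi/2$), and must carry edges (mirror boundary arcs) to edges. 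Since the three corner angles $\pi/6, \pi/3, \pi/2$ are pairwise distinct, the orbifold has no nontrivial automorphisms of its underlying combinatorial/metric structure: an affinity of the triangle that respects this labeled structure must fix each vertex and each edge, hence be the identity. This gives $\mathrm{Aff}(\Mu) = \{\mathrm{idt.}\}$, and a fortiori $\mathrm{Sym}(\Mu) = \{\mathrm{idt.}\}$.

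Next, for the isomorphism statement about $\Omega : \mathrm{Aff}(\Mu) \to \mathrm{Out}(\Mu)$, I would invoke the general correspondence between affinities of $E^2/\Mu$ and outer automorphisms of $\Mu$ established in \cite{R-T-Co} (this is the source of the homomorphism $\Omega$, presumably normalizer-quotient in $\mathrm{Aff}(E^2)$ modulo $\Mu$). Since $\mathrm{Aff}(\Mu)$ has just been shown to be trivial, proving $\Omega$ is an isomorphism reduces to showing $\mathrm{Out}(\Mu)$ is trivial, i.e.\ every automorphism of $\Mu = p6m$ is inner. One clean route: $\Omega$ is always injective by the results of \cite{R-T-Co} (an affinity inducing an inner automorphism must itself be the identity in the orbifold, by the same rigidity argument as above restricted to elements of the image), so it remains only to check surjectivity, equivalently $|\mathrm{Out}(p6m)| = 1$. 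This can be cited from the standard literature on automorphisms of the 17 wallpaper groups, or verified directly from the presentation of $p6m$ as the $(2,3,6)$ full triangle group: $\mathrm{Out}$ would have to permute the conjugacy classes of the three generating reflections, but the three mirror lines meet at the distinct angles $\pi/6,\pi/3,\pi/2$, so the vertex stabilizers have distinct orders $12, 6, 4$ and cannot be permuted, forcing $\mathrm{Out}(p6m) = 1$.

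The main obstacle, and the step deserving the most care, is pinning down exactly which structure an affinity of $E^2/\Mu$ is required to preserve, and hence why the distinctness of the three corner angles forces triviality. An affinity need not be an isometry, so ``preserves angles'' is not immediate; the correct invariant is the orbifold structure itself — the local groups at points, which an affinity must preserve since it is an orbifold diffeomorphism. The reflector corners of orders $2,3,6$ (dihedral local groups $D_2, D_3, D_6$) are topological/orbifold invariants, so an affinity permutes the three corners according to their local-group orders, which are all distinct; hence it fixes all three corners, fixes the three mirror edges joining them, and an affine map of the plane fixing three non-collinear points setwise-with-labels is the identity on the triangle. Once this rigidity lemma is in hand, all three conclusions of Lemma~\ref{L1} follow, and the same argument pattern will recur (with progressively larger symmetry groups) for the later 2-space group types in Tables~\ref{T1}--\ref{T17}.
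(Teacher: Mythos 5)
Your geometric intuition (the three corners have distinct local groups, so they cannot be permuted) is the same as the paper's opening move, but the paper only applies it to \emph{isometries}: a symmetry fixes each corner point, and an isometry of a triangle fixing its three vertices is the identity, giving $\mathrm{Sym}(\Mu)=\{\mathrm{idt.}\}$. Your attempt to run the same rigidity argument directly for \emph{affinities} is where the real gap lies, and it is exactly the nontrivial content of the lemma. An affinity of $E^2/\Mu$ is induced by an affine map $\tilde a$ of $E^2$ normalizing $\Mu$; knowing that the induced map fixes the three corner points only tells you that $\tilde a$ preserves each of the three orbits $\Mu v_0,\Mu v_1,\Mu v_2$. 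After composing with an element of $\Mu$ you may assume $\tilde a(v_0)=v_0$, but you cannot simultaneously arrange $\tilde a(v_1)=v_1$ and $\tilde a(v_2)=v_2$, so you never actually have ``an affine map of the plane fixing three non-collinear points,'' and the induced map on the underlying triangle is not an affine self-map of the plane to which that principle applies. To close this you need an extra step, e.g.: the linear part of $\tilde a$ normalizes the order-$12$ dihedral stabilizer of $v_0$, hence is a similarity, and preservation of the orbit $\Mu v_1$ (which has a positive minimal distance to $v_0$) forces the similarity ratio to be $1$, reducing to the isometric case; or a direct computation that $N_A(\Mu)=\Mu$. Your second step has a similar gap: showing that an automorphism of $p6m$ preserves the conjugacy class of each generating reflection (because the three maximal finite vertex stabilizers have distinct orders) does not by itself show the automorphism is inner, so your sketch does not yet prove $\mathrm{Out}(p6m)=1$; you would have to either cite this or supply the missing argument. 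Also, the aside that ``$\Omega$ is always injective'' is false in general: its kernel is trivial here only because $Z(\Mu)=\{I\}$, and for the later fibers (torus, Klein bottle, annulus, M\"obius band) $\Omega$ has a positive-dimensional kernel, which is precisely why the paper tracks the center.

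The paper's route avoids both difficulties: after the easy fact $\mathrm{Sym}(\Mu)=\{\mathrm{idt.}\}$, it notes $Z(\Mu)=\{I\}$ (Lemma 5 of \cite{R-T-I}) and then cites Theorems 1--3 and Lemma 9 of \cite{R-T-I} together with Table 5A of \cite{B-Z} to get that $\Omega$ maps $\mathrm{Sym}(\Mu)$ isomorphically onto $\mathrm{Out}_E(\Mu)$, that $\mathrm{Out}_E(\Mu)=\mathrm{Out}(\Mu)$, and that $\Omega:\mathrm{Aff}(\Mu)\to\mathrm{Out}(\Mu)$ is an isomorphism; hence $\mathrm{Aff}(\Mu)=\mathrm{Sym}(\Mu)=\{\mathrm{idt.}\}$ and $\mathrm{Out}(\Mu)=1$ come out together, with no separate computation of $\mathrm{Out}(p6m)$ and no rigidity statement needed for non-isometric affinities. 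If you want to keep your more self-contained approach, you must supply the similarity/minimal-distance (or normalizer) argument for affinities and a genuine proof or citation for $\mathrm{Out}(p6m)=1$; as written, both steps are asserted rather than proved.
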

\begin{proof} $\mathrm{Sym}(\Mu) =  \{\rm idt.\}$, 
since a symmetry of $E^2/\Mu$ fixes each corner point. 
We have that $Z(\Mu) = \{I\}$ by Lemma 5 of \cite{R-T-I}. 
Hence $\Omega: \mathrm{Sym}(\Mu) \to \mathrm{Out}_E(\Mu)$ is an isomorphism 
by Theorems 1 and 2 of \cite{R-T-I}. 
We have that $\mathrm{Out}_E(\Mu) = \mathrm{Out}(\Mu)$  by Lemma 9 of \cite{R-T-I} and Table 5A of \cite{B-Z},  
and $\Omega: \mathrm{Aff}(\Mu) \to \mathrm{Out}(\Mu)$ is an isomorphism 
by Theorems 1 and 3 in \cite{R-T-I}. 
Therefore $\mathrm{Sym}(\Mu) = \mathrm{Aff}(\Mu)$. 
\end{proof}

We represent $\mathrm{Out}(\Mu)$ by $\mathrm{Sym}(\Mu)$ via the isomorphism 
$\Omega:  \mathrm{Sym}(\Mu) \to \mathrm{Out}(\Mu)$. 
The set $\mathrm{Isom}(C_\infty, \Mu)$ consists of one element corresponding 
to the pair of inverse elements \{idt., idt.\} of $\mathrm{Sym}(\Mu)$ 
by Theorem 7 of \cite{R-T-Co}. 
The corresponding affine equivalence class of co-Seifert fibrations corresponds  
to row 1 of Table \ref{T1}. 

The set $\mathrm{Isom}(D_\infty, \Mu)$ consists of one element corresponding 
to the pair of indentity elements \{idt., idt.\} of $\mathrm{Sym}(\Mu)$ 
by Theorem 8 of \cite{R-T-Co}. 
The corresponding affine equivalence class of co-Seifert fibrations corresponds  
to row 2 of Table \ref{T1}.

Notice that the first row of Table \ref{T1} says that $E^3/\Gamma$ is the Cartesian product $\ast632 \times \mathrm{O}$, and the second row of Table \ref{T1} says that $E^3/\Gamma$ is the Cartesian product $\ast632 \times \mathrm{I}$. The corresponding co-Seifert geometric fibrations of $E^3/\Gamma$ are the Cartesian product fibrations of $\ast632 \times \mathrm{O}$ and $\ast632 \times \mathrm{I}$ 
with fiber $\ast632$. 

\begin{table}  
\begin{tabular}{llllll}
no. & fibers & grp. & quotients &  group action & classifying pair \\
\hline 
183  & $(\ast632, \mathrm{O})$ & $C_1$ & $(\ast632, \mathrm{O})$ & (idt., idt.)  & \{idt., idt.\} \\
191  & $(\ast632, \mathrm{I})$   & $C_1$ & $(\ast632, \mathrm{I})$ & (idt., idt.) & \{idt., idt.\} \end{tabular}

\medskip
\caption{The classification of the co-Seifert fibrations of 3-space groups 
whose generic fiber is of type $\ast 632$ with IT number 17}\label{T1}
\end{table}

\section{Generic Fiber $632$ (turnover) with IT number 16}  

The 2-space group $\Mu$ with IT number 16 is $632$ in Conway's notation or $p6$ in IT notation. 
See space group 4/3/1/1 in Table 1A of \cite{B-Z} for the standard affine representation of $\Mu$. 
The flat orbifold $E^2/\Mu$  is a turnover with 3 cone points 
obtained by gluing together two congruent $30^\circ-60^\circ$ right triangles along their boundaries. 
The $632$ turnover is orientable. 
Let c-ref.\ denote the {\it central reflection} between the two triangles. 

\begin{lemma}\label{L2} 
If $\Mu$ is the 2-space group $632$, 
then $\mathrm{Sym}(\Mu) = \mathrm{Aff}(\Mu) =$ \{\rm idt., c-ref.\}, 
and $\Omega:  \mathrm{Aff}(\Mu) \to \mathrm{Out}(\Mu)$ is an isomorphism. 
\end{lemma}
\begin{proof} $\mathrm{Sym}(\Mu)$ = \{\rm idt., c-ref.\}, 
since a symmetry of $E^2/\Mu$ fixes each cone point. 
We have that $\mathrm{Sym}(\Mu) = \mathrm{Aff}(\Mu)$ 
and $\Omega:  \mathrm{Aff}(\Mu) \to \mathrm{Out}(\Mu)$ is an isomorphism as in Lemma \ref{L1}. 
\end{proof}

\begin{table} 
\begin{tabular}{llllll}
no. & fibers & grp. & quotients &  group action & classifying pair \\
\hline 
168 &  $(632, \mathrm{O})$ & $C_1$ & $(632, \mathrm{O})$ & (idt., idt.) & \{idt., idt.\}  \\
175 &  $(632, \mathrm{I})$    & $C_1$ & $(632, \mathrm{I})$ & (idt., idt.) & \{idt., idt.\} \\
177 &  $(632, \mathrm{O})$ & $C_2$ & $(\ast632, \mathrm{I})$ & (c-ref., ref.)  & \{c-ref., c-ref.\} \\
184 &  $(632, \mathrm{O})$    & $C_2$ & $(\ast632, \mathrm{O})$ & (c-ref., 2-rot.) & \{c-ref., c-ref.\} \\
192 &  $(632, \mathrm{I})$    & $D_1$ & $(\ast632, \mathrm{I})$ & (c-ref., ref.) & \{idt., c-ref.\} 
\end{tabular}

\medskip
\caption{The classification of the co-Seifert fibrations of 3-space groups 
whose generic fiber is of type $632$ with IT number 16}\label{T2}
\end{table}

We represent $\mathrm{Out}(\Mu)$ by $\mathrm{Sym}(\Mu)$ via the isomorphism 
$\Omega:  \mathrm{Sym}(\Mu) \to \mathrm{Out}(\Mu)$. 
The set $\mathrm{Isom}(C_\infty, \Mu)$ consists of two elements corresponding 
to the pairs of inverse elements \{idt., idt.\} and \{c-ref., c-ref.\} of $\mathrm{Sym}(\Mu)$ 
by Theorem 7 of \cite{R-T-Co}. 
The corresponding affine equivalence classes of co-Seifert fibrations correspond 
to the two rows of Table \ref{T2} whose column 4 second quotient is $\mathrm{O}$. 

The set $\mathrm{Isom}(D_\infty, \Mu)$ consists of three elements corresponding 
to the pairs of elements \{idt., idt.\}, \{c-ref., c-ref.\}, \{idt., c-ref.\} of order 1 or 2 of $\mathrm{Sym}(\Mu)$ 
by Theorem 8 of \cite{R-T-Co}. 
The corresponding affine equivalence classes of co-Seifert fibrations correspond  
to the three rows of Table \ref{T2} whose column 4 second quotient is $\mathrm{I}$.

\section{Generic Fiber $3{\ast}3$ (cone) with IT number 15} 

The 2-space group $\Mu$ with IT number 15 is $3{\ast}3$ in Conway's notation or $p31m$ in IT notation. 
See space group 4/2/2/1 in Table 1A of \cite{B-Z} for the standard affine representation of $\Mu$. 
The  flat orbifold $E^2/\Mu$ is a cone with one $120^\circ$ cone point and one  $60^\circ$ corner point   
obtained by gluing together two congruent $30^\circ-60^\circ$ right triangles along their sides 
opposite the $30^\circ$ and $90^\circ$ angles. 
Let c-ref.\ denote the {\it central reflection} between the two triangles. 

\begin{lemma}\label{L3} 
If $\Mu$ is the 2-space group $3\ast 3$, 
then $\mathrm{Sym}(\Mu) = \mathrm{Aff}(\Mu) =$ \{\rm idt., c-ref.\}, 
and $\Omega:  \mathrm{Aff}(\Mu) \to \mathrm{Out}(\Mu)$ is an isomorphism. 
\end{lemma}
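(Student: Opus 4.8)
The plan is to follow the same two-step strategy used to prove Lemmas~\ref{L1} and~\ref{L2}: first determine $\mathrm{Sym}(\Mu)$ directly by a short geometric argument about the flat orbifold $E^2/\Mu$, and then transfer the computation to $\mathrm{Aff}(\Mu)$ and $\mathrm{Out}(\Mu)$ by quoting the isomorphism theorems of \cite{R-T-I}, exactly as in Lemma~\ref{L1}.

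For the geometric step, I would use that $E^2/\Mu$ is a disk carrying a single interior cone point $P$ (the $120^\circ$ point, of order $3$) and, on its mirror boundary, a single corner reflector $Q$ (the $60^\circ$ corner point, of order $3$). Any isometry of $E^2/\Mu$ permutes the cone points and permutes the corner reflectors, so it must fix both $P$ and $Q$, since each is the unique point of its kind. Such an isometry preserves the geodesic arc joining $P$ to $Q$, hence is either the identity or the reflection in that arc. The reflecting case is realized by the central reflection c-ref., which interchanges the two congruent $30^\circ-60^\circ$ right triangles making up $E^2/\Mu$ and is therefore a genuine isometry, not just a homeomorphism. Thus $\mathrm{Sym}(\Mu) = \{\mathrm{idt.},\ \text{c-ref.}\}$, a group of order $2$.

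For the algebraic step, the argument is the one spelled out in Lemma~\ref{L1}: one has $Z(\Mu) = \{I\}$ by Lemma~5 of \cite{R-T-I}, so $\Omega\colon \mathrm{Sym}(\Mu)\to\mathrm{Out}_E(\Mu)$ is an isomorphism by Theorems~1 and~2 of \cite{R-T-I}; then $\mathrm{Out}_E(\Mu) = \mathrm{Out}(\Mu)$ by Lemma~9 of \cite{R-T-I} together with Table~5A of \cite{B-Z}; and $\Omega\colon \mathrm{Aff}(\Mu)\to\mathrm{Out}(\Mu)$ is an isomorphism by Theorems~1 and~3 of \cite{R-T-I}. Composing these isomorphisms gives $\mathrm{Sym}(\Mu) = \mathrm{Aff}(\Mu) = \{\mathrm{idt.},\ \text{c-ref.}\}$ and the asserted isomorphism $\Omega\colon \mathrm{Aff}(\Mu)\to\mathrm{Out}(\Mu)$.

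I do not anticipate any real obstacle here. The only point that calls for a moment's care is the geometric one: confirming that the constraint of fixing the unique cone point and the unique corner reflector, combined with the disk topology, really forces $\mathrm{Sym}(\Mu)$ down to order $2$, and that c-ref.\ is an isometry (which it is, being the reflection identifying the two congruent triangles across their common edge). Once $\mathrm{Sym}(\Mu)$ is pinned down, the passage to $\mathrm{Aff}(\Mu)$ and $\mathrm{Out}(\Mu)$ is routine bookkeeping with \cite{R-T-I} and \cite{B-Z}, identical to that already carried out for the fibers $\ast632$ and $632$.
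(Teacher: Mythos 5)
Your proposal is correct and follows essentially the same route as the paper: the paper's proof likewise pins down $\mathrm{Sym}(\Mu)=\{\mathrm{idt.},\ \hbox{c-ref.}\}$ by noting that a symmetry must fix the unique cone point and the unique corner point, and then transfers to $\mathrm{Aff}(\Mu)$ and $\mathrm{Out}(\Mu)$ exactly "as in Lemma \ref{L1}" via the cited results of \cite{R-T-I} and Table 5A of \cite{B-Z}. Your extra detail on why fixing both distinguished points forces the symmetry to be the identity or the central reflection is just an elaboration of the same argument.
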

\begin{proof} $\mathrm{Sym}(\Mu)$ = \{\rm idt., c-ref.\}, 
since a symmetry of $E^2/\Mu$ fixes the cone point and the corner point. 
We have that $\mathrm{Sym}(\Mu) = \mathrm{Aff}(\Mu)$ and $\Omega:  \mathrm{Aff}(\Mu) \to \mathrm{Out}(\Mu)$ is an isomorphism as in Lemma \ref{L1}. 
\end{proof}

The set $\mathrm{Isom}(C_\infty, \Mu)$ consists of two elements corresponding 
to the pairs of inverse elements \{idt., idt.\} and \{c-ref., c-ref.\} of $\mathrm{Sym}(\Mu)$ 
by Theorem 7 of \cite{R-T-Co}. 
The corresponding affine equivalence classes of co-Seifert fibrations correspond 
to the two rows of Table \ref{T3} whose column 4 second quotient is $\mathrm{O}$.

The set $\mathrm{Isom}(D_\infty, \Mu)$ consists of three elements corresponding 
to the pairs of elements \{idt., idt.\}, \{c-ref., c-ref.\}, \{idt., c-ref.\} of order 1 or 2 of $\mathrm{Sym}(\Mu)$ 
by Theorem 8 of \cite{R-T-Co}. 
The corresponding affine equivalence classes of co-Seifert fibrations correspond  
to the three rows of Table \ref{T3} whose column 4 second quotient is $\mathrm{I}$.

\begin{table} 
\begin{tabular}{llllll}
no. & fibers & grp. & quotients &  group action & classifying pair \\
\hline 
157 & $(3\!\ast\! 3, \mathrm{O})$ & $C_1$ & $(3\!\ast\! 3, \mathrm{O})$ & (idt., idt.) & \{idt., idt.\}  \\
162 & $(3\!\ast\! 3, \mathrm{O})$ & $C_2$ & $(\ast 632, \mathrm{I})$ & (c-ref., ref.) & \{c-ref., c-ref.\} \\
185 & $(3\!\ast\! 3, \mathrm{O})$ & $C_2$ & $(\ast 632, \mathrm{O})$ & (c-ref., 2-rot.) & \{c-ref., c-ref.\} \\
189 & $(3\!\ast\! 3, \mathrm{I})$   & $C_1$ & $(3\!\ast\! 3, \mathrm{I})$ & (idt., idt.) & \{idt., idt.\} \\
193 & $(3\!\ast\! 3, \mathrm{I})$   & $D_1$ & $(\ast 632, \mathrm{I})$ & (c-ref., ref.) & \{idt., c-ref.\}
\end{tabular}

\medskip
\caption{The classification of the co-Seifert fibrations of 3-space groups 
whose generic fiber is of type $3\!\ast\! 3$ with IT number 15}\label{T3}
\end{table}

\section{Generic Fiber $\ast 333$ (equilateral triangle) with IT number 14} 

The 2-space group $\Mu$ with IT number 14 is $\ast 333$ in Conway's notation or $p3m1$ in IT notation. 
See space group 4/2/1/1 in Table 1A of \cite{B-Z} for the standard affine representation of $\Mu$. 
The flat orbifold $E^2/\Mu$ is an equilateral triangle $\triangle$. 
The symmetry group of $\triangle$ is a dihedral group of order 6. 
There is one conjugacy class of symmetries of order 2 represented by a {\it triangle reflection}  t-ref.\ 
in the perpendicular bisector of a side. 
There is one conjugacy class of symmetries 
of order 3 represented by a rotation 3-rot.\ of $120^\circ$ about the center of the triangle. 
Define the triangle reflection t-ref.$'$ = (t-ref.)(3-rot.).  

\begin{lemma}\label{L4} 
If $\Mu$ is the 2-space group $\ast 333$, 
then $\mathrm{Sym}(\Mu)$ is the dihedral group  $\langle${\rm t-ref., 3-rot.}$\rangle$ of order 6, 
and $\mathrm{Sym}(\Mu) = \mathrm{Aff}(\Mu)$, 
and $\Omega:  \mathrm{Aff}(\Mu) \to \mathrm{Out}(\Mu)$ is an isomorphism. 
\end{lemma}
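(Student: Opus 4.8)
The plan is to mimic the structure of the proof of Lemma \ref{L1}, which established an analogous statement for $\ast 632$. The statement has three parts: (i) $\mathrm{Sym}(\Mu)$ is the dihedral group of order $6$ generated by t-ref.\ and 3-rot.; (ii) $\mathrm{Sym}(\Mu) = \mathrm{Aff}(\Mu)$; and (iii) $\Omega \colon \mathrm{Aff}(\Mu) \to \mathrm{Out}(\Mu)$ is an isomorphism. I would take these in order.

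For part (i), the flat orbifold $E^2/\Mu$ is an equilateral triangle $\triangle$, and a symmetry of $E^2/\Mu$ is exactly an isometry of $\triangle$ as a metric space. Since $\triangle$ is a genuine Euclidean equilateral triangle, its isometry group is the classical dihedral group of order $6$: the three vertices (which are the corner points, the only points with cone-angle $60^\circ$) must be permuted among themselves, and any permutation of the three vertices of an equilateral triangle is realized by a unique isometry. Thus $\mathrm{Sym}(\Mu)$ is generated by the reflection t-ref.\ in a perpendicular bisector of a side together with the $120^\circ$ rotation 3-rot.\ about the center, giving the dihedral group $\langle \text{t-ref.}, \text{3-rot.}\rangle$ of order $6$.

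For parts (ii) and (iii), I would invoke exactly the same chain of results from \cite{R-T-I} used in Lemma \ref{L1}. First, $Z(\Mu) = \{I\}$ by Lemma 5 of \cite{R-T-I} (one checks that $\ast 333 = p3m1$ has trivial center, as is the case for all the plane groups with point group containing reflections and a $3$-fold rotation). Then $\Omega \colon \mathrm{Sym}(\Mu) \to \mathrm{Out}_E(\Mu)$ is an isomorphism by Theorems 1 and 2 of \cite{R-T-I}. Next, $\mathrm{Out}_E(\Mu) = \mathrm{Out}(\Mu)$ by Lemma 9 of \cite{R-T-I} together with the relevant entry (space group $4/2/1/1$) of Table 5A of \cite{B-Z}, and $\Omega \colon \mathrm{Aff}(\Mu) \to \mathrm{Out}(\Mu)$ is an isomorphism by Theorems 1 and 3 of \cite{R-T-I}. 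Combining these, the composite $\mathrm{Sym}(\Mu) \hookrightarrow \mathrm{Aff}(\Mu) \xrightarrow{\Omega} \mathrm{Out}(\Mu)$ is an isomorphism and $\Omega$ itself is an isomorphism on $\mathrm{Aff}(\Mu)$, which forces the inclusion $\mathrm{Sym}(\Mu) \subseteq \mathrm{Aff}(\Mu)$ to be an equality; hence $\mathrm{Sym}(\Mu) = \mathrm{Aff}(\Mu)$ and $\Omega \colon \mathrm{Aff}(\Mu) \to \mathrm{Out}(\Mu)$ is an isomorphism.

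The only genuinely new content compared to Lemma \ref{L1} is that here $\mathrm{Aff}(\Mu)$ is nontrivial, so the main (modest) obstacle is confirming that the abstract group $\mathrm{Out}(\Mu)$ really has order $6$ and dihedral structure, i.e.\ that the table lookups in \cite{B-Z} match the geometrically evident symmetry group of $\triangle$; once $|\mathrm{Sym}(\Mu)| = |\mathrm{Out}(\Mu)| = 6$ and $\Omega$ is injective on $\mathrm{Aff}(\Mu) \supseteq \mathrm{Sym}(\Mu)$, everything closes up by counting. I would also remark in passing that the auxiliary reflection $\text{t-ref.}' = (\text{t-ref.})(\text{3-rot.})$ introduced before the lemma is simply the reflection in the perpendicular bisector of another side, which will be used in the subsequent tables to name the $D_n$ classifying pairs.
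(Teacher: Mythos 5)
Your proposal is correct and follows essentially the same route as the paper: the paper identifies $\mathrm{Sym}(\Mu)$ as $\langle$t-ref., 3-rot.$\rangle$ because a symmetry permutes the corner points, and then cites the same chain from \cite{R-T-I} (triviality of $Z(\Mu)$, Theorems 1--3, Lemma 9 together with Table 5A of \cite{B-Z}) exactly as in Lemma \ref{L1} to get $\mathrm{Sym}(\Mu) = \mathrm{Aff}(\Mu)$ and that $\Omega$ is an isomorphism. Your spelled-out version of the "as in Lemma \ref{L1}" step, including the counting argument forcing $\mathrm{Sym}(\Mu) = \mathrm{Aff}(\Mu)$, is precisely what the paper's abbreviated proof intends.
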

\begin{proof}  $\mathrm{Sym}(\Mu)$ = $\langle${\rm t-ref., 3-rot.}$\rangle$, 
since a symmetry permutes the corner points. 
We have that $\mathrm{Sym}(\Mu) = \mathrm{Aff}(\Mu)$ 
and $\Omega:  \mathrm{Aff}(\Mu) \to \mathrm{Out}(\Mu)$ is an isomorphism as in Lemma \ref{L1}. 
\end{proof}

We represent $\mathrm{Out}(\Mu)$ by $\mathrm{Sym}(\Mu)$ via the isomorphism 
$\Omega:  \mathrm{Sym}(\Mu) \to \mathrm{Out}(\Mu)$. 
The set $\mathrm{Isom}(C_\infty, \Mu)$ consists of three elements corresponding 
to the pairs of inverse elements \{idt., idt.\}, \{3-rot., 3-rot.$^{-1}$\}, \{t-ref., t-ref.\} of $\mathrm{Sym}(\Mu)$ 
by Theorem 7 of \cite{R-T-Co}. 
The corresponding affine equivalence classes of co-Seifert fibrations correspond 
to the three rows of Table \ref{T4} whose column 4 second quotient is $\mathrm{O}$. 

The set $\mathrm{Isom}(D_\infty, \Mu)$ consists of four elements corresponding 
to the pairs of elements \{t-ref., t-ref.\}, \{t-ref., t-ref.$'$\}, \{idt., idt.\}, \{idt., t-ref.\} of order 1 or 2 
of $\mathrm{Sym}(\Mu)$ by Theorem 8 of \cite{R-T-Co}. 
The corresponding affine equivalence classes of co-Seifert fibrations correspond 
to the four rows of Table \ref{T4} whose column 4 second quotient is $\mathrm{I}$.

\begin{table}  
\begin{tabular}{llllll}
no. & fibers & grp. & quotients &  structure group action & classifying pair \\
\hline 
156 & $(\ast 333, \mathrm{O})$ & $C_1$ & $(\ast 333, \mathrm{O})$ & (idt., idt.)  & \{idt., idt.\} \\
160 & $(\ast 333, \mathrm{O})$ & $C_3$ & $(3{\ast}3, \mathrm{O})$ & (3-rot., 3-rot.) & \{3-rot., 3-rot.$^{-1}$\} \\
164 & $(\ast 333, \mathrm{O})$ & $C_2$ & $(\ast 632, \mathrm{I})$ & (t-ref., ref.) & \{t-ref., t-ref.\} \\
166 & $(\ast 333, \mathrm{O})$ & $D_3$ & $(\ast 632, \mathrm{I})$ & (t-ref., ref.), (3-rot., 3-rot.) & \{t-ref., t-ref.$'$\}  \\
186 & $(\ast 333, \mathrm{O})$ & $C_2$ & $(\ast 632, \mathrm{O})$ & (t-ref., 2-rot.) & \{t-ref., t-ref.\} \\
187 & $(\ast 333, \mathrm{I})$   & $C_1$ & $(\ast 333, \mathrm{I})$ & (idt., idt.) & \{idt., idt.\} \\
194 & $(\ast 333, \mathrm{I})$   & $D_1$ & $(\ast 632, \mathrm{I})$ & (t-ref., ref.) & \{idt., t-ref.\} 
\end{tabular}

\medskip
\caption{The classification of the co-Seifert fibrations of 3-space groups 
whose generic fiber is of type $\ast 333$ with IT number 14}\label{T4}
\end{table}

\section{Generic Fiber $333$ (turnover) with IT number 13} 

The 2-space group $\Mu$ with IT number 13 is $333$ in Conway's notation or $p3$ in IT notation. 
See space group 4/1/1/1 in Table 1A of \cite{B-Z} for the standard affine representation of $\Mu$. 
The flat orbifold $E^2/\Mu$ is a turnover with three $120^\circ$ cone points 
obtained by gluing together two congruent equilateral triangles along their boundaries. 
The $333$ turnover is orientable. 
The symmetry group of this orbifold is the direct product of the subgroup of order 2, generated 
by the {\it central reflection} c-ref.\ between the two triangles, and the subgroup of order 6 corresponding 
to the symmetry group of the two triangles. 

There are 3 conjugacy classes of symmetries of order 2,  
the class of the {\it central reflection} c-ref., 
the class of the {\it triangle reflection} t-ref., 
and the class of the halfturn around a cone point 2-rot., defined so that 2-rot.\ = (c-ref.)(t-ref.). 
There is one conjugacy class of symmetries of order 3 represented by a rotation 3-rot.\ 
that cyclically permutes the cone points. 
There is one conjugacy class of dihedral subgroups of order 4, represented by the group 
\{idt., c-ref., t-ref., 2-rot.\}.  
There is one conjugacy class of symmetries of order 6 represented by 6-sym.\ = (c-ref.)(3-rot.).  
There are two conjugacy classes of dihedral subgroups of order 6, 
the class of the symmetry group of a triangular side of the turnover generated by t-ref.\ and 3-rot., 
and the class of the orientation-preserving subgroup generated by 2-rot.\ and 3-rot. 
Define t-ref.$'$ = (t-ref.)(3-rot.) and 2-rot.$'$ = (c-ref.)(t-ref.$'$). 

\begin{lemma}\label{L5} 
If $\Mu$ is the 2-space group $333$, 
then $\mathrm{Sym}(\Mu)$ is the dihedral group  $\langle${\rm c-ref., t-ref., 3-rot.}$\rangle$ of order 12, 
and $\mathrm{Sym}(\Mu) = \mathrm{Aff}(\Mu)$, 
and $\Omega:  \mathrm{Aff}(\Mu) \to \mathrm{Out}(\Mu)$ is an isomorphism.  
\end{lemma}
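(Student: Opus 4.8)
The plan is to follow the template established in Lemmas \ref{L1}--\ref{L4}: first pin down $\mathrm{Sym}(\Mu)$ by a direct geometric argument on the turnover, and then deduce $\mathrm{Sym}(\Mu) = \mathrm{Aff}(\Mu)$ and the isomorphism $\Omega$ from the general machinery of \cite{R-T-I}.

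For the geometric step, recall that $E^2/\Mu$ is the $333$ turnover, the double of an equilateral triangle $\triangle$ across its boundary, with singular set the three $120^\circ$ cone points. Every isometry of $E^2/\Mu$ preserves the singular set and hence permutes the three cone points, giving a homomorphism $\rho: \mathrm{Sym}(\Mu) \to S_3$. First I would check that $\rho$ is onto: any permutation of the corners of $\triangle$ is realized by an isometry of $\triangle$, and applying that isometry on each sheet doubles to an isometry of the turnover inducing the same permutation of cone points; this exhibits the order-$6$ dihedral subgroup $\langle$t-ref., 3-rot.$\rangle$ mapping isomorphically onto $S_3$. Next I would identify $\ker\rho$: the central reflection c-ref.\ interchanging the two sheets lies in $\ker\rho$, and conversely an isometry $g$ fixing all three cone points either preserves or swaps the sheets; if it preserves them it restricts on each sheet to an isometry of $\triangle$ fixing all three corners, hence is the identity on each sheet, and if it swaps them then $g\cdot(\text{c-ref.})$ does the former, so $\ker\rho = \{$idt., c-ref.$\}$. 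Thus $|\mathrm{Sym}(\Mu)| = 2 \cdot 6 = 12$. Finally, c-ref.\ commutes with the doubled triangle symmetries (they act identically on the two sheets), so c-ref.\ is central and $\mathrm{Sym}(\Mu) = \langle$c-ref.$\rangle \times \langle$t-ref., 3-rot.$\rangle \cong C_2 \times D_3$, a dihedral group of order $12$, generated by c-ref., t-ref., 3-rot.

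For the remaining assertions I would argue exactly as in Lemma \ref{L1}: since $Z(\Mu) = \{I\}$ by Lemma 5 of \cite{R-T-I}, the map $\Omega: \mathrm{Sym}(\Mu) \to \mathrm{Out}_E(\Mu)$ is an isomorphism by Theorems 1 and 2 of \cite{R-T-I}; moreover $\mathrm{Out}_E(\Mu) = \mathrm{Out}(\Mu)$ by Lemma 9 of \cite{R-T-I} and Table 5A of \cite{B-Z}, and $\Omega: \mathrm{Aff}(\Mu) \to \mathrm{Out}(\Mu)$ is an isomorphism by Theorems 1 and 3 of \cite{R-T-I}; combining these gives $\mathrm{Sym}(\Mu) = \mathrm{Aff}(\Mu)$. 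The one point that needs genuine care is getting the order of $\mathrm{Sym}(\Mu)$ exactly right in the geometric step: because $\mathrm{Sym}(\Mu) = \mathrm{Aff}(\Mu)$ is obtained indirectly through $\mathrm{Out}(\Mu)$ rather than by a direct comparison, any overlooked symmetry of the turnover would contradict the cited isomorphisms. So the main obstacle is the careful verification, via the exact sequence $1 \to \{$idt., c-ref.$\} \to \mathrm{Sym}(\Mu) \to S_3 \to 1$, that no isometry other than idt.\ and c-ref.\ fixes all three cone points, which is what controls $|\mathrm{Sym}(\Mu)|$.
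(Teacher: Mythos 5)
Your proposal is correct and takes essentially the same route as the paper: the paper's proof simply asserts that $\mathrm{Sym}(\Mu)$ is the group generated by c-ref., t-ref., and 3-rot.\ ``since a symmetry permutes the cone points,'' and then cites the same chain of results from \cite{R-T-I} (exactly as in Lemma \ref{L1}) that you invoke to conclude $\mathrm{Sym}(\Mu) = \mathrm{Aff}(\Mu)$ and that $\Omega$ is an isomorphism. Your explicit analysis of the permutation action on the cone points and of its kernel $\{$idt., c-ref.$\}$ is just a more detailed justification of the geometric step that the paper leaves implicit.
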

\begin{proof} $\mathrm{Sym}(\Mu) = \langle${c-ref., \rm t-ref., 3-rot.}$\rangle$, 
since a symmetry permutes the cone points.  
We have that $\mathrm{Sym}(\Mu) = \mathrm{Aff}(\Mu)$ and $\Omega:  \mathrm{Aff}(\Mu) \to \mathrm{Out}(\Mu)$ is an isomorphism as in Lemma \ref{L1}. 
\end{proof}

\begin{table}  
\begin{tabular}{llllll}
no. & fibers & grp. & quotients &  structure group action & classifying pair \\
\hline 
143 & $(333, \mathrm{O})$ & $C_1$ & $(333, \mathrm{O})$ & (idt., idt.)  & \{idt., idt.\} \\
146 & $(333, \mathrm{O})$ & $C_3$ & $(333, \mathrm{O})$ & (3-rot., 3-rot.) &  \{3-rot., 3-rot.$^{-1}$\} \\
147 & $(333, \mathrm{O})$ & $C_2$ & $(632, \mathrm{I})$ & (2-rot., ref.) & \{2-rot., 2-rot.\} \\
148 & $(333, \mathrm{O})$ & $D_3$ & $(632, \mathrm{I})$ & (2-rot., ref.), (3-rot., 3-rot.) &  \{2-rot., 2-rot.$'\}$ \\
149 & $(333, \mathrm{O})$ & $C_2$ & $(\ast 333, \mathrm{I})$ & (c-ref., ref.) & \{c-ref., c-ref.\} \\
150 & $(333, \mathrm{O})$ & $C_2$ & $(3{\ast}3, \mathrm{I})$ & (t-ref., ref.)  & \{t-ref., t-ref.\} \\
155 & $(333, \mathrm{O})$ & $D_3$ & $(\ast 333, \mathrm{I})$ & (t-ref., ref.), (3-rot., 3-rot.) & \{t-ref., t-ref.$'$\} \\
158 & $(333, \mathrm{O})$ & $C_2$ & $(\ast 333, \mathrm{O})$ & (c-ref., 2-rot.) &  \{c-ref., c-ref.\} \\
159 & $(333, \mathrm{O})$ & $C_2$ & $(3{\ast}3, \mathrm{O})$ & (t-ref., 2-rot.) &  \{t-ref., t-ref.\} \\
161 & $(333, \mathrm{O})$ & $C_6$ & $(3{\ast}3, \mathrm{O})$ & (6-sym., 6-rot.) & \{6-sym., 6-sym.$^{-1}$\} \\
163 & $(333, \mathrm{O})$ & $D_2$ & $(\ast 632, \mathrm{I})$ & (c-ref., ref.), (t-ref., 2-rot.) & \{c-ref., 2-rot.\}\\
165 & $(333, \mathrm{O})$ & $D_2$ & $(\ast 632, \mathrm{I})$ & (t-ref., ref.), (c-ref., 2-rot.) & \{t-ref., 2-rot.\} \\
167 & $(333, \mathrm{O})$ & $D_6$ & $(\ast 632, \mathrm{I})$ & (t-ref., ref.), (6-sym., 6-rot.) &  \{t-ref., 2-rot.$'$\} \\
173 & $(333, \mathrm{O})$ & $C_2$ & $(632, \mathrm{O})$ & (2-rot., 2-rot.)  & \{2-rot., 2-rot.\} \\
174 & $(333, \mathrm{I})$  & $C_1$ & $(333, \mathrm{I})$ & (idt., idt.)  & \{idt., idt.\} \\
176 & $(333, \mathrm{I})$  & $D_1$ & $(632, \mathrm{I})$ & (2-rot., ref.) &  \{idt., 2-rot.\} \\
182 & $(333, \mathrm{O})$ & $D_2$ & $(\ast 632, \mathrm{I})$ &  (c-ref., ref.), (2-rot., 2-rot.) & \{c-ref., t-ref.\} \\
188 & $(333, \mathrm{I})$   & $D_1$ & $(\ast 333, \mathrm{I})$ & (c-ref., ref.) &  \{idt., c-ref.\} \\
190 & $(333, \mathrm{I})$   & $D_1$ & $(3{\ast}3, \mathrm{I})$ & (t-ref., ref.)  & \{idt., t-ref.\} 
\end{tabular}

\medskip
\caption{The classification of the co-Seifert fibrations of 3-space groups 
whose generic fiber is of type $333$ with IT number 13}\label{T5}
\end{table}

We represent $\mathrm{Out}(\Mu)$ by $\mathrm{Sym}(\Mu)$ via the isomorphism 
$\Omega:  \mathrm{Sym}(\Mu) \to \mathrm{Out}(\Mu)$. 
The set $\mathrm{Isom}(C_\infty, \Mu)$ consists of six elements corresponding 
to the pairs of inverse elements \{idt., idt.\}, \{3-rot., 3-rot.$^{-1}$\}, \{c-ref., c-ref.\}, \{t-ref., t-ref.\}, 
\{6-sym., 6-sym.$^{-1}$\}, \{2-rot., 2-rot.\} of $\mathrm{Sym}(\Mu)$ 
by Theorem 7 of \cite{R-T-Co}.

The set $\mathrm{Isom}(D_\infty, \Mu)$ consists of thirteen elements corresponding 
to the pairs of elements \{2-rot., 2-rot.\}, \{2-rot., 2-rot.$'$\}, \{c-ref., c-ref.\}, \{t-ref., t-ref.\}, \{t-ref., t-ref.$'$\}, 
\{c-ref., 2-rot.\}, \{t-ref., 2-rot.\}, \{t-ref., 2-rot.$'$\}, \{idt., idt.\}, \{idt., 2-rot.\}, \{c-ref., t-ref.\}, \{idt., c-ref.\}, \{idt., t-ref.\} 
of order 1 or 2 of $\mathrm{Sym}(\Mu)$ by Theorem 8 of \cite{R-T-Co}. 

\medskip
\noindent{\bf Example 1.}
Let $\Gamma$ be the affine 3-space group with IT number 163 in Table 1B of \cite{B-Z}. 
Then $\Gamma = \langle t_1,t_2,t_3, A,\beta,C\rangle$ 
where $t_i = e_i+I$ for $i=1,2,3$ are the standard translations, 
and $\beta=\frac{1}{2}e_3+B$, and 
$$A = \left(\begin{array}{rrr} 0 & -1 & 0\\ 1 & -1 & 0 \\ 0 & 0 & 1  \end{array}\right),\ \ 
B = \left(\begin{array}{rrr} 0 & -1 & 0  \\ -1 & 0 & 0   \\ 0 & 0 & -1 \end{array}\right), \ \ 
C = \left(\begin{array}{rrr} -1 & 0 & 0  \\ 0 & -1 & 0   \\ 0 & 0 & -1 \end{array}\right).$$
The group $\Nu = \langle t_1,t_2, A\rangle$ is a complete normal subgroup 
of $\Gamma$ with $V = {\rm Span}(\Nu) = {\rm Span}\{e_1, e_2\}$. 
The flat orbifold $V/\Nu$ is a $333$ turnover.  
Let $\Kappa = \Nu^\perp$.  Then $\Kappa = \langle t_3\rangle$. 
The flat orbifold $V^\perp/\Kappa$ is a circle.  
The structure group $\Gamma/\Nu\Kappa$ is a dihedral group of order 4 
generated by $\Nu\Kappa\beta$ and $\Nu\Kappa C$. 
The elements $\Nu\Kappa\beta$ and $\Nu\Kappa C$ act on the circle $V^\perp/\Kappa$ as reflections. 
The elements  $\Nu\Kappa\beta$ and $\Nu\Kappa C$ both fix the cone point 
of $V/\Nu$ represented by $(0,0,0)$.  
The other two cone points of $V/\Nu$ are represented by 
$(2/3,1/3,0)$, which is the fixed point of $t_1A$, and by $(1/3,2/3,0)$,  
which is the fixed point of $t_1t_2A$. 
The element $\Nu\Kappa\beta$ acts as the central reflection of $V/\Nu$, 
since it fixes all three cone points.  
The element $\Nu\Kappa C$ acts as the halfturn around the cone point represented by $(0,0,0)$, 
since $\Nu\Kappa C$ preserves the orientation of $V/\Nu$ 
because $C$ preserves the orientation of $V$. 
By Theorems 8 and 12 of \cite{R-T-Co}, the classifying pair for the co-Seifert fibration determined by 
$(\Gamma, \Nu)$ is \{c-ref., 2-rot.\}. 

\section{Generic Fiber $4{\ast}2$ (cone) with IT number 12} 

The 2-space group $\Mu$ with IT number 12 is $4{\ast}2$ in Conway's notation or $p4g$ in IT notation. 
See space group 3/2/1/2 in Table 1A of \cite{B-Z} for the standard affine representation of $\Mu$. 
The flat orbifold $E^2/\Mu$ is a cone with one $90^\circ$ cone point and one $90^\circ$ corner point 
obtained by glueing together two congruent $45^\circ-45^\circ$ right triangles along two sides opposite 
$45^\circ$ and $90^\circ$ angles.  
Let c-ref.\ denote the  {\it central reflection} between the triangles. 

\begin{lemma}\label{L6} 
If $\Mu$ is the 2-space group $4\ast 2$, 
then $\mathrm{Sym}(\Mu) = \mathrm{Aff}(\Mu) =$ \{\rm idt., c-ref.\}, 
and $\Omega:  \mathrm{Aff}(\Mu) \to \mathrm{Out}(\Mu)$ is an isomorphism. 
\end{lemma}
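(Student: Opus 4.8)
The plan is to follow exactly the template established in the proofs of Lemmas \ref{L1}--\ref{L5}, since the statement for $4{\ast}2$ has the identical form to those for $632$, $3{\ast}3$ and $4{\ast}2$'s cousins among the cone-type orbifolds. The three assertions to establish are: (i) $\mathrm{Sym}(\Mu) = \{\text{idt.}, \text{c-ref.}\}$; (ii) $\mathrm{Sym}(\Mu) = \mathrm{Aff}(\Mu)$; and (iii) $\Omega\colon \mathrm{Aff}(\Mu) \to \mathrm{Out}(\Mu)$ is an isomorphism.

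First I would pin down $\mathrm{Sym}(\Mu) = \mathrm{Isom}(E^2/\Mu)$. The orbifold $E^2/\Mu$ is the cone described in the section header: a sphere-with-boundary object having one $90^\circ$ cone point and one $90^\circ$ corner point (and a mirror boundary arc). Any isometry of this orbifold must permute the set of singular points, but the cone point and the corner point are of different types (one is interior, one lies on the mirror locus), so each is individually fixed. An isometry fixing both the cone point and the corner point is then either the identity or the reflection swapping the two defining $45^\circ$--$45^\circ$ right triangles, i.e.\ the central reflection c-ref.; hence $\mathrm{Sym}(\Mu) = \{\text{idt.}, \text{c-ref.}\}$. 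This is the geometric input, parallel to the one-line arguments in Lemmas \ref{L2} and \ref{L3}.

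For the remaining two claims I would invoke the same machinery cited in Lemma \ref{L1}: by Lemma 5 of \cite{R-T-I} the center $Z(\Mu)$ is trivial (a routine check from the standard affine representation of space group $3/2/1/2$ in Table 1A of \cite{B-Z}, or one simply quotes that $4{\ast}2$ has trivial center), so by Theorems 1 and 2 of \cite{R-T-I} the map $\Omega\colon \mathrm{Sym}(\Mu) \to \mathrm{Out}_E(\Mu)$ is an isomorphism; then $\mathrm{Out}_E(\Mu) = \mathrm{Out}(\Mu)$ by Lemma 9 of \cite{R-T-I} together with Table 5A of \cite{B-Z}, and Theorems 1 and 3 of \cite{R-T-I} give that $\Omega\colon \mathrm{Aff}(\Mu) \to \mathrm{Out}(\Mu)$ is an isomorphism; comparing the two isomorphisms forces $\mathrm{Sym}(\Mu) = \mathrm{Aff}(\Mu)$. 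Concretely the proof will just read ``\textit{We have that $\mathrm{Sym}(\Mu) = \mathrm{Aff}(\Mu)$ and $\Omega\colon \mathrm{Aff}(\Mu) \to \mathrm{Out}(\Mu)$ is an isomorphism as in Lemma \ref{L1}.}''

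The only place where anything could go wrong is step (i): one must be certain that the cone point and the corner point cannot be interchanged by a symmetry. Here the safeguard is that they are genuinely different kinds of singular point — the $90^\circ$ cone point is an isolated interior orbifold point while the $90^\circ$ corner point sits at the end of a reflector edge — so no isometry can carry one to the other, and the cone angle/corner angle bookkeeping does not even need to be invoked. Everything else is a citation. I therefore expect the proof to be two sentences long, mirroring Lemma \ref{L6}'s predecessors verbatim.
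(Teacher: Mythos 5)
Your proposal matches the paper's proof essentially verbatim: the paper argues that a symmetry of $E^2/\Mu$ must fix the cone point and the corner point (hence is idt.\ or c-ref.), and then cites the same chain of results from \cite{R-T-I} "as in Lemma \ref{L1}" to get $\mathrm{Sym}(\Mu) = \mathrm{Aff}(\Mu)$ and that $\Omega$ is an isomorphism. Your added justification that the cone point and corner point cannot be interchanged (interior singular point versus endpoint of a reflector edge) is the correct reason behind the paper's one-line geometric claim.
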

\begin{proof} $\mathrm{Sym}(\Mu)$ = \{\rm idt., c-ref.\}, 
since a symmetry of $E^2/\Mu$ fixes the cone point and the corner point. 
We have that $\mathrm{Sym}(\Mu) = \mathrm{Aff}(\Mu)$ and $\Omega:  \mathrm{Aff}(\Mu) \to \mathrm{Out}(\Mu)$ is an isomorphism as in Lemma \ref{L1}. 
\end{proof}

\begin{table}  
\begin{tabular}{llllll}
no. & fibers & grp. & quotients &  group action & classifying pair \\
\hline 
100 & $(4\!\ast\! 2, \mathrm{O})$   & $C_1$ & $(4{\ast}2, \mathrm{O})$ & (idt., idt.) & \{idt., idt.\} \\
108 & $(4\!\ast\! 2, \mathrm{O})$   & $C_2$ & $(\ast 442, \mathrm{O})$ & (c-ref., 2-rot.) & \{c-ref., c-ref.\} \\
125 & $(4\!\ast\! 2, \mathrm{O})$  & $C_2$ & $(\ast 442, \mathrm{I})$ & (c-ref., ref.) & \{c-ref., c-ref.\}  \\
127 & $(4\!\ast\! 2, \mathrm{I})$    & $C_1$ & $(4{\ast}2, \mathrm{I})$ & (idt., idt.) & \{idt., idt.\} \\
140 & $(4\!\ast\! 2, \mathrm{I})$    & $D_1$ & $(\ast 442, \mathrm{I})$ & (c-ref., ref.) & \{idt., c-ref.\} 
\end{tabular}

\medskip
\caption{The classification of the co-Seifert fibrations of 3-space groups 
whose generic fiber is of type $4\hbox{$\ast$}2$ with IT number 12}\label{T6}
\end{table}

The derivation of Table \ref{T6} for fiber of type $4\hbox{$\ast$}2$ is the same as the derivation of Table \ref{T3}  
for fiber of type $3\!\ast\! 3$. 

\section{Generic Fiber $\ast 442$ ($45^\circ - 45^\circ$ right triangle) with IT number 11} 

The 2-space group $\Mu$ with IT number 11 is $\ast 442$ in Conway's notation or $p4m$ in IT notation. 
See space group 3/2/1/1 in Table 1A of \cite{B-Z} for the standard affine representation of $\Mu$. 
The flat orbifold $E^2/\Mu$ is a $45^\circ - 45^\circ$ right triangle.  
Let t-ref.\ denote the {\it triangle reflection} of $E^2/\Mu$. 

\begin{lemma}\label{L7} 
If $\Mu$ is the 2-space group $\ast 442$, 
then $\mathrm{Sym}(\Mu) = \mathrm{Aff}(\Mu) =$ \{\rm idt., t-ref.\}, 
and $\Omega:  \mathrm{Aff}(\Mu) \to \mathrm{Out}(\Mu)$ is an isomorphism.  
\end{lemma}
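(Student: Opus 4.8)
The plan is to follow the exact template already used for Lemmas \ref{L1}--\ref{L6}, since the orbifold $E^2/\Mu$ for $\ast 442$ is a $45^\circ$--$45^\circ$ right triangle, a rigid polygon whose isometry group is forced by the combinatorics of its corner points. First I would compute $\mathrm{Sym}(\Mu) = \mathrm{Isom}(E^2/\Mu)$ directly: the triangle has three corner points with pairwise distinct angles $45^\circ$, $45^\circ$, $90^\circ$, wait---two of the angles are equal, so an isometry could in principle swap the two $45^\circ$ corners. The $90^\circ$ corner is the unique corner of its angle, so it is fixed by every symmetry; the reflection t-ref.\ in the perpendicular bisector of the hypotenuse swaps the two $45^\circ$ corners, and any symmetry either fixes or swaps that pair. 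Hence $\mathrm{Sym}(\Mu) = \{\mathrm{idt.}, \mathrm{t\text{-}ref.}\}$, a group of order $2$. This is the one genuine point of care relative to Lemma \ref{L1}, where all three corner angles were distinct and the symmetry group was trivial; here I must note that the only nontrivial symmetry is the hypotenuse reflection.

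Next I would establish $Z(\Mu) = \{I\}$ by citing Lemma 5 of \cite{R-T-I} (the relevant 2-space groups have trivial center), so that by Theorems 1 and 2 of \cite{R-T-I} the natural map $\Omega: \mathrm{Sym}(\Mu) \to \mathrm{Out}_E(\Mu)$ is an isomorphism onto the subgroup of Euclidean-realizable outer automorphisms. Then, exactly as in Lemma \ref{L1}, I would invoke Lemma 9 of \cite{R-T-I} together with Table 5A of \cite{B-Z} to conclude $\mathrm{Out}_E(\Mu) = \mathrm{Out}(\Mu)$, i.e.\ every outer automorphism of $p4m$ is geometrically realizable. Finally, Theorems 1 and 3 of \cite{R-T-I} give that $\Omega: \mathrm{Aff}(\Mu) \to \mathrm{Out}(\Mu)$ is an isomorphism; combining the two isomorphisms and the inclusion $\mathrm{Sym}(\Mu) \subseteq \mathrm{Aff}(\Mu)$ forces $\mathrm{Sym}(\Mu) = \mathrm{Aff}(\Mu)$, and this common group is $\{\mathrm{idt.}, \mathrm{t\text{-}ref.}\}$.

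The main obstacle, such as it is, is purely bookkeeping: verifying from Table 5A of \cite{B-Z} that $\mathrm{Out}(p4m)$ does indeed have order $2$ (so that the inclusion $\mathrm{Sym}(\Mu) \hookrightarrow \mathrm{Aff}(\Mu) \cong \mathrm{Out}(\Mu)$ is forced to be onto) rather than something larger that would leave room for affinities that are not isometries. Once that table lookup is confirmed, the proof is a verbatim application of the argument in Lemma \ref{L1}, so I would write it in one or two sentences: ``$\mathrm{Sym}(\Mu) = \{\mathrm{idt.}, \mathrm{t\text{-}ref.}\}$, since a symmetry of $E^2/\Mu$ fixes the right-angle corner point and either fixes or transposes the two $45^\circ$ corner points. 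We have that $\mathrm{Sym}(\Mu) = \mathrm{Aff}(\Mu)$ and $\Omega: \mathrm{Aff}(\Mu) \to \mathrm{Out}(\Mu)$ is an isomorphism as in Lemma \ref{L1}.''
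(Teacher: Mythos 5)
Your proposal is correct and follows essentially the same route as the paper: the paper's proof likewise determines $\mathrm{Sym}(\Mu) = \{\mathrm{idt.}, \mathrm{t\text{-}ref.}\}$ by noting that a symmetry fixes the $90^\circ$ corner and permutes the two $45^\circ$ corners, and then invokes the Lemma \ref{L1} argument (trivial center via Lemma 5 of \cite{R-T-I}, Theorems 1--3 and Lemma 9 of \cite{R-T-I} with Table 5A of \cite{B-Z}) to get $\mathrm{Sym}(\Mu) = \mathrm{Aff}(\Mu)$ and the isomorphism $\Omega$. Your spelled-out citation chain is exactly what the paper's phrase ``as in Lemma \ref{L1}'' abbreviates.
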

\begin{proof} $\mathrm{Sym}(\Mu)$ = \{\rm idt., t-ref.\}, 
since a symmetry of $E^2/\Mu$ fixes the $90^\circ$ corner point 
and permute the other two corner points. 
We have that $\mathrm{Sym}(\Mu) = \mathrm{Aff}(\Mu)$ and $\Omega:  \mathrm{Aff}(\Mu) \to \mathrm{Out}(\Mu)$ is an isomorphism as in Lemma \ref{L1}. 
\end{proof}

\begin{table}  
\begin{tabular}{rlllll}
no. & fibers & grp. & quotients &  group action & classifying pair \\
\hline 
  99 & $(\ast 442, \mathrm{O})$ & $C_1$ & $(\ast 442, \mathrm{O})$ & (idt., idt.) &  \{idt., idt.\} \\
107 & $(\ast 442, \mathrm{O})$ & $C_2$ & $(\ast 442, \mathrm{O})$ & (t-ref., 2-rot.) & \{t-ref., t-ref.\} \\
123 & $(\ast 442, \mathrm{I})$   & $C_1$ & $(\ast 442, \mathrm{I})$ & (idt., idt.) &  \{idt., idt.\} \\
129 & $(\ast 442, \mathrm{O})$ & $C_2$ & $(\ast 442, \mathrm{I})$ & (t-ref., ref.) & \{t-ref., t-ref.\} \\
139 & $(\ast 442, \mathrm{I})$   & $D_1$ & $(\ast 442, \mathrm{I})$ & (t-ref., ref.) & \{idt., t-ref.\}
\end{tabular}

\medskip
\caption{The classification of the co-Seifert fibrations of 3-space groups 
whose generic fiber is of type $\ast 442$ with IT number 11}\label{T7}
\end{table}

We represent $\mathrm{Out}(\Mu)$ by $\mathrm{Sym}(\Mu)$ via the isomorphism 
$\Omega:  \mathrm{Sym}(\Mu) \to \mathrm{Out}(\Mu)$. 
The set $\mathrm{Isom}(C_\infty, \Mu)$ consists of two elements corresponding 
to the pairs of inverse elements \{idt., idt.\} and \{t-ref., t-ref.\} of $\mathrm{Sym}(\Mu)$ 
by Theorem 7 of \cite{R-T-Co}. 
The corresponding affine equivalence classes of co-Seifert fibrations correspond 
to the first two rows of Table \ref{T7}. 

The set $\mathrm{Isom}(D_\infty, \Mu)$ consists of three elements corresponding 
to the pairs of elements \{idt., idt.\}, \{t-ref., t-ref.\}, \{idt., t-ref.\} of order 1 or 2 of $\mathrm{Sym}(\Mu)$ 
by Theorem 8 of \cite{R-T-Co}. 
The corresponding affine equivalence class of co-Seifert fibrations corresponds  
to the last three rows of Table \ref{T7}. 

\section{Generic Fiber $442$ (turnover) with IT number 10} 

The 2-space group $\Mu$ with IT number 10 is $442$ in Conway's notation or $p4$ in IT notation. 
See space group 3/1/1/1 in Table 1A of \cite{B-Z} for the standard affine representation of $\Mu$. 
The  flat orbifold $E^2/\Mu$ is a turnover with 3 cone points 
obtained by gluing together two congruent $45^\circ-45^\circ$ right triangles along their boundaries. 
The $442$ turnover is orientable. 
The symmetry group of this orbifold is a dihedral group of order 4 
consisting of the identity symmetry, the {\it halfturn}  2-rot., 
the {\it central reflection} c-ref.\,\,between the two triangles, and 
the {\it triangle reflection} t-ref. 

\begin{lemma}\label{L8} 
If $\Mu$ is the 2-space group $442$, 
then  $\mathrm{Sym}(\Mu)$ is the dihedral group \{\rm idt., 2-rot, c-ref., t-ref.\}, 
and $\mathrm{Sym}(\Mu) = \mathrm{Aff}(\Mu)$, 
and $\Omega:  \mathrm{Aff}(\Mu) \to \mathrm{Out}(\Mu)$ is an isomorphism.  
\end{lemma}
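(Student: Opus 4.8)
The plan is to follow exactly the template established in Lemmas~\ref{L1}--\ref{L7}, since the 2-space group $442$ ($p4$) is handled by the same machinery. First I would establish the symmetry group: $E^2/\Mu$ is the $442$ turnover, obtained by doubling a $45^\circ$--$45^\circ$ right triangle across its boundary. An isometry of the turnover must permute the three cone points; the two cone points of order $4$ (the images of the two $45^\circ$ corners) and the single cone point of order $2$ (the image of the right-angle corner) cannot be interchanged, so a symmetry fixes the order-$2$ cone point and either fixes or swaps the two order-$4$ cone points. One checks that this gives exactly the four isometries listed: the identity, the halfturn $2$-rot.\ about the order-$2$ cone point, the central reflection c-ref.\ swapping the two triangular faces, and the triangle reflection t-ref.\ in the perpendicular bisector of the hypotenuse; together these form the dihedral group $\{\text{idt.}, 2\text{-rot.}, \text{c-ref.}, \text{t-ref.}\}$ of order $4$, with $2\text{-rot.} = (\text{c-ref.})(\text{t-ref.})$.

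Next I would invoke the same chain of cited results as in Lemma~\ref{L1}. Since $E^2/\Mu$ is a closed orbifold, $\Omega\colon \mathrm{Sym}(\Mu)\to \mathrm{Out}_E(\Mu)$ is an isomorphism by Theorems~1 and 2 of \cite{R-T-I} (using that $Z(\Mu)=\{I\}$, which holds for $p4$ by Lemma~5 of \cite{R-T-I}). Then $\mathrm{Out}_E(\Mu)=\mathrm{Out}(\Mu)$ by Lemma~9 of \cite{R-T-I} together with the entry for space group $3/1/1/1$ in Table~5A of \cite{B-Z}, and $\Omega\colon \mathrm{Aff}(\Mu)\to \mathrm{Out}(\Mu)$ is an isomorphism by Theorems~1 and 3 of \cite{R-T-I}. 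Comparing the two isomorphisms gives $\mathrm{Sym}(\Mu)=\mathrm{Aff}(\Mu)$, which is what the lemma asserts.

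The only step requiring any genuine thought is the identification of $\mathrm{Sym}(\Mu)$ as a group of order exactly $4$: one must confirm both that each of the four claimed maps really is an isometry of the flat orbifold (clear from the doubled-triangle picture and the symmetries of the isoceles right triangle) and that no further isometries exist. The upper bound follows from the cone-point combinatorics noted above — once the two order-$4$ cone points are either fixed or swapped, the isometry of the turnover is determined up to the reflection c-ref.\ that exchanges the two faces, yielding at most $2\times 2 = 4$ isometries. I expect this combinatorial bookkeeping to be the main (and only mild) obstacle; everything else is a verbatim application of the cited theorems, exactly as in the preceding lemmas, so the proof can be written in one or two sentences deferring to Lemma~\ref{L1}.
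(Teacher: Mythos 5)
Your proposal is correct and follows the paper's own argument essentially verbatim: the paper also pins down $\mathrm{Sym}(\Mu)$ by noting that a symmetry must fix the $180^\circ$ cone point and permute the two $90^\circ$ cone points, and then deduces $\mathrm{Sym}(\Mu)=\mathrm{Aff}(\Mu)$ and that $\Omega$ is an isomorphism by the same chain of results from \cite{R-T-I} and Table 5A of \cite{B-Z} used in Lemma~\ref{L1}. Your extra bookkeeping (the $2\times 2$ upper bound and the relation 2-rot.\ = (c-ref.)(t-ref.)) just makes explicit what the paper leaves implicit.
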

\begin{proof} $\mathrm{Sym}(\Mu)$ = \{\rm idt., 2-rot, c-ref., t-ref.\}, 
since a symmetry of $E^2/\Mu$ fixes the $180^\circ$ cone point and permutes  
the other two cone points. 
We have that $\mathrm{Sym}(\Mu) = \mathrm{Aff}(\Mu)$ and $\Omega:  \mathrm{Aff}(\Mu) \to \mathrm{Out}(\Mu)$ is an isomorphism as in Lemma \ref{L1}. 
\end{proof}

\begin{table} 
\begin{tabular}{rlllll}
no. & fibers & grp. & quotients &  structure group action & classifying pair \\
\hline 
75 & $(442, \mathrm{O})$ & $C_1$ & $(442, \mathrm{O})$ & (idt., idt.) & \{idt., idt.\} \\
79 & $(442, \mathrm{O})$ & $C_2$ & $(442, \mathrm{O})$ & (2-rot., 2-rot.) & \{2-rot., 2-rot.\} \\
83 & $(442, \mathrm{I})$  & $C_1$ & $(442, \mathrm{I})$ & (idt., idt.) & \{idt., idt.\}  \\
85 & $(442, \mathrm{O})$& $C_2$ & $(442, \mathrm{I})$ & (2-rot., ref.) & \{2-rot., 2-rot.\} \\
87 & $(442, \mathrm{I})$  & $D_1$ & $(442, \mathrm{I})$ & (2-rot., ref.) &  \{idt., 2-rot.\} \\
89 & $(442, \mathrm{O})$ & $C_2$ & $(\ast 442, \mathrm{I})$ & (c-ref., ref.) & \{c-ref., c-ref.\} \\
90 & $(442, \mathrm{O})$ & $C_2$ & $(4{\ast}2, \mathrm{I})$ & (t-ref., ref.) & \{t-ref., t-ref.\} \\
97 & $(442, \mathrm{O})$ & $D_2$ & $(\ast 442, \mathrm{I})$ & (c-ref., ref.), (2-rot., 2-rot.) & \{c-ref., t-ref.\} \\
103 & $(442, \mathrm{O})$& $C_2$ & $(\ast 442, \mathrm{O})$ & (c-ref., 2-rot.) & \{c-ref., c-ref.\} \\
104 & $(442, \mathrm{O})$& $C_2$ & $(4{\ast}2, \mathrm{O})$ & (t-ref., 2-rot.) & \{t-ref., t-ref.\} \\
124 & $(442, \mathrm{I})$  & $D_1$ & $(\ast 442, \mathrm{I})$ & (c-ref., ref.) &  \{idt., c-ref.\} \\
126 & $(442, \mathrm{O})$ & $D_2$ & $(\ast 442, \mathrm{I})$ & (c-ref., ref.), (t-ref., 2-rot.) & \{c-ref., 2-rot.\} \\
128 & $(442, \mathrm{I})$   & $D_1$ & $(4{\ast}2, \mathrm{I})$ & (t-ref., ref.) & \{idt., t-ref.\} \\
130 & $(442, \mathrm{O})$ & $D_2$ & $(\ast 442, \mathrm{I})$ & (t-ref., ref.), (c-ref., 2-rot.) & \{t-ref., 2-rot.\}
\end{tabular}

\medskip
\caption{The classification of the co-Seifert fibrations of 3-space groups 
whose generic fiber is of type $442$ with IT number 10}\label{T8}
\end{table}

We represent $\mathrm{Out}(\Mu)$ by $\mathrm{Sym}(\Mu)$ via the isomorphism 
$\Omega:  \mathrm{Sym}(\Mu) \to \mathrm{Out}(\Mu)$. 
The set $\mathrm{Isom}(C_\infty, \Mu)$ consists of four elements corresponding 
to the pairs of inverse elements \{idt., idt.\}, \{2-rot., 2-rot.\}, \{c-ref., c-ref.\}, \{t-ref., t-ref.\} of $\mathrm{Sym}(\Mu)$ 
by Theorem 7 of \cite{R-T-Co}. 

The set $\mathrm{Isom}(D_\infty, \Mu)$ consists of ten elements corresponding 
to the pairs of elements \{idt., idt.\}, \{2-rot., 2-rot.\}, \{idt., 2-rot.\}, \{c-ref., c-ref.\}, \{t-ref., t-ref.\}, \{c-ref., t-ref.\}, 
\{idt., c-ref.\}, \{c-ref., 2-rot.\}, \{idt., t-ref.\}, \{t-ref., 2-rot.\} of order 1 or 2 of $\mathrm{Sym}(\Mu)$ 
by Theorem 8 of \cite{R-T-Co}. 

\section{Generic Fiber $2{\ast}22$ (bonnet) with IT number 9} 

The 2-space group $\Mu$ with IT number 9 is $2{\ast}22$ in Conway's notation or $cmm$ in IT notation. 
See space group 2/2/2/1 in Table 1A of \cite{B-Z} for the standard affine representation of $\Mu$. 
The flat orbifold $E^2/\Mu$ is a {\it bonnet}. 
The most symmetric bonnet is the {\it square bonnet} 
obtained by gluing together two congruent squares along the union of two adjacent sides. 
This orbifold has one $180^\circ$ cone point and two $90^\circ$ corner points. 
The symmetry group of this orbifold is a dihedral group of order 4 
consisting of the identity symmetry, the {\it central reflection} c-ref.\ between the two squares, 
the {\it diagonal reflection} d-ref., and the {\it halfturn} 2-rot.
Both c-ref.\ and  2-rot.\ transpose the two corner points of the square bonnet, 
whereas  d-ref.\ fixes each of the corner points of the square bonnet. 

\begin{lemma}\label{L9} 
If $\Mu$ is the 2-space group $2{\ast}22$ and $E^2/\Mu$ is a square bonnet, 
then  $\mathrm{Sym}(\Mu)$ is the dihedral group \{\rm idt., 2-rot, c-ref., d-ref.\}, 
and $\mathrm{Sym}(\Mu) = \mathrm{Aff}(\Mu)$, 
and $\Omega:  \mathrm{Aff}(\Mu) \to \mathrm{Out}(\Mu)$ is an isomorphism. 
\end{lemma}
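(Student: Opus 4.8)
The plan is to follow the same template established in Lemmas \ref{L1}--\ref{L8}, specializing the general machinery to the square bonnet. First I would verify that $\mathrm{Sym}(\Mu)$ is exactly the dihedral group $\langle$idt., 2-rot., c-ref., d-ref.$\rangle$ of order 4. The orbifold $E^2/\Mu$ has a distinguished singular set: one $180^\circ$ cone point and two $90^\circ$ corner points. Any isometry of $E^2/\Mu$ must fix the cone point (it is the unique point of its type) and must permute the two corner points. This gives a homomorphism $\mathrm{Sym}(\Mu) \to S_2$, whose image is all of $S_2$ (realized by c-ref.) and whose kernel consists of isometries fixing all three distinguished points; the kernel has order at most $2$ since an isometry of a surface fixing three non-collinear points is the identity or a reflection across the geodesic through suitable points. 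Exhibiting the four named symmetries and checking they are distinct then pins down $\mathrm{Sym}(\Mu)$ precisely. Here one should note that the claim is stated for the \emph{square} bonnet; for a generic (non-square) bonnet the diagonal reflection need not be an isometry, which is why the hypothesis is included.

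Next I would establish $\mathrm{Sym}(\Mu) = \mathrm{Aff}(\Mu)$ and that $\Omega\colon \mathrm{Aff}(\Mu)\to\mathrm{Out}(\Mu)$ is an isomorphism, exactly as in Lemma \ref{L1}. Following that template: by Lemma 5 of \cite{R-T-I} one checks $Z(\Mu)=\{I\}$ for $cmm$, so by Theorems 1 and 2 of \cite{R-T-I} the map $\Omega\colon\mathrm{Sym}(\Mu)\to\mathrm{Out}_E(\Mu)$ is an isomorphism; then by Lemma 9 of \cite{R-T-I} together with the relevant entry (Table 5A of \cite{B-Z}) one gets $\mathrm{Out}_E(\Mu)=\mathrm{Out}(\Mu)$; and by Theorems 1 and 3 of \cite{R-T-I}, $\Omega\colon\mathrm{Aff}(\Mu)\to\mathrm{Out}(\Mu)$ is an isomorphism. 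Composing, $\mathrm{Sym}(\Mu)$ and $\mathrm{Aff}(\Mu)$ have the same image under $\Omega$, and since $\mathrm{Sym}(\Mu)\subseteq\mathrm{Aff}(\Mu)$ with $\Omega$ injective on the latter, equality follows.

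The step I expect to require the most care is the computation of $Z(\Mu)$ and the verification that $\mathrm{Out}_E(\Mu)=\mathrm{Out}(\Mu)$, since unlike the earlier lemmas the point group of $cmm$ is $D_2$ rather than containing a $3$- or $6$-fold rotation, so one must confirm via the cited tables that no ``extra'' outer automorphisms arise from sign changes outside the Euclidean-realizable ones. Modulo that bookkeeping, the proof is identical in structure to Lemma \ref{L1}, and in the write-up I would simply record the square-bonnet geometric fact about the singular points and then invoke ``as in Lemma \ref{L1}'' for the algebraic conclusions.
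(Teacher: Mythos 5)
Your proposal matches the paper's proof: the paper likewise pins down $\mathrm{Sym}(\Mu)$ by observing that a symmetry of the square bonnet must fix the cone point and permute the two corner points, and then obtains $\mathrm{Sym}(\Mu)=\mathrm{Aff}(\Mu)$ and the isomorphism $\Omega:\mathrm{Aff}(\Mu)\to\mathrm{Out}(\Mu)$ ``as in Lemma \ref{L1}'', i.e.\ via $Z(\Mu)=\{I\}$ and the cited results of \cite{R-T-I} and Table 5A of \cite{B-Z}, exactly as you outline. Your extra remarks (the $S_2$ image/kernel bookkeeping and the role of the square hypothesis) are correct elaborations of the same argument, not a different route.
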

\begin{proof} $\mathrm{Sym}(\Mu)$ = \{\rm idt., 2-rot, c-ref., d-ref.\}, 
since a symmetry of $E^2/\Mu$ fixes the cone point 
and permutes the corner points. 
We have that $\mathrm{Sym}(\Mu) = \mathrm{Aff}(\Mu)$ and $\Omega:  \mathrm{Aff}(\Mu) \to \mathrm{Out}(\Mu)$ is an isomorphism as in Lemma \ref{L1}. 
\end{proof}

\begin{table} 
\begin{tabular}{rlllll}
no. & fibers & grp. & quotients &  structure group action & classifying pair \\
\hline 
35 & $(2{\ast}22, \mathrm{O})$ & $C_1$ & $(2{\ast}22, \mathrm{O})$ & (idt., idt.) & \{idt., idt.\} \\
42 & $(2{\ast}22, \mathrm{O})$ & $C_2$ & $(\ast 2222, \mathrm{O})$ & (c-ref., 2-rot.) & \{c-ref., c-ref.\} \\
65 & $(2{\ast}22, \mathrm{I})$  & $C_1$ & $(2{\ast}22, \mathrm{I})$ & (idt., idt.) & \{idt., idt.\}  \\
67 & $(2{\ast}22, \mathrm{O})$ & $C_2$ & $(\ast 2222, \mathrm{I})$ & (c-ref., ref.) & \{c-ref., c-ref.\} \\
69 & $(2{\ast}22, \mathrm{I})$   & $D_1$ & $(\ast 2222, \mathrm{I})$ &  (c-ref., ref.) & \{idt., c-ref.\} \\
101 & $(2{\ast}22, \mathrm{O})$ & $C_2$ & $(\ast 442, \mathrm{O})$ & (d-ref., 2-rot.) & \{d-ref., d-ref.\} \\
102 & $(2{\ast}22, \mathrm{O})$ & $C_2$ & $(4{\ast}2, \mathrm{O})$ & (2-rot., 2-rot.) & \{2-rot., 2-rot.\} \\
111 & $(2{\ast}22, \mathrm{O})$ & $C_2$ & $(\ast 442, \mathrm{I})$ & (d-ref., ref.) & \{d-ref., d-ref.\} \\
113 & $(2{\ast}22, \mathrm{O})$ & $C_2$ & $(4{\ast}2, \mathrm{I})$ & (2-rot., ref.) & \{2-rot., 2-rot.\} \\
121 & $(2{\ast}22, \mathrm{O})$ & $D_2$ & $(\ast 442, \mathrm{I})$ & (d-ref., ref.), \{c-ref., 2-rot.\} & \{d-ref., 2-rot.\} \\
132 & $(2{\ast}22, \mathrm{I})$  & $D_1$ & $(\ast 442, \mathrm{I})$ & (d-ref., ref.) &  \{idt., d-ref.\} \\
134 & $(2{\ast}22, \mathrm{O})$ & $D_2$ & $(\ast 442, \mathrm{I})$ & (c-ref., ref.), (2-rot., 2-rot.) & \{c-ref., d-ref.\} \\
136 & $(2{\ast}22, \mathrm{I})$  & $D_1$ & $(4{\ast}2, \mathrm{I})$ & (2-rot., ref.) & \{idt., 2-rot.\} \\
138 & $(2{\ast}22, \mathrm{O})$ & $D_2$ & $(\ast 442, \mathrm{I})$ & (c-ref., ref.), (d-ref., 2-rot.) &  \{c-ref., 2-rot.\}
\end{tabular}

\medskip
\caption{The classification of the co-Seifert fibrations of 3-space groups 
whose generic fiber is of type $2{\ast}22$ with IT number 9}\label{T9}
\end{table}

The derivation of Table \ref{T9} for fiber of type $2{\ast}22$ is similar to the derivation of Table \ref{T8} for fiber of type $442$. 

\section{Generic Fiber $22\times$ (projective pillow) with IT number 8} 

The 2-space group $\Mu$ with IT number 8 is $22\times$ in Conway's notation or $pgg$ in IT notation. 
See space group 2/2/1/3 in Table 1A of \cite{B-Z} for the standard affine representation of $\Mu$. 
The flat orbifold $E^2/\Mu$ is a {\it projective pillow}. 
The most symmetric projective pillow is the {\it square projective pillow} obtained by gluing the opposite sides 
of a square $\Box$ by glide reflections with axes the lines joining the midpoints of opposite sides of $\Box$. 
This orbifold has two $180^\circ$ cone points represented by diagonally opposite vertices of $\Box$, 
and a {\it center point} represented by the center of $\Box$. 
The symmetry group of this orbifold is a dihedral group of order 8, 
represented by the symmetry group of $\Box$, 
 consisting of the identity symmetry, two {\it midline reflections}, m-ref. and m-ref.$'$, 
two {\it diagonal reflections}, d-ref.\ and d-ref.$'$, a {\it halfturn} 2-rot., 
and two order 4 rotations, 4-rot.\ and 4-rot.$^{-1}$, with 4-rot.\ = (d-ref.)(m-ref.).  
There are three conjugacy classes of order 2 symmetries, the classes represented by m-ref., 2-rot., and d-ref. 
There is one conjugacy class of order 4 symmetries. 
There are two conjugacy classes of dihedral subgroups of order 4, 
the group generated by the midline reflections, 
and the group generated by the diagonal reflections. 

\begin{lemma}\label{L10} 
If $\Mu$ is the 2-space group $22\times$ and $E^2/\Mu$ is a square projective pillow, 
then $\mathrm{Sym}(\Mu)$ is the dihedral group $\langle$\rm m-ref., d-ref.$\rangle$ of order 8, 
and $\mathrm{Sym}(\Mu) = \mathrm{Aff}(\Mu)$, 
and $\Omega:  \mathrm{Aff}(\Mu) \to \mathrm{Out}(\Mu)$ is an isomorphism.  
\end{lemma}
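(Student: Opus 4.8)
The plan is to follow exactly the template established by Lemma~\ref{L1} and reused verbatim in Lemmas~\ref{L2}--\ref{L9}, adapting only the input about which symmetries can occur. First I would pin down $\mathrm{Sym}(\Mu)$ geometrically: the square projective pillow $E^2/\Mu$ has two $180^\circ$ cone points and a distinguished center point, and an isometry must send cone points to cone points and fix the center point, so $\mathrm{Sym}(\Mu)$ is a subgroup of the isometry group of the underlying square $\Box$ fixing its center, i.e.\ of the dihedral group of order~$8$; conversely every element of the symmetry group of $\Box$ descends to the quotient because the gluing glide-reflections are equivariant under $\mathrm{Sym}(\Box)$. Hence $\mathrm{Sym}(\Mu) = \langle\text{m-ref., d-ref.}\rangle$ of order~$8$, matching the description in the paragraph preceding the lemma. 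This is the one step with genuine geometric content, though it is routine given the explicit description of the orbifold.

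Next I would invoke the machinery of \cite{R-T-I} exactly as in the earlier lemmas. The key facts needed are: (i) $Z(\Mu) = \{I\}$, which should follow from Lemma~5 of \cite{R-T-I} (the center of a $2$-space group is trivial except for the few well-known toral and Klein-bottle cases, which $22\times$ is not); (ii) therefore $\Omega: \mathrm{Sym}(\Mu) \to \mathrm{Out}_E(\Mu)$ is an isomorphism by Theorems~1 and~2 of \cite{R-T-I}; (iii) $\mathrm{Out}_E(\Mu) = \mathrm{Out}(\Mu)$ by Lemma~9 of \cite{R-T-I} together with Table~5A of \cite{B-Z} (one checks in that table that for $pgg$ no outer automorphism acts by an orientation-reversing change of the point group lattice type that falls outside $\mathrm{Out}_E$); and (iv) $\Omega: \mathrm{Aff}(\Mu) \to \mathrm{Out}(\Mu)$ is an isomorphism by Theorems~1 and~3 of \cite{R-T-I}. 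Chaining these, $\mathrm{Sym}(\Mu) \cong \mathrm{Out}(\Mu) \cong \mathrm{Aff}(\Mu)$, and since $\mathrm{Sym}(\Mu) \subseteq \mathrm{Aff}(\Mu)$ always holds and both are finite of the same order, $\mathrm{Sym}(\Mu) = \mathrm{Aff}(\Mu)$.

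I expect the main obstacle to be step~(iii), verifying $\mathrm{Out}_E(\Mu) = \mathrm{Out}(\Mu)$ from Table~5A of \cite{B-Z}: for the more symmetric $2$-space groups the full outer automorphism group can be strictly larger than the ``Euclidean'' outer automorphisms realized by symmetries, and one must confirm from the table that $pgg$ is not such a case. All the other steps are formal applications of cited theorems, identical in structure to Lemma~\ref{L1}, so the write-up can be as terse as the earlier proofs: assert $\mathrm{Sym}(\Mu) = \langle\text{m-ref., d-ref.}\rangle$ since a symmetry permutes the cone points and fixes the center point, then say ``we have that $\mathrm{Sym}(\Mu) = \mathrm{Aff}(\Mu)$ and $\Omega: \mathrm{Aff}(\Mu) \to \mathrm{Out}(\Mu)$ is an isomorphism as in Lemma~\ref{L1}.'' One caveat worth double-checking before committing to that phrasing: whether the generic (non-square) projective pillow has a smaller symmetry group, so that the hypothesis ``$E^2/\Mu$ is a square projective pillow'' is genuinely needed — it is, since a non-square projective pillow loses the order-$4$ rotation, and the lemma is correctly stated with that hypothesis, exactly parallel to Lemma~\ref{L9} for the square bonnet.
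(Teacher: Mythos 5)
Your proposal is correct and follows essentially the same route as the paper: the paper determines $\mathrm{Sym}(\Mu) = \langle$m-ref., d-ref.$\rangle$ from the fact that a symmetry fixes the center point and permutes the cone points, and then obtains $\mathrm{Sym}(\Mu) = \mathrm{Aff}(\Mu)$ and the isomorphism $\Omega: \mathrm{Aff}(\Mu) \to \mathrm{Out}(\Mu)$ exactly ``as in Lemma~\ref{L1}'' (i.e.\ via $Z(\Mu)=\{I\}$, Theorems~1--3 and Lemma~9 of \cite{R-T-I}, and Table~5A of \cite{B-Z}), which is precisely the chain of citations you spell out. Your additional cautions about verifying $\mathrm{Out}_E(\Mu)=\mathrm{Out}(\Mu)$ and the necessity of the square hypothesis are sound but do not change the argument.
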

\begin{proof} $\mathrm{Sym}(\Mu)$ = $\langle$\rm m-ref., d-ref.$\rangle$, 
since a symmetry of $E^2/\Mu$ fixes the center point and permutes the cone points. 
We have that $\mathrm{Sym}(\Mu) = \mathrm{Aff}(\Mu)$ and $\Omega:  \mathrm{Aff}(\Mu) \to \mathrm{Out}(\Mu)$ is an isomorphism as in Lemma \ref{L1}. 
\end{proof}

\medskip
\begin{table}  
\begin{tabular}{rlllll}
no. & fibers & grp. & quotients &  structure group action & classifying pair \\
\hline 
  32 & $(22\times, \mathrm{O})$ & $C_1$ & $(22\times, \mathrm{O})$ & (idt., idt.) & \{idt., idt.\} \\
  41 & $(22\times, \mathrm{O})$ & $C_2$ & $(22\ast, \mathrm{O})$ & (m-ref., 2-rot.) &  \{m-ref., m-ref.\} \\
  45 & $(22\times, \mathrm{O})$& $C_2$ & $(2{\ast}22, \mathrm{O})$ & (2-rot., 2-rot.) & \{2-rot., 2-rot.\} \\
  50 & $(22\times, \mathrm{O})$ & $C_2$ & $(2{\ast}22, \mathrm{I})$ & (2-rot., ref.) & \{2-rot., 2-rot.\} \\
  54 & $(22\times, \mathrm{O})$ & $C_2$ & $(22\ast, \mathrm{I})$ & (m-ref., ref.)  &  \{m-ref., m-ref.\} \\
  55 & $(22\times, \mathrm{I})$  & $C_1$ & $(22\times, \mathrm{I})$ & (idt., idt.) & \{idt., idt.\}  \\
  64 & $(22\times, \mathrm{I})$  & $D_1$ & $(22\ast, \mathrm{I})$ & (m-ref., ref.)& \{idt., m-ref.\} \\
  68 & $(22\times, \mathrm{O})$ & $D_2$ & $(\ast 2222, \mathrm{I})$ & (m-ref., ref.), (m-ref.$'$, 2-rot.) & \{m-ref., 2-rot.\} \\
  72 & $(22\times, \mathrm{I})$   & $D_1$ & $(2{\ast}22, \mathrm{I})$ & (2-rot., ref.) &  \{idt., 2-rot.\} \\
  73 & $(22\times, \mathrm{O})$ & $D_2$ & $(\ast 2222, \mathrm{I})$ & (m-ref., ref.), (2-rot., 2-rot.) & \{m-ref., m-ref.$'$\} \\
106 & $(22\times, \mathrm{O})$  & $C_2$ & $(4{\ast}2, \mathrm{O})$ & (d-ref., 2-rot.) & \{d-ref., d-ref.\} \\
110 & $(22\times, \mathrm{O})$  & $C_4$ & $(4{\ast}2, \mathrm{O})$ & (4-rot., 4-rot.) & \{4-rot., 4-rot.$^{-1}$\} \\
117 & $(22\times, \mathrm{O})$  & $C_2$ & $(4{\ast}2, \mathrm{I})$ & (d-ref., ref.) & \{d-ref., d-ref.\} \\
120 & $(22\times, \mathrm{O})$ & $D_2$ & $(\ast 442, \mathrm{I})$ & (d-ref., ref.), (2-rot., 2-rot.) & \{d-ref., d-ref.$'$\} \\
133 & $(22\times, \mathrm{O})$ & $D_2$ & $(\ast 442, \mathrm{I})$ & (d-ref., ref.), (d-ref.$'$, 2-rot.) &  \{d-ref., 2-rot.\} \\
135 & $(22\times, \mathrm{I})$   & $D_1$ & $(4{\ast}2, \mathrm{I})$ & (d-ref., ref.) & \{idt., d-ref.\} \\
142 & $(22\times, \mathrm{O})$ & $D_4$ & $(\ast 442, \mathrm{I})$ & (d-ref., ref.), (4-rot., 4-rot.) & \{d-ref., m-ref.\}
\end{tabular}

\medskip
\caption{The classification of the co-Seifert fibrations of 3-space groups 
whose generic fiber is of type $22\times$ with IT number 8}\label{T10}
\end{table}

We represent $\mathrm{Out}(\Mu)$ by $\mathrm{Sym}(\Mu)$ via the isomorphism 
$\Omega:  \mathrm{Sym}(\Mu) \to \mathrm{Out}(\Mu)$. 
The set $\mathrm{Isom}(C_\infty, \Mu)$ consists of five elements corresponding 
to the pairs of inverse elements \{idt., idt.\}, \{m-ref., m-ref.\}, \{2-rot., 2-rot.\}, \{d-ref., d-ref.\},  \{4-rot., 4-rot.$^{-1}$\} 
of $\mathrm{Sym}(\Mu)$ by Theorem 7 of \cite{R-T-Co}. 

The set $\mathrm{Isom}(D_\infty, \Mu)$ consists of twelve elements corresponding 
to the pairs of elements \{2-rot., 2-rot.\},  \{m-ref., m-ref.\}, \{idt., idt.\}, \{idt., m-ref.\},  \{m-ref., 2-rot.\}, \{idt., 2-rot.\}, 
\{m-ref., m-ref.$'$\}, \{d-ref., d-ref.\}, \{d-ref., d-ref.$'$\},  \{d-ref., 2-rot.\}, \{idt., d-ref.\}, \{d-ref., m-ref.\}  
of order 1 or 2 of $\mathrm{Sym}(\Mu)$ by Theorem 8 of \cite{R-T-Co}. 

\section{Generic Fiber $22\ast$ (pillowcase) with IT number 7} 

The 2-space group $\Mu$ with IT number 7 is $22\ast$ in Conway's notation or $pmg$ in IT notation. 
See space group 2/2/1/2 in Table 1A of \cite{B-Z} for the standard affine representation of $\Mu$. 
The flat orbifold $E^2/\Mu$ is a {\it pillowcase}  
obtained by gluing together two congruent rectangles along the union of three of their sides. 
This orbifold has two $180^\circ$ cone points. 
The symmetry group of this orbifold is a dihedral group of order 4 
consisting of the identity symmetry, the {\it central reflection} c-ref.\ between the two rectangles, 
the {\it halfturn} 2-rot.,  and the {\it midline reflection} m-ref.

\begin{lemma}\label{L11} 
If $\Mu$ is the 2-space group $22\ast$, 
then $\mathrm{Sym}(\Mu)$ is the dihedral group \{\rm idt., c-ref., m-ref., 2-rot.\}, 
and $\mathrm{Sym}(\Mu) = \mathrm{Aff}(\Mu)$,  
and $\Omega:  \mathrm{Aff}(\Mu) \to \mathrm{Out}(\Mu)$ is an isomorphism.  
\end{lemma}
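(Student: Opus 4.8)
The plan is to follow exactly the template established in Lemmas~\ref{L1}--\ref{L10}, which reduces the statement to three sub-claims: (i) identifying $\mathrm{Sym}(\Mu)$ concretely as the dihedral group of order $4$ generated by c-ref.\ and m-ref.; (ii) upgrading this to $\mathrm{Sym}(\Mu) = \mathrm{Aff}(\Mu)$; and (iii) showing $\Omega\colon \mathrm{Aff}(\Mu)\to\mathrm{Out}(\Mu)$ is an isomorphism. For (i), I would argue geometrically: the pillowcase $E^2/\Mu$ is built from two congruent rectangles glued along three sides, so it has two $180^\circ$ cone points and a natural reflection (midline) and a central reflection; an isometry must permute the two cone points, and the subgroup fixing both is generated by the midline reflection while the central reflection transposes them (or whichever bookkeeping matches the rectangle's symmetries). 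Counting, the full symmetry group is $\langle\text{c-ref., m-ref.}\rangle$, dihedral of order $4$, with $2\text{-rot.} = (\text{c-ref.})(\text{m-ref.})$. Since a generic (non-square) rectangle has no extra symmetries, this is the whole group; I should note the pillowcase here is the generic one, so no exceptional larger group arises.

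For (ii) and (iii), I would invoke the same machinery cited in Lemma~\ref{L1}: compute $Z(\Mu)$ via Lemma~5 of \cite{R-T-I} (for $pmg$ the point group is $\{I, -I, \text{diag}(1,-1), \text{diag}(-1,1)\}$-type, so one checks the center is trivial or at worst $\{I\}$), apply Theorems~1 and~2 of \cite{R-T-I} to conclude $\Omega\colon\mathrm{Sym}(\Mu)\to\mathrm{Out}_E(\Mu)$ is an isomorphism, then use Lemma~9 of \cite{R-T-I} together with Table~5A of \cite{B-Z} to get $\mathrm{Out}_E(\Mu) = \mathrm{Out}(\Mu)$, and Theorems~1 and~3 of \cite{R-T-I} to get $\Omega\colon\mathrm{Aff}(\Mu)\to\mathrm{Out}(\Mu)$ an isomorphism. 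Comparing the two isomorphisms forces $\mathrm{Sym}(\Mu) = \mathrm{Aff}(\Mu)$. This is the "as in Lemma~\ref{L1}" step and should be essentially automatic once the relevant table entries are read off.

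The main obstacle is really step (i): being careful about which of c-ref., m-ref., 2-rot.\ fixes versus transposes the cone points, and confirming that a \emph{generic} pillowcase has no accidental symmetries beyond the order-$4$ dihedral group (one must rule out, e.g., a reflection swapping the two rectangles combined with a side-swap that would only exist for a square). Because $pmg$ has a rectangular (not square) lattice as its generic realization, this is fine, but the proof should state it rather than leave it implicit. Everything else is a direct transcription of the argument pattern already used seven times in the paper, so I would write the proof in one or two sentences mirroring Lemma~\ref{L9}'s proof: \emph{"$\mathrm{Sym}(\Mu) = \{\mathrm{idt.}, \mathrm{c\text{-}ref.}, \mathrm{m\text{-}ref.}, 2\text{-}\mathrm{rot.}\}$, since a symmetry of $E^2/\Mu$ permutes the cone points; we have $\mathrm{Sym}(\Mu) = \mathrm{Aff}(\Mu)$ and $\Omega$ an isomorphism as in Lemma~\ref{L1}."}
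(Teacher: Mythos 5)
Your proposal matches the paper's proof: the paper's argument is exactly the one-line version you give at the end ($\mathrm{Sym}(\Mu) = \{$idt., c-ref., m-ref., 2-rot.$\}$ since a symmetry permutes the cone points, then ``as in Lemma \ref{L1}'' for $\mathrm{Sym}(\Mu)=\mathrm{Aff}(\Mu)$ and the isomorphism $\Omega$), and your extra care about the generic rectangle and the trivial center $Z(\Mu)=\{I\}$ is consistent with, though not spelled out in, the paper. No gaps; same approach.
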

\begin{proof} $\mathrm{Sym}(\Mu)$ = \{\rm idt., c-ref., m-ref., 2-rot\}, 
since a symmetry of $E^2/\Mu$ permutes the cone points. 
We have that $\mathrm{Sym}(\Mu) = \mathrm{Aff}(\Mu)$ and $\Omega:  \mathrm{Aff}(\Mu) \to \mathrm{Out}(\Mu)$ is an isomorphism as in Lemma \ref{L1}. 
\end{proof}

\begin{table} 
\begin{tabular}{llllll}
no. & fibers & grp. & quotients &  structure group action & classifying pair \\
\hline 
28 & $(22\ast, \mathrm{O})$ & $C_1$ & $(22\ast, \mathrm{O})$ & (idt., idt.) & \{idt., idt.\} \\
39 & $(22\ast, \mathrm{O})$ & $C_2$ & $(\ast 2222, \mathrm{O})$ & (c-ref., 2-rot.) & \{c-ref., c-ref.\} \\
40 & $(22\ast, \mathrm{O})$ & $C_2$ & $(22\ast, \mathrm{O})$ & (2-rot., 2-rot.) & \{2-rot., 2-rot.\} \\
46 & $(22\ast, \mathrm{O})$ & $C_2$ & $(2{\ast}22, \mathrm{O})$ & (m-ref., 2-rot.) & \{m-ref., m-ref.\} \\
49 & $(22\ast, \mathrm{O})$ & $C_2$ & $(\ast 2222, \mathrm{I})$ & (c-ref., ref.) & \{c-ref., c-ref.\} \\
51 & $(22\ast, \mathrm{I})$   & $C_1$ & $(22\ast, \mathrm{I})$ & (idt., idt.) & \{idt., idt.\}  \\ 
53 & $(22\ast, \mathrm{O})$ & $C_2$ & $(2{\ast}22, \mathrm{I})$ & (m-ref., ref.) & \{m-ref., m-ref.\} \\
57 & $(22\ast, \mathrm{O})$ & $C_2$ & $(22\ast, \mathrm{I})$ & (2-rot., ref.) & \{2-rot., 2-rot.\}  \\
63 & $(22\ast, \mathrm{I})$   & $D_1$ & $(22\ast, \mathrm{I})$ & (2-rot., ref.) & \{idt., 2-rot.\}\\
64 & $(22\ast, \mathrm{O})$ & $D_2$ & $(\ast 2222, \mathrm{I})$ & (m-ref., ref.), (c-ref., 2-rot.) & \{m-ref., 2-rot.\}\\
66 & $(22\ast, \mathrm{O})$ & $D_2$ & $(\ast 2222, \mathrm{I})$ & (c-ref., ref.), (2-rot., 2-rot.) & \{c-ref., m-ref.\} \\
67 & $(22\ast, \mathrm{I})$   & $D_1$ & $(\ast 2222, \mathrm{I})$ & (c-ref., ref.) & \{idt., c-ref.\} \\
72 & $(22\ast, \mathrm{O})$ & $D_2$ & $(\ast 2222, \mathrm{I})$ & (c-ref., ref.), (m-ref., 2-rot.) &  \{c-ref., 2-rot.\} \\ 
74 & $(22\ast, \mathrm{I})$   & $D_1$ & $(2{\ast}22, \mathrm{I})$ & (m-ref., ref.) & \{idt., m-ref.\} 
\end{tabular}

\medskip
\caption{The classification of the co-Seifert fibrations of 3-space groups 
whose generic fiber is of type $22\ast$ with IT number 7}\label{T11}
\end{table}

The derivation of Table \ref{T11} for fiber of type $22\ast$ is similar to the derivation of Table \ref{T8} for fiber of type $442$. 

\section{Generic Fiber $\ast 2222$ (rectangle) with IT number 6} 

The 2-space group $\Mu$ with IT number 6 is $\ast 2222$ in Conway's notation or $pmm$ in IT notation. 
See space group 2/2/1/1 in Table 1A of \cite{B-Z} for the standard affine representation of $\Mu$. 
The flat orbifold $E^2/\Mu$ is a rectangle. 
A rectangle has four $90^\circ$ corner points. 
The  most symmetric rectangle is a square $\Box$. 
The symmetry group of $\Box$ is a dihedral group of order 8 
consisting of the identity symmetry, two {\it midline reflections}, m-ref.\ and m-ref.$'$, 
two {\it diagonal reflections},  d-ref.\ and d-ref.$'$, a {\it halfturn} 2-rot., 
and two order 4 {\it rotations}, 4-rot.\,\,and 4-rot.$^{-1}$, with 4-rot.\ = (d-ref.)(m-ref.). 
There are three conjugacy classes of order 2 symmetries, the classes represented by m-ref., 2-rot., and d-ref. 
There is one conjugacy class of order 4 symmetries. 
There are two conjugacy classes of dihedral subgroups of order 4, 
the group generated by the midline reflections, 
and the group generated by the diagonal reflections. 

\begin{lemma}\label{L12} 
If $\Mu$ is the 2-space group $\ast 2222$ and $E^2/\Mu$ is a square, 
then $\mathrm{Sym}(\Mu)$ is the dihedral group $\langle$\rm m-ref., d-ref.$\rangle$ of order 8, 
and $\mathrm{Sym}(\Mu) = \mathrm{Aff}(\Mu)$, 
and $\Omega:  \mathrm{Aff}(\Mu) \to \mathrm{Out}(\Mu)$ is an isomorphism.  
\end{lemma}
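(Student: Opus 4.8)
The plan is to follow verbatim the template established in Lemmas~\ref{L1}--\ref{L11}, since the statement for $\ast 2222$ on a square is structurally identical to the ones already proved; the only genuinely new input is the identification of $\mathrm{Sym}(\Mu)$ as the order-8 dihedral group $\langle\textrm{m-ref.},\textrm{d-ref.}\rangle$. First I would argue that a symmetry of $E^2/\Mu$ must permute the four $90^\circ$ corner points of the square $\Box$ (these are the only singular points, and they are characterized intrinsically as the corner points of the orbifold locus, so every symmetry preserves the set of them). A symmetry of the square that permutes the corners is an isometry of $\Box$, hence lies in the dihedral group of order 8; conversely every element of that dihedral group descends to a symmetry of $E^2/\Mu$. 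This gives $\mathrm{Sym}(\Mu) = \langle\textrm{m-ref.},\textrm{d-ref.}\rangle$ of order 8, exactly as asserted.

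Next I would invoke the general machinery of \cite{R-T-I} in the same chain used in Lemma~\ref{L1}. I would note $Z(\Mu) = \{I\}$ via Lemma~5 of \cite{R-T-I}, so by Theorems~1 and 2 of \cite{R-T-I} the map $\Omega: \mathrm{Sym}(\Mu) \to \mathrm{Out}_E(\Mu)$ is an isomorphism; then $\mathrm{Out}_E(\Mu) = \mathrm{Out}(\Mu)$ by Lemma~9 of \cite{R-T-I} together with Table~5A of \cite{B-Z}; and finally $\Omega: \mathrm{Aff}(\Mu) \to \mathrm{Out}(\Mu)$ is an isomorphism by Theorems~1 and 3 of \cite{R-T-I}. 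Comparing the two isomorphisms onto the common target $\mathrm{Out}(\Mu)$ forces $\mathrm{Sym}(\Mu) = \mathrm{Aff}(\Mu)$, which is the remaining assertion. The whole argument can therefore be compressed into ``as in Lemma~\ref{L1}'' once the corner-point observation is made, exactly as the preceding lemmas do.

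The one point requiring slightly more care than in the earlier lemmas is that here $\mathrm{Sym}(\Mu)$ genuinely depends on the choice of rectangle: a generic (non-square) rectangle has symmetry group of order 4, and only the square $\Box$ realizes the order-8 group. The hypothesis ``$E^2/\Mu$ is a square'' in the statement takes care of this, so the subtlety is really just a matter of recording that we are working with the most symmetric representative in the affine class; the affine classification is then insensitive to this choice because affinities, not isometries, are what ultimately matter for the co-Seifert classification, and $\mathrm{Aff}(\Mu)$ is an invariant of $\Mu$. I expect no real obstacle: the only thing to double-check is that the named generators m-ref., m-ref.$'$, d-ref., d-ref.$'$, 2-rot., 4-rot., 4-rot.$^{-1}$ indeed exhaust $\mathrm{Aff}(\Mu)$ and that no extra affinity (e.g.\ a non-isometric shear) appears — but this is precisely what the equality $\mathrm{Sym}(\Mu) = \mathrm{Aff}(\Mu)$, derived from the \cite{R-T-I} theorems and the order count against $\mathrm{Out}(\Mu)$ from Table~5A of \cite{B-Z}, rules out.
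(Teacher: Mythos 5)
Your proposal is correct and follows essentially the same route as the paper: the paper identifies $\mathrm{Sym}(\Mu)$ as $\langle$m-ref., d-ref.$\rangle$ by observing that a symmetry permutes the corner points, and then establishes $\mathrm{Sym}(\Mu) = \mathrm{Aff}(\Mu)$ and the isomorphism $\Omega$ exactly ``as in Lemma~\ref{L1}'', i.e.\ via $Z(\Mu)=\{I\}$ and the cited results of \cite{R-T-I} and Table~5A of \cite{B-Z}, which is precisely the chain you spell out. Your extra remark about the square versus a generic rectangle is a reasonable aside but not needed, since the hypothesis already fixes the square representative.
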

\begin{proof} $\mathrm{Sym}(\Mu)$ = $\langle$\rm m-ref., d-ref.$\rangle$, 
since a symmetry of $E^2/\Mu$ permutes the corner points. 
We have that $\mathrm{Sym}(\Mu) = \mathrm{Aff}(\Mu)$ and $\Omega:  \mathrm{Aff}(\Mu) \to \mathrm{Out}(\Mu)$ is an isomorphism as in Lemma \ref{L1}. 
\end{proof}

\begin{table}  
\begin{tabular}{rlllll}
no. & fibers & grp. & quotients &  structure group action & classifying pair \\
\hline 
  25 & $(\ast 2222, \mathrm{O})$ & $C_1$ & $(\ast 2222, \mathrm{O})$ & (idt., idt.) & \{idt., idt.\} \\
  38 & $(\ast 2222, \mathrm{O})$ & $C_2$ & $(\ast 2222, \mathrm{O})$ & (m-ref., 2-rot.) & \{m-ref., m-ref.\} \\
  44 & $(\ast 2222, \mathrm{O})$ & $C_2$ & $(2{\ast}22, \mathrm{O})$ & (2-rot., 2-rot.) & \{2-rot., 2-rot.\} \\
  47 & $(\ast 2222, \mathrm{I})$   & $C_1$ & $(\ast 2222, \mathrm{I})$ & (idt., idt.) & \{idt., idt.\} \\
  51 & $(\ast 2222, \mathrm{O})$ & $C_2$ & $(\ast 2222, \mathrm{I})$ & (m-ref., ref.) & \{m-ref., m-ref.\} \\
  59 & $(\ast 2222, \mathrm{O})$ & $C_2$ & $(2{\ast}22, \mathrm{I})$ & (2-rot., ref.) & \{2-rot., 2-rot.\}  \\
  63 & $(\ast 2222, \mathrm{O})$ & $D_2$ & $(\ast 2222, \mathrm{I})$ & (m-ref., ref.),  (m-ref.$'$, 2-rot.) &  \{m-ref., 2-rot.\} \\
  65 & $(\ast 2222, \mathrm{I})$   & $D_1$ & $(\ast 2222, \mathrm{I})$ &  (m-ref., ref.) & \{idt., m-ref.\} \\
  71 & $(\ast 2222, \mathrm{I})$   & $D_1$ & $(2{\ast}22, \mathrm{I})$ & (2-rot., ref.) &  \{idt., 2-rot.\} \\
  74 & $(\ast 2222, \mathrm{O})$ & $D_2$ & $(\ast 2222, \mathrm{I})$ & (m-ref., ref.), (2-rot., 2-rot.) & \{m-ref., m-ref.$'$\} \\
105 & $(\ast 2222, \mathrm{O})$ & $C_2$ & $(\ast 442, \mathrm{O})$ & (d-ref., 2-rot.) & \{d-ref., d-ref.\} \\
109 & $(\ast 2222, \mathrm{O})$ & $C_4$ & $(4{\ast}2, \mathrm{O})$ & (4-rot., 4-rot.) & \{4-rot., 4-rot.$^{-1}$\} \\
115 & $(\ast 2222, \mathrm{O})$ & $C_2$ & $(\ast 442, \mathrm{I})$ & (d-ref., ref.) & \{d-ref., d-ref.\} \\
119 & $(\ast 2222, \mathrm{O})$ & $D_2$ & $(\ast 442, \mathrm{I})$ & (d-ref., ref.), (2-rot., 2-rot.) & \{d-ref., d-ref.$'$\} \\
131 & $(\ast 2222, \mathrm{I})$   & $D_1$ & $(\ast 442, \mathrm{I})$ & (d-ref., ref.) & \{idt., d-ref.\} \\
137 & $(\ast 2222, \mathrm{O})$ & $D_2$ & $(\ast 442, \mathrm{I})$ & (d-ref., ref.), (d-ref.$'$, 2-rot.) & \{d-ref., 2-rot.\} \\
141 & $(\ast 2222, \mathrm{O})$ & $D_4$ & $(\ast 442, \mathrm{I})$ & (d-ref., ref.), (4-rot., 4-rot.) &  \{d-ref., m-ref.\}
\end{tabular}

\medskip
\caption{The classification of the co-Seifert fibrations of 3-space groups 
whose generic fiber is of type $\ast 2222$ with IT number 6}\label{T12}
\end{table}

The derivation of Table \ref{T12} for fiber of type $\ast 2222$ is similar to the derivation of Table \ref{T10} for fiber of type $22\times$. 

\section{Generic Fiber $\ast\times$ (M\"obius band) with IT number 5} 

Let $\Mu = \langle e_1+I, e_2 + I, A \rangle$ where $e_1, e_2$ are the standard basis vectors of $E^2$ 
and $A$ transposes $e_1$ and $e_2$. 
Then $\Mu$ is a 2-space group with IT number 5.  
The Conway notation for $\Mu$ is $\ast\times$ and the IT notation is $cm$. 
The flat orbifold $E^2/\Mu$ is a M\"obius band. 
We have that $Z(\Mu) = \langle e_1+e_2 +I\rangle$. 

\begin{lemma}\label{L13}  
If $\Mu = \langle e_1+I, e_2 + I, A \rangle$, with $A$ transposing $e_1$ and $e_2$, and $N_A(\Mu)$ 
is the normalizer of $\Mu$ in $\mathrm{Aff}(E^2)$, then 
$$N_A(\Mu) = \{b+B: b_1-b_2 \in \integers \ \, \hbox{and}\ \, B \in \langle -I, A\rangle\}.$$
\end{lemma}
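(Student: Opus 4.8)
The plan is to pin down $N_A(\Mu)$ by first constraining the linear part of a normalizing affine transformation and then its translation part, using that $\Mu = \langle e_1+I, e_2+I, A\rangle$ is generated by two translations together with the reflection $A$ that transposes $e_1$ and $e_2$. Writing $g = b+B$ for the affine map $x \mapsto Bx+b$, I would first record the conjugation formula $g(w+W)g^{-1} = \big(Bw + (I - BWB^{-1})b\big) + BWB^{-1}$; in particular $g(v+I)g^{-1} = Bv+I$, and, once $B$ is known to commute with $A$, $gAg^{-1} = (I-A)b + A$. Since $g$ normalizes $\Mu$ if and only if $g\Mu g^{-1}\subseteq\Mu$ and $g^{-1}\Mu g\subseteq\Mu$, both inclusions can be checked on these three generators.

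First I would constrain $B$. The translations in $\Mu$ are exactly the elements of $T(\Mu) = \integers^2$, and conjugation by $g$ permutes them, so $g(e_i+I)g^{-1} = Be_i+I$ forces $B\integers^2 = \integers^2$, i.e.\ $B\in\mathrm{GL}_2(\integers)$. The point group of $\Mu$ is $\{I, A\}$, and applying the linear-part homomorphism turns the normalizing condition into $B\{I,A\}B^{-1} = \{I,A\}$, hence $BAB^{-1} = A$, so $B$ commutes with $A$. Since $A$ has distinct eigenvalues, its centralizer in the $2\times 2$ real matrices is $\{pI + qA : p, q\in\realnos\}$, and such a matrix lies in $\mathrm{GL}_2(\integers)$ exactly when $p, q\in\integers$ and $p^2 - q^2 = \pm1$; the only integer solutions are $(p,q)\in\{(\pm1,0),(0,\pm1)\}$, giving $B\in\{\pm I,\pm A\} = \langle -I, A\rangle$.

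Next I would constrain $b$ and verify sufficiency. With $B\in\langle -I, A\rangle$ the conjugates $g(e_i+I)g^{-1} = Be_i+I$ automatically lie in $\Mu$, while $gAg^{-1} = (I-A)b + A$ with $(I-A)b = (b_1-b_2)(e_1-e_2)$, so $gAg^{-1}\in\Mu$ if and only if $b_1 - b_2\in\integers$. This shows both that $N_A(\Mu)$ is contained in the claimed set and that every such $g$ satisfies $g\Mu g^{-1}\subseteq\Mu$. For the reverse inclusion I would observe that $g^{-1} = -B^{-1}b + B^{-1}$ has linear part $B^{-1}\in\langle -I, A\rangle$ and translation part $-B^{-1}b$ whose two coordinates differ by $\pm(b_1-b_2)\in\integers$ (since $B^{-1}$ fixes or swaps the coordinates up to sign), so the same computation gives $g^{-1}\Mu g\subseteq\Mu$; hence $g$ normalizes $\Mu$ and the two sets coincide.

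I do not expect a genuine obstacle: the only substantive computations are the centralizer-plus-unimodularity argument isolating $\langle -I, A\rangle$ and the evaluation of $(I-A)b$. The point most easily overlooked is that establishing $g\Mu g^{-1}\subseteq\Mu$ does not by itself give $g\in N_A(\Mu)$, so one must also check $g^{-1}\Mu g\subseteq\Mu$; here this is automatic because membership in $\langle -I, A\rangle$ and the condition $b_1-b_2\in\integers$ are both preserved under passing from $g$ to $g^{-1}$.
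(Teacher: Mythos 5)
Your proof is correct and follows essentially the same route as the paper: reduce the normalizer condition to $B$ preserving the translation lattice (so $B\in\mathrm{GL}(2,\integers)$) and $BAB^{-1}=A$, identify the centralizer of $A$ in $\mathrm{GL}(2,\integers)$ as $\langle -I,A\rangle$, and read off $b-Ab=(b_1-b_2)(e_1-e_2)\in\integers^2$ iff $b_1-b_2\in\integers$. The only differences are that you spell out the centralizer/unimodularity computation and the closure of the conditions under inversion, both of which the paper leaves implicit.
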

\begin{proof}
Observe that $b+B \in N_A(\Mu)$ if and only if $B\in N_A(\langle e_1+I, e_2+I\rangle)$ 
and $(b+B)A(b+B)^{-1}\in \Mu$.  
Now $B \in N_A(\langle e_1+I, e_2+I\rangle)$ if and only if $B \in \mathrm{GL}(2,\integers)$.
As $(b+B)A(b+B)^{-1} = b-BAB^{-1}b+BAB^{-1}$, we have that $(b+B)A(b+B)^{-1}\in \Mu$ 
if and only if $BAB^{-1} = A$ and $b-Ab \in \integers^2$. 
Hence $b+B \in N_A(\Mu)$ if and only if $B \in \langle -I, A\rangle$ and $b_1-b_2 \in \integers$. 
\end{proof}

A fundamental polygon for the action of $\Mu$ on $E^2$ is the $45^\circ -45^\circ$ right triangle 
with vertices $v_1=(0,0), v_2=(1,0), v_3 = (1,1)$. 
The reflection $A$ fixes the hypotenuse  $[v_1,v_3]$ pointwise.  
The glide reflection $e_1+A$ maps the side $[v_1,v_2]$ to the side $[v_2,v_3]$. 
The {\it boundary} of the M\"obius band $E^2/\Mu$ is represented by the hypotenuse $[v_1,v_3]$, 
and the {\it central circle} of $E^2/\Mu$ is represented by the line segment $[(1/2,0), (1,1/2)]$ 
joining the midpoints of the two short sides of the triangle.  

Restricting a symmetry of the M\"obius band $E^2/\Mu$ to a symmetry of its boundary circle 
gives an isomorphism from the symmetry group of the M\"obius band to the symmetry group of its boundary circle. 
There are two conjugacy classes of elements of order 2, 
the class of the reflection c-ref.\  = $(e_1/2 +e_2/2+I)_\star$ in the central circle of the M\"obius band, 
and the class of a halfturn 2-rot.\ = $(-I)_\star$ about the  point $\Mu(1/2,0)$ on the central circle. 
Note that c-ref.\ restricts to a halfturn of the boundary circle, and 2-rot.\ restricts to a reflection of the boundary circle. 
Define 2-rot.$'$ = $(e_1/2+e_2/2-I)_\star$. 
There is one conjugacy class of dihedral symmetry groups of order 4, represented by the group \{idt., c-ref., 2-rot., 2-rot.$'$\}.

\begin{lemma}\label{L14} 
If $\Mu= \langle e_1+I, e_2 + I, A \rangle$, with $A$ transposing $e_1$ and $e_2$, 
then the Lie group $\mathrm{Sym}(\Mu)$ is isomorphic to $\mathrm{O}(2)$, 
and $\mathrm{Sym}(\Mu) = \mathrm{Aff}(\Mu)$, 
and $\Omega:  \mathrm{Aff}(\Mu) \to \mathrm{Out}(\Mu)$ maps 
the subgroup {\rm \{idt., 2-rot.\}} isomorphically onto $\mathrm{Out}(\Mu)$.  
\end{lemma}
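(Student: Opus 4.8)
The plan is to exhibit $\mathrm{Sym}(\Mu)$ explicitly as an extension of $\mathrm{O}(2)$ and then to track it through $\Omega$. First I would use the fact, recalled just before the lemma, that restricting a symmetry of the M\"obius band to its boundary circle is an isomorphism $\mathrm{Sym}(\Mu) \to \mathrm{Sym}(\hbox{boundary circle}) = \mathrm{Isom}(S^1) \cong \mathrm{O}(2)$. To see this map is onto, I would observe that the central circle of $E^2/\Mu$ is represented by $[(1/2,0),(1,1/2)]$, parametrize its isometry group directly, and note that the translations along the central circle are realized by the one-parameter family $(t(e_1+e_2)+I)_\star$ (which lies in $N_A(\Mu)/\Mu$ by Lemma \ref{L13}, since $t(e_1+e_2)$ has equal coordinates), while the reflection $2$-rot.\ $= (-I)_\star$ reverses this circle; together these generate a copy of $\mathrm{O}(2)$ surjecting onto $\mathrm{Isom}$ of the boundary circle. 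Hence $\mathrm{Sym}(\Mu) \cong \mathrm{O}(2)$ as a Lie group.

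Next I would establish $\mathrm{Sym}(\Mu) = \mathrm{Aff}(\Mu)$. Since $\mathrm{Aff}(\Mu) \cong N_A(\Mu)/\Mu$ (the affinities of $E^2/\Mu$ are induced by the normalizer, modulo $\Mu$), Lemma \ref{L13} gives $N_A(\Mu) = \{b+B : b_1 - b_2 \in \integers,\ B \in \langle -I, A\rangle\}$. Reducing this modulo $\Mu = \langle e_1+I, e_2+I, A\rangle$: the $A$ factor of $\langle -I,A\rangle$ is absorbed, leaving $\pm I$; the translation part $b$ with $b_1 - b_2 \in \integers$, modulo the lattice, is exactly the diagonal circle $\{t(e_1+e_2)+I : t \in \realnos\}/(e_1+e_2+I)$. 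So $N_A(\Mu)/\Mu$ is precisely the group generated by the central-circle translations and $(-I)_\star$, which is the same $\mathrm{O}(2)$ we just identified inside $\mathrm{Sym}(\Mu)$. Since always $\mathrm{Sym}(\Mu) \subseteq \mathrm{Aff}(\Mu)$, the two coincide.

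Finally, for the statement about $\Omega$: I would invoke Theorems 1 and 3 of \cite{R-T-I} as in Lemma \ref{L1} to get $\Omega : \mathrm{Aff}(\Mu) \to \mathrm{Out}(\Mu)$, with kernel coming from the inner part, i.e.\ from $Z(\Mu)$, which here is $\langle e_1+e_2+I\rangle$. The point is that $Z(\Mu)$ is a full-rank lattice inside the diagonal line $\realnos(e_1+e_2)$, so the connected component $\mathrm{SO}(2) \subset \mathrm{Sym}(\Mu)$ — exactly the image of those central-circle translations — maps to the trivial element of $\mathrm{Out}(\Mu)$; $\Omega$ collapses the whole identity component. Thus $\Omega$ factors through $\mathrm{Sym}(\Mu)/\mathrm{SO}(2) \cong C_2 = \{\hbox{idt.}, \hbox{2-rot.}\}$, and I would then cite Table 5A of \cite{B-Z} (as in Lemma \ref{L1}) to confirm $|\mathrm{Out}(\Mu)| = 2$, forcing the induced map $\{\hbox{idt.}, \hbox{2-rot.}\} \to \mathrm{Out}(\Mu)$ to be an isomorphism. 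The main obstacle I anticipate is the surjectivity/parametrization bookkeeping in the first paragraph — carefully matching the one-parameter family of diagonal translations with rotations of the boundary circle and confirming $(-I)_\star$ and c-ref.\ restrict to the claimed boundary symmetries — rather than anything in the homological second half, which is routine given the cited theorems.
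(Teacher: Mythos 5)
Your proposal is correct and takes essentially the same route as the paper: $\mathrm{Sym}(\Mu)\cong\mathrm{O}(2)$ via restriction to the boundary circle (Lemma 1 of \cite{R-T-I} together with Lemma \ref{L13}), $\mathrm{Sym}(\Mu)=\mathrm{Aff}(\Mu)$ by identifying $N_A(\Mu)/\Mu$ (Lemma 7 of \cite{R-T-I}) with that same $\mathrm{O}(2)$ of diagonal translations and $(-I)_\star$, and $\Omega$ killing the identity component. The only divergence is cosmetic and comes at the end: the paper concludes directly from Theorem 3 of \cite{R-T-I}, which identifies $\ker\Omega$ with the identity component so that 2-rot., acting as a reflection of the boundary circle, must map nontrivially, whereas you instead import $|\mathrm{Out}(\Mu)|=2$ from Table 5A of \cite{B-Z}; both are legitimate.
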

\begin{proof} The Lie group $\mathrm{Sym}(\Mu)$ is isomorphic to $\mathrm{O}(2)$, 
since restricting a symmetry of the M\"obius band $E^2/\Mu$ to a symmetry of its boundary circle $\mathrm{O}$ is 
an isomorphism from $\mathrm{Sym}(\Mu)$ to $\mathrm{Isom}(\mathrm{O})$ by Lemma 1 of \cite{R-T-I} and Lemma \ref{L13}. 
We have that $\mathrm{Sym}(\Mu) = \mathrm{Aff}(\Mu)$ by Lemmas 1 and 7 of \cite{R-T-I} and Lemma \ref{L13}. 
The epimorphism $\Omega:  \mathrm{Aff}(\Mu) \to \mathrm{Out}(\Mu)$ maps  
the group \{idt., 2-rot.\} isomorphically onto $\mathrm{Out}(\Mu)$ by Theorem 3 of \cite{R-T-I}, 
since 2-rot.\ acts as a reflection of $\mathrm{O}$. 
\end{proof}

\begin{table}  
\begin{tabular}{rlllll}
no. & fibers & grp. & quotients &  structure group action &  classifying pair\\
\hline 
  8 & $(\ast\times, \mathrm{O})$ & $C_1$ & $(\ast\times, \mathrm{O})$ & (idt., idt.) & \{idt., idt.\} \\
12 & $(\ast\times, \mathrm{O})$ & $C_2$ & $(2{\ast}22, \mathrm{I})$ & (2-rot., ref.) &  \{2-rot., 2-rot.$\}_\ast$ \\
36 & $(\ast\times, \mathrm{O})$ & $C_2$ & $(2{\ast}22, \mathrm{O})$ & (2-rot., 2-rot.)  & \{2-rot., 2-rot.\} \\
38 & $(\ast\times, \mathrm{I})$   & $C_1$ & $(\ast\times, \mathrm{I})$ & (idt., idt.)  & \{idt., idt.\} \\
39 & $(\ast\times, \mathrm{O})$ & $C_2$ & $(\ast\ast, \mathrm{I})$ & (c-ref., ref.) & \{c-ref., c-ref.\} \\
42 & $(\ast\times, \mathrm{I})$   & $D_1$ & $(\ast\ast, \mathrm{I})$ & (c-ref., ref.) &  \{idt., c-ref.\} \\
63 & $(\ast\times, \mathrm{I})$   & $D_1$ & $(2{\ast}22, \mathrm{I})$ & (2-rot., ref.) &  \{idt., 2-rot.\} \\
64 & $(\ast\times, \mathrm{O})$ & $D_2$ & $(\ast 2222, \mathrm{I})$ & (2-rot., ref.), (2-rot.$'$, 2-rot.) &  \{2-rot., c-ref.\}
\end{tabular}

\medskip
\caption{The classification of the co-Seifert fibrations of 3-space groups 
whose generic fiber is of type $\ast\times$ with IT number 5}\label{T13}
\end{table}

We represent $\mathrm{Out}(\Mu)$ by the subgroup  \{idt., 2-rot.\} of $\mathrm{Sym}(\Mu)$ 
via the epimorphism $\Omega:  \mathrm{Sym}(\Mu) \to \mathrm{Out}(\Mu)$.
The set $\mathrm{Isom}(C_\infty, \Mu)$ consists of two elements corresponding 
to the pairs of inverse elements \{idt., idt.\} and \{2-rot., 2-rot.\} of $\mathrm{Sym}(\Mu)$ 
by Theorem 7 of \cite{R-T-Co}. 

The set $\mathrm{Isom}(D_\infty, \Mu)$ consists of six elements corresponding 
to the pairs of elements \{2-rot., 2-rot.\}, \{idt., idt.\}, \{c-ref., c-ref.\}, \{idt., c-ref.\},  \{idt., 2-rot.\}, \{2-rot., c-ref.\} of order 1 or 2 of $\mathrm{Sym}(\Mu)$ 
by Lemma 8 of \cite{R-T-Co} and Theorems 9 and 10 of \cite{R-T-Co}. 
Notice that if 2-rot.$''$ = $(xe_1+xe_2 -I)_\star$ for some $x \in \realnos$, 
then the pairs \{2-rot., 2-rot.\} and \{2-rot., 2-rot.$''$\} determine the same element of  $\mathrm{Isom}(D_\infty, \Mu)$
by Theorem 9 of \cite{R-T-Co}.
The pairs \{2-rot., c-ref.\} and \{2-rot$'$., c-ref.\} determine the same element of  $\mathrm{Isom}(D_\infty, \Mu)$, 
since they are conjugate. 

\section{Generic Fiber $\times\times$ (Klein bottle) with IT number 4} 

Let $\Mu = \langle e_1+I, e_2+I, e_1/2+A\rangle$ where $A = \mathrm{diag}(1,-1)$. 
Then $\Mu$ is a 2-space group with IT number 4. 
The Conway notation for $\Mu$ is $\times\times$ and the IT notation is $pg$. 
The flat orbifold $E^2/\Mu$ is a Klein bottle. 
We have that $Z(\Mu) = \langle e_1+I \rangle$.

\begin{lemma}\label{L15}  
If $\Mu = \langle e_1+I, e_2 + I, e_1/2+ A \rangle$, with $A = \mathrm{diag}(1,-1)$, 
and $N_A(\Mu)$ is the normalizer of $\Mu$ in $\mathrm{Aff}(E^2)$, then 
$$N_A(\Mu) = \{b+B: 2b_2 \in \integers \ \, \hbox{and}\ \, B \in \langle -I, A\rangle\}.$$
\end{lemma}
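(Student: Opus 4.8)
The plan is to mimic exactly the computation in the proof of Lemma \ref{L13}, replacing the reflection $A$ that transposes $e_1,e_2$ with the glide reflection $e_1/2 + A$ where $A = \diag(1,-1)$. First I would observe that $b+B \in N_A(\Mu)$ if and only if $B$ normalizes the translation lattice $T = \langle e_1+I, e_2+I\rangle$ and $(b+B)(e_1/2+A)(b+B)^{-1} \in \Mu$; since $\Mu = T \rtimes \langle e_1/2+A\rangle$ and conjugation by $b+B$ sends $T$ to $T$ automatically once $B \in \mathrm{GL}(2,\integers)$, the first condition is just $B \in \mathrm{GL}(2,\integers)$. The normalizer condition on the point group must also force $BAB^{-1} = \pm A$ — more precisely, the linear part of $\Mu$ is $\{I, A\}$, so for conjugation to preserve $\Mu$ we need $BAB^{-1} \in \{I,A\}$, hence $BAB^{-1}=A$, and combined with $B\in\mathrm{GL}(2,\integers)$ this gives $B \in \langle -I, A\rangle$ exactly as in Lemma \ref{L13} (the centralizer of $A$ in $\mathrm{GL}(2,\integers)$ is the group of integer diagonal matrices with $\pm 1$ entries).

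Next I would carry out the affine part of the conjugation explicitly. Writing $g = e_1/2 + A$ and using $(b+B)g(b+B)^{-1} = (b - BAB^{-1}b + \tfrac12 Be_1) + BAB^{-1}$, and already knowing $BAB^{-1}=A$, the linear part is $A$ and the translational part is $b - Ab + \tfrac12 Be_1$. For this element to lie in $\Mu$ it must equal $(e_1/2 + A)$ composed with some element of $T$, i.e.\ we need $b - Ab + \tfrac12 Be_1 - \tfrac12 e_1 \in \integers^2$. Since $B \in \langle -I, A\rangle$, in each of the four cases $Be_1 = \pm e_1$; when $Be_1 = e_1$ the condition reads $b - Ab \in \integers^2$, and when $Be_1 = -e_1$ it reads $b - Ab - e_1 \in \integers^2$, which is the same condition because $e_1 \in \integers^2$. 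Now $b - Ab = b - \diag(1,-1)b = (0, 2b_2)$, so the condition $b - Ab \in \integers^2$ is precisely $2b_2 \in \integers$. Combining, $b+B \in N_A(\Mu)$ if and only if $2b_2 \in \integers$ and $B \in \langle -I, A\rangle$, which is the claimed description.

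The main obstacle, such as it is, is bookkeeping: one has to be careful that the computation of the translational part of $(b+B)g(b+B)^{-1}$ is done correctly (it is $b - BAB^{-1}b + \tfrac12 Be_1$, not $b - Ab + \tfrac12 e_1$), and that the half-integer shift $\tfrac12 Be_1$ interacts correctly with the half-integer translational part $\tfrac12 e_1$ of $g$ — but because $Be_1 = \pm e_1 \equiv e_1 \pmod{2\integers}$... actually $Be_1 = \pm e_1$ and $\tfrac12 Be_1 - \tfrac12 e_1 \in \{0, -e_1\} \subset \integers^2$, so these half-integer pieces cancel modulo $\integers^2$ in every case and only the lattice-vector condition on $b - Ab$ survives. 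A secondary point worth stating cleanly is why $BAB^{-1}$ cannot equal $I$: that would force $A = I$, which is false. With those two checks in place the proof is a direct three-line calculation modeled on Lemma \ref{L13}.
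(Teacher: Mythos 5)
Your proof is correct and follows essentially the same route as the paper: reduce to $B\in\mathrm{GL}(2,\integers)$, compute $(b+B)(e_1/2+A)(b+B)^{-1}=b+Be_1/2-BAB^{-1}b+BAB^{-1}$, force $BAB^{-1}=A$ (so $B\in\langle -I,A\rangle$), and extract the condition $b-Ab+Be_1/2-e_1/2\in\integers^2$, i.e.\ $2b_2\in\integers$. Your extra check that $\tfrac12 Be_1-\tfrac12 e_1\in\integers^2$ for all $B\in\langle -I,A\rangle$ is exactly the bookkeeping the paper leaves implicit, so nothing is missing.
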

\begin{proof}
Observe that $b+B \in N_A(\Mu)$ if and only if $B\in N_A(\langle e_1+I, e_2+I\rangle)$ 
and $(b+B)(e_1/2+A)(b+B)^{-1}\in \Mu$.  
Now $B \in N_A(\langle e_1+I, e_2+I\rangle)$ if and only if $B \in \mathrm{GL}(2,\integers)$.
As $(b+B)(e_1/2+A)(b+B)^{-1} = b+Be_1/2-BAB^{-1}b+BAB^{-1}$, we have that $(b+B)(e_1/2+A)(b+B)^{-1}\in \Mu$ 
if and only if $BAB^{-1} = A$ and $b-Ab +Be_1/2-e_1/2\in \integers^2$. 
Hence $b+B \in N_A(\Mu)$ if and only if $B \in \langle -I, A\rangle$ and $2b_2 \in \integers$. 
\end{proof}

A fundamental polygon for the action of $\Mu$ on $E^2$ is the rectangle 
with vertices $v_1 = (0,-1/2), v_2 = (1/2,-1/2), v_3 = (1/2,1/2)$ and $v_4= (0,1/2)$. 
The vertical translation $e_2+I$ maps the side $[v_1,v_2]$ to the side $[v_4,v_3]$. 
The horizontal glide reflection $e_1/2 + A$ maps the side $[v_1,v_4]$ to the side $[v_2,v_3]$. 
The Klein bottle $E^2/\Mu$ has two short horizontal geodesics represented by the line segments 
$[v_1, v_2]$ and $[(0,0),(1/2,0)]$. 
The union of these two short geodesics is invariant under any symmetry of the Klein bottle. 
Therefore the horizontal quarterline geodesic, which we call the {\it central circle}, represented by the union 
of the line segments $[(0,-1/4),(1/2,-1/4)]$ and $[(0,1/4),(1/2,1/4)]$ is invariant under any symmetry of the Klein Bottle. 
The symmetry group of the Klein bottle is the direct product of the subgroup of order 2, generated by 
the reflection in the central circle, and the subgroup consisting of the horizontal translations and the reflections 
in a vertical geodesic.   The latter subgroup restricts to the symmetry group of the central circle.  

There are five conjugacy classes of symmetries of order 2, 
the class of the reflection m-ref.\ = $A_\star = (e_1/2+I)_\star$ in the short horizontal geodesics, 
the class of the vertical halfturn 2-sym.\ = $(e_2/2+I)_\star$, 
the class of the reflection c-ref.\ = $(e_2/2 + A)_\star = (e_1/2+e_2/2+I)_\star$ in the central circle,  
the class of the reflection v-ref.\ = $(-A)_\star = (e_1/2-I)_\star$ in the vertical geodesic represented by $[v_1, v_4]$, 
and the class of the halfturn 2-rot.\ = $(e_2/2-I)_\star$ around a pair of antipodal points on the central circle. 
Define v-ref.$' = (-I)_\star$ and 2-rot.$' = (e_1/2 + e_2/2 - I)_\star$.  

There are five conjugacy classes of dihedral symmetry groups of order 4, 
the classes of the groups \{idt., m-ref., 2-sym., c-ref.\},  \{idt.,  m-ref., v-ref., v-ref.$'$\},  \{idt., m-ref., 2-rot., 2-rot.$'$\}, 
 \{idt.,  2-sym., v-ref., 2-rot.$'$\}, and  \{idt.,  c-ref. v-ref., 2-rot.\}. 

\begin{lemma}\label{L16} 
If $\Mu = \langle e_1+I, e_2 + I, e_1/2+ A \rangle$, with $A = \mathrm{diag}(1,-1)$, 
then the Lie group $\mathrm{Sym}(\Mu)$ is isomorphic to $C_2\times\mathrm{O}(2)$, 
and $\mathrm{Sym}(\Mu) = \mathrm{Aff}(\Mu)$, 
and $\Omega:  \mathrm{Aff}(\Mu) \to \mathrm{Out}(\Mu)$ maps 
the subgroup {\rm \{idt., c-ref., v-ref., 2-rot.\}}  isomorphically onto $\mathrm{Out}(\Mu)$.  
Moreover $\Omega${\rm (2-sym.)} = $\Omega${\rm (c-ref.)}. 
\end{lemma}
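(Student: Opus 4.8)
The plan is to follow exactly the template established by Lemmas \ref{L13}--\ref{L14}, since this is the Klein bottle case paralleling the M\"obius band case. First I would invoke Lemma 1 of \cite{R-T-I} together with Lemma \ref{L15} to get a concrete description of $\mathrm{Sym}(\Mu)$ as a Lie group: the normalizer computation in Lemma \ref{L15} tells us that $N_A(\Mu)/\Mu$ is generated by the continuous family of horizontal translations $te_1+I$ (which are central, hence give one $\mathrm{O}(2)$-type circle factor of symmetries together with... ) plus the discrete elements coming from $B\in\langle -I,A\rangle$ and the half-integer freedom in $b_2$. Tracking which cosets survive modulo $\Mu$, one finds the reflection in the central circle (the order-2 factor $C_2$ generated by c-ref.) commutes with everything, while the horizontal translations together with the vertical reflections v-ref., v-ref.$'$ assemble into a copy of $\mathrm{Isom}(\mathrm{O})\cong\mathrm{O}(2)$ acting on the central circle. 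This gives $\mathrm{Sym}(\Mu)\cong C_2\times\mathrm{O}(2)$, and $\mathrm{Sym}(\Mu)=\mathrm{Aff}(\Mu)$ follows from Lemmas 1 and 7 of \cite{R-T-I} and Lemma \ref{L15}, exactly as in Lemma \ref{L14}.

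Next I would compute $\mathrm{Out}(\Mu)$. Since $Z(\Mu)=\langle e_1+I\rangle$ is the group of horizontal integer translations, Theorem 3 of \cite{R-T-I} says $\Omega:\mathrm{Aff}(\Mu)\to\mathrm{Out}(\Mu)$ has kernel equal to the identity component together with inner automorphisms — concretely, the kernel consists of the connected subgroup of $\mathrm{Sym}(\Mu)$ realized by $Z(\Mu)$, i.e. the horizontal-translation circle. Quotienting $C_2\times\mathrm{O}(2)$ by that circle subgroup of $\mathrm{O}(2)$ leaves $C_2\times C_2$, the Klein four-group. I would then exhibit the subgroup $\{\mathrm{idt.}, \text{c-ref.}, \text{v-ref.}, \text{2-rot.}\}$ as a set of coset representatives: c-ref. generates the $C_2$ direct factor, v-ref. $=(e_1/2-I)_\star$ is a representative of the nontrivial reflection class in $\mathrm{O}(2)/S^1$, and 2-rot. $=(\text{c-ref.})(\text{v-ref.})$ up to the translation freedom. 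One checks this four-element set is closed under composition modulo the kernel (using the conjugacy relations among v-ref., v-ref.$'$, 2-rot., 2-rot.$'$ listed before the lemma), so $\Omega$ restricts to an isomorphism from it onto $\mathrm{Out}(\Mu)$.

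For the final assertion $\Omega(\text{2-sym.})=\Omega(\text{c-ref.})$, I would show that 2-sym. and c-ref. differ by an inner automorphism, equivalently that $(e_2/2+I)_\star$ and $(e_1/2+e_2/2+I)_\star$ represent cosets of $\Mu$ differing by conjugation. Since $(e_1/2+e_2/2+I) = (e_1/2+I)(e_2/2+I)$ and $e_1/2+I$ is not in $\Mu$ but its square $e_1+I$ is in $Z(\Mu)$, the element $e_1/2+I$ acts on $E^2/\Mu$ as the symmetry m-ref., and m-ref. lies in the identity-component-plus-inner kernel of $\Omega$ only if it is inner; in fact m-ref. $=A_\star$ and $A$ differs from the point-group part of the glide $e_1/2+A\in\Mu$, so conjugation by $\Mu e_1/2+A$... — more cleanly, I would just verify directly that c-ref. $\circ$ 2-sym.$^{-1}$ equals m-ref. which is realized by the affinity $(e_1/2+I)_\star$ whose square is the inner automorphism from $e_1+I\in Z(\Mu)$; combined with the structure of $\mathrm{Out}(\Mu)$ already determined, this forces $\Omega(\text{m-ref.})=1$, hence $\Omega(\text{2-sym.})=\Omega(\text{c-ref.})$. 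The main obstacle I anticipate is bookkeeping: correctly identifying which of the many order-2 symmetries (m-ref., 2-sym., c-ref., v-ref., 2-rot.) map into the kernel of $\Omega$ versus onto the three nontrivial elements of the Klein four-group, and doing so consistently with the conjugacy-class and dihedral-subgroup data stated just before the lemma; everything else is a routine specialization of the \cite{R-T-I} machinery already used in Lemma \ref{L14}.
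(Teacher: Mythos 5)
Correct, and essentially the paper's own argument: you use the same decomposition $\mathrm{Sym}(\Mu)=\langle\hbox{c-ref.}\rangle\times\Lambda$ with $\Lambda\cong\mathrm{Isom}(\mathrm{O})$ acting on the central circle (Lemmas 1 and 7 of \cite{R-T-I} together with Lemma \ref{L15}), the same identification of $\ker\Omega$ with the horizontal-translation circle via Theorem 3 of \cite{R-T-I}, and the same factorization 2-sym.\ $=$ (c-ref.)(m-ref.). One small tidy-up: the clean reason that $\Omega(\hbox{m-ref.})=1$ is simply that m-ref.\ $=(e_1/2+I)_\star$ lies on the horizontal-translation circle you already identified as the kernel (indeed $e_1/2+I$ centralizes $\Mu$); the ``its square is inner'' remark would not by itself rule out an order-two image in $\mathrm{Out}(\Mu)$.
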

\begin{proof} Let $\mathrm{O}$ be the central circle of the Klein bottle, 
and let $\Lambda$ be the group of all symmetries of the Klein bottle that leave each horizontal geodesic invariant. 
Restriction induces an isomorphism from $\Lambda$ to $\mathrm{Isom}(\mathrm{O})$ by Lemma 1 of \cite{R-T-I} 
and Lemma \ref{L15}. 
We have that  $\mathrm{Sym}(\Mu) = \langle$c-ref.$\rangle \times \Lambda$, 
since every symmetry of the Klein bottle leaves $\mathrm{O}$ invariant,   
and c-ref.\ commutes with every symmetry in $\Lambda$. 
Therefore $\mathrm{Sym}(\Mu)$ is isomorphic to $C_2\times \mathrm{O}(2)$. 
We have that $\mathrm{Sym}(\Mu) = \mathrm{Aff}(\Mu)$ by Lemmas 1 and 7 of \cite{R-T-I} and Lemma \ref{L15}. 
The epimorphism $\Omega:  \mathrm{Aff}(\Mu) \to \mathrm{Out}(\Mu)$ maps 
{\rm \{idt., c-ref., v-ref., 2-rot.\}} isomorphically onto $\mathrm{Out}(\Mu)$ by Theorem 3 of \cite{R-T-I}, 
since v-ref.\ acts as a reflection of $\mathrm{O}$. 
Moreover $\Omega$(2-sym.) = $\Omega$((c-ref.)(m-ref.)) = $\Omega$(c-ref.). 
\end{proof}

\medskip

\begin{table}  
\begin{tabular}{rlllll}
no. & fibers & grp. & quotients &  structure group action & classifying pair \\
\hline 
     7& $(\times\times, \mathrm{O})$ & $C_1$ & $(\times\times, \mathrm{O})$ & (idt., idt.) &  \{idt., idt.\}  \\
    9 & $(\times\times, \mathrm{O})$ & $C_2$ & $(\times\times, \mathrm{O})$ & (2-sym., 2-rot.) &  \{2-sym., 2-sym.\} \\
  13 & $(\times\times, \mathrm{O})$& $C_2$ & $(22\ast, \mathrm{I})$ & (v-ref., ref.) &  \{v-ref., v-ref.$\}_\ast$ \\
  14 & $(\times\times, \mathrm{O})$ & $C_2$ & $(22\times, \mathrm{I})$ & (2-rot., ref.) & \{2-rot., 2-rot.$\}_\ast$ \\
  15 & $(\times\times, \mathrm{O})$ & $D_2$ & $(22\ast, \mathrm{I})$ &  (v-ref., ref.), (2-sym., 2-rot.) & \{v-ref., 2-rot.$'\}_\ast$ \\
  26 & $(\times\times, \mathrm{I})$  & $C_1$ & $(\times\times, \mathrm{I})$ & (idt., idt.) &  \{idt., idt.\}  \\
  27 & $(\times\times, \mathrm{O})$ & $C_2$ & $(\ast\ast, \mathrm{I})$ & (m-ref., ref.) &  \{m-ref., m-ref.\} \\
  29 & $(\times\times, \mathrm{O})$ & $C_2$ & $(\times\times, \mathrm{I})$ & (2-sym., ref.) &  \{2-sym., 2-sym.\} \\
  29 & $(\times\times, \mathrm{O})$ & $C_2$ & $(22\ast, \mathrm{O})$ & (v-ref., 2-rot.) &  \{v-ref., v-ref.\} \\
  30 & $(\times\times, \mathrm{O})$ & $C_2$ & $(\ast\times, \mathrm{I})$ & (c-ref., ref.) &  \{c-ref., c-ref.\} \\
  33 & $(\times\times, \mathrm{O})$  & $C_2$ & $(22\times, \mathrm{O})$ & (2-rot., 2-rot.) & \{2-rot., 2-rot.\} \\
  36 & $(\times\times, \mathrm{I})$ & $D_1$ & $(\times\times, \mathrm{I})$ & (2-sym., ref.) & \{idt., 2-sym.\} \\
  37 & $(\times\times, \mathrm{O})$   & $D_2$ & $(\ast\ast, \mathrm{I})$ & (m-ref., ref.), (2-sym., 2-rot.) & \{m-ref.,  c-ref.\} \\
  39 & $(\times\times, \mathrm{I})$ & $D_1$ & $(\ast\ast, \mathrm{I})$ & (m-ref., ref.) & \{idt., m-ref.\} \\
  41 & $(\times\times, \mathrm{O})$ & $D_2$ & $(\ast\ast, \mathrm{I})$ & (c-ref., ref.), (m-ref., 2-rot.) & \{c-ref., 2-sym.\} \\
  45 & $(\times\times, \mathrm{O})$   & $D_2$ & $(\ast\ast, \mathrm{I})$ & (m-ref., ref.), (c-ref., 2-rot.) &  \{m-ref.,  2-sym.\} \\
  46 & $(\times\times, \mathrm{I})$ & $D_1$ & $(\ast\times, \mathrm{I})$ & (c-ref., ref.) & \{idt., c-ref.\} \\
  52 & $(\times\times, \mathrm{O})$ & $D_2$ & $(2{\ast}22, \mathrm{I})$ & (c-ref., ref.), (2-rot., 2-rot.) & \{c-ref., v-ref.\} \\
  54 & $(\times\times, \mathrm{O})$  & $D_2$ & $(\ast 2222, \mathrm{I})$ & (m-ref., ref.), (v-ref.$'$, 2-rot.) & \{m-ref.,  v-ref.\} \\
  56 & $(\times\times, \mathrm{O})$ & $D_2$ & $(22\ast, \mathrm{I})$ & (m-ref., ref.),  (2-rot.$'$, 2-rot.) &  \{m-ref., 2-rot.\} \\
  57 & $(\times\times, \mathrm{I})$   & $D_1$ & $(22\ast, \mathrm{I})$ & (v-ref., ref.) & \{idt., v-ref.\} \\
  60 & $(\times\times, \mathrm{O})$ & $D_2$ & $(22\ast, \mathrm{I})$ & (2-sym., ref.), (2-rot.$'$, 2-rot.) & \{2-sym., v-ref.\} \\
  60 & $(\times\times, \mathrm{O})$ & $D_2$ & $(2{\ast}22, \mathrm{I})$ & (c-ref., ref.), (v-ref., 2-rot.) & \{c-ref., 2-rot.\} \\
  61 & $(\times\times, \mathrm{O})$   & $D_2$ & $(22\ast, \mathrm{I})$ & (2-sym., ref.), (v-ref., 2-rot.) & \{2-sym., 2-rot.$'$\} \\
  62 & $(\times\times, \mathrm{I})$ & $D_1$ & $(22\times, \mathrm{I})$ & (2-rot., ref.) & \{idt., 2-rot.\}

\end{tabular}

\medskip
\caption{The classification of the co-Seifert fibrations of 3-space groups 
whose generic fiber is of type $\times\times$ with IT number 4}\label{T14}
\end{table}

We represent $\mathrm{Out}(\Mu)$ by the subgroup  \{idt., c-ref., v-ref., 2-rot.\} of $\mathrm{Sym}(\Mu)$ 
via the epimorphism $\Omega:  \mathrm{Sym}(\Mu) \to \mathrm{Out}(\Mu)$.
The set $\mathrm{Isom}(C_\infty, \Mu)$ consists of four elements corresponding 
to the pairs of inverse elements \{idt., idt.\}, \{c-ref., c-ref.\}, \{v-ref., v-ref.\}, \{2-rot., 2-rot.\} of $\mathrm{Sym}(\Mu)$ 
by Theorem 7 of \cite{R-T-Co}. 
Notice that the pairs \{2-sym., 2-sym.\} and \{c-ref., c-ref.\} determine the same element in $\mathrm{Isom}(C_\infty,\Mu)$, 
since $\Omega$(2-sym.) = $\Omega$(c-ref.).

The set $\mathrm{Isom}(D_\infty, \Mu)$ consists of twenty one elements corresponding 
to the pairs of elements \{v-ref., v-ref.\}, \{2-rot., 2-rot.\},   \{v-ref., 2-rot.$'$\}, \{idt., idt.\}, \{m-ref., m-ref.\}, \{2-sym., 2-sym.\}, 
\{c-ref., c-ref.\},  \{idt., 2-sym.\}, \{m-ref., c-ref.\}, \{idt., m-ref.\}, \{c-ref., 2-sym.\},  \{m-ref., 2-sym.\}, \{idt., c-ref.\},  
\{c-ref., v-ref.\}, \{m-ref., v-ref.\}, \{m-ref., 2-rot.\}, \{idt., v-ref.\}, \{2-sym., v-ref.\}, \{c-ref., 2-rot.\}, \{2-sym., 2-rot.$'$\}, 
\{idt., 2-rot.\} of order 1 or 2 of $\mathrm{Sym}(\Mu)$ 
by Lemma 8 of \cite{R-T-Co} and Theorems 9 and 10 of \cite{R-T-Co}. 

Notice that the pairs \{v-ref., v-ref.\} and \{v-ref., v-ref.$'$\} determine the same element of $\mathrm{Isom}(D_\infty, \Mu)$
by Theorem 9 of \cite{R-T-Co}. The same is true for \{2-rot., 2-rot.\} and \{2-rot., 2-rot.$'$\}, 
and \{v-ref., 2-rot.\} and \{v-ref., 2-rot.$'$\}. 
The pairs \{m-ref., v-ref.\} and \{m-ref., v-ref.$'$\}, and \{m-ref., 2-rot.\} and \{m-ref., 2-rot.$'$\} 
determine the same element of $\mathrm{Isom}(D_\infty, \Mu)$, since they are conjugate. 

\vspace{.15in}
\noindent{\bf Remark 1.}  The Seifert fibrations $({\ast}{:}{\times})$ and  $(2{\bar\ast}2{:}2)$, with IT numbers 9 and 15, respectively, 
in Table 1 of \cite{C-T} were replaced by two different affinely equivalent Seifert fibrations in Table 1 of \cite{R-T}. 
The Seifert fibrations $({\ast}{:}{\times})$ and  $(2{\bar\ast}2{:}2)$ have orthogonally dual co-Seifert fibers of type $\times\times$. 
The structure group actions for these fibrations are (c-ref., 2-rot.) and  (v-ref., ref.),  (c-ref., 2-rot.) respectively. 
The action (c-ref., 2-rot.) corresponds to the pair \{c-ref., c-ref.\}, 
and the action (v-ref., ref.),  (c-ref., 2-rot.) corresponds to the pair \{v-ref., 2-rot.\}. 
Hence, these co-Seifert fibrations are affinely equivalent to the co-Seifert fibrations described in rows 2 and 5 of Table \ref{T14}  respectively, by Theorems 7 and 9 of \cite{R-T-Co} respectively.

\section{Generic Fiber $\ast\ast$ (annulus) with IT number 3} 

Let $\Mu = \langle e_1+I, e_2+I, A\rangle$ where $A = \mathrm{diag}(1,-1)$. 
Then $\Mu$ is a 2-space group with IT number 3. 
The Conway notation for $\Mu$ is $\ast\ast$ and the IT notation is $pm$. 
The flat orbifold $E^2/\Mu$ is an annulus. 
We have that $Z(\Mu) = \langle e_1+I \rangle$. 

\begin{lemma}\label{L17}  
If $\Mu = \langle e_1+I, e_2 + I, A \rangle$, with $A = \mathrm{diag}(1,-1)$, then 
$$N_A(\Mu) = \{b+B: 2b_2 \in \integers \ \, \hbox{and}\ \, B \in \langle -I, A\rangle\}.$$
\end{lemma}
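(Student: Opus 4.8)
The statement for Lemma~\ref{L17} is formally identical to that of Lemma~\ref{L15}, except that the glide reflection $e_1/2 + A$ of the $\times\times$ case has been replaced by the honest reflection $A = \mathrm{diag}(1,-1)$ in the $\ast\ast$ case. So the plan is to follow exactly the same computation as in the proof of Lemma~\ref{L15}, simplified by the absence of the $e_1/2$ translation part. First I would observe that $b+B \in N_A(\Mu)$ if and only if $B \in N_A(\langle e_1+I, e_2+I\rangle)$ and $(b+B)A(b+B)^{-1} \in \Mu$; the first condition is equivalent to $B \in \mathrm{GL}(2,\integers)$ since $B$ must preserve the translation lattice $\integers^2$.

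Next I would compute the conjugate: $(b+B)A(b+B)^{-1} = b - BAB^{-1}b + BAB^{-1}$. For this affine isometry to lie in $\Mu = \langle e_1+I, e_2+I, A\rangle$, its linear part $BAB^{-1}$ must be a point group element of $\Mu$, hence equal to $\pm I$ or $\pm A$; but $BAB^{-1}$ is conjugate to $A$, which has eigenvalues $1, -1$, so $BAB^{-1} \in \{A, -A\}$. I would then need to rule out $BAB^{-1} = -A$: if that held, then $(b+B)A(b+B)^{-1}$ would have linear part $-A$, and $-A + I \notin \Mu$ would have to be checked — actually $-A$ \emph{is} in the point group of $\Mu$, so the translational constraint is what matters. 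Here is the subtlety to handle carefully: $(b+B)A(b+B)^{-1} \in \Mu$ with linear part $-A$ would require $b - BAB^{-1}b = b + Ab \in \integers^2$ together with $-A$ being realized in $\Mu$ with an integer translation, i.e. $-A \in \Mu$, which is false since $\Mu$ contains $A$ but the point group elements $-I, -A$ are not realized (they would force $\Mu$ to be a larger group like $\ast 2222$). So in fact $BAB^{-1} = A$, forcing $B \in \langle -I, A\rangle$ (the centralizer of $A$ in $\mathrm{GL}(2,\integers)$, which one checks directly is exactly the diagonal $\pm I, \pm A$ matrices).

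Finally, with $B \in \langle -I, A\rangle$ fixed and hence $BAB^{-1} = A$, the membership $(b+B)A(b+B)^{-1} \in \Mu$ reduces to the translational condition $b - Ab \in \integers^2$. Since $A = \mathrm{diag}(1,-1)$, we have $b - Ab = (0, 2b_2)$, so this says precisely $2b_2 \in \integers$ (with no constraint on $b_1$). Combining, $b+B \in N_A(\Mu)$ if and only if $B \in \langle -I, A\rangle$ and $2b_2 \in \integers$, which is the claimed description.

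The main obstacle is the bookkeeping step of verifying that $-A$ and $-I$ genuinely fail to occur as linear parts of elements of $\Mu$ — i.e. that the point group of $\Mu = pm$ is exactly $\{I, A\}$ and not larger — so that the conjugation constraint really does pin down $BAB^{-1} = A$ rather than allowing $BAB^{-1} = -A$. This is routine (it is part of the standard structure of the $pm$ space group), but it is the one place where one must invoke the specific presentation rather than just manipulate matrices formally; everything else is the same elementary normalizer computation already carried out for Lemmas~\ref{L13} and~\ref{L15}.
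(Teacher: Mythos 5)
Your proof is correct and takes essentially the same route as the paper's, which likewise reduces membership in $N_A(\Mu)$ to the conditions $B\in\mathrm{GL}(2,\integers)$, $BAB^{-1}=A$, and $b-Ab\in\integers^2$; your explicit exclusion of $BAB^{-1}=-A$ via the point group of $\Mu$ and your computation of the centralizer of $A$ in $\mathrm{GL}(2,\integers)$ as $\langle -I, A\rangle$ merely spell out what the paper asserts in a single line. The only blemish is the mid-proof aside claiming that $-A$ \emph{is} in the point group of $\Mu$, which is false and is contradicted by your own later (correct) statement that the point group is exactly $\{I,A\}$ --- delete that aside and the argument stands as written.
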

\begin{proof}
Observe that $b+B \in N_A(\Mu)$ if and only if $B\in N_A(\langle e_1+I, e_2+I\rangle)$ 
and $(b+B)A(b+B)^{-1}\in \Mu$.  
Now $B \in N_A(\langle e_1+I, e_2+I\rangle)$ if and only if $B \in \mathrm{GL}(2,\integers)$.
As $(b+B)A(b+B)^{-1} = b-BAB^{-1}b+BAB^{-1}$, we have that $(b+B)A(b+B)^{-1}\in \Mu$ 
if and only if $BAB^{-1} = A$ and $b-Ab\in \integers^2$. 
Hence $b+B \in N_A(\Mu)$ if and only if $B \in \langle -I, A\rangle$ and $2b_2 \in \integers$. 
\end{proof}

A fundamental polygon for the action of $\Mu$ on $E^2$ is the rectangle with vertices 
$v_1 = (0,0), v_2 = (1,0), v_3 = (1,1/2)$ and $v_4= (0,1/2)$. 
The reflection $A$ fixes the side $[v_1, v_2]$ pointwise, the reflection $e_2+A$ fixes the side $[v_4, v_3]$ pointwise, 
and the horizontal translation $e_1+I$ maps the side $[v_1, v_4]$ to the side $[v_2, v_3]$. 
The central geodesic of the annulus $E^2/\Mu$, which we call the {\it central circle}, represented by the horizontal 
line segment $[(0,1/4),(1,1/4)]$ is invariant under any symmetry of the annulus. 
The symmetry group of $E^2/\Mu$ is the direct product of the subgroup of order 2, generated by 
the reflection in the central circle, and the subgroup consisting of the horizontal rotations and the reflections 
in a pair of antipodal vertical line segments.  The latter subgroup restricts to the symmetry group of the central circle.  

There are five conjugacy classes of symmetries of order 2, 
the class of the reflection c-ref.\  = $(e_2/2+I)_\star$ in the central circle, 
the class of the horizontal halfturn 2-sym.\ = $(e_1/2+I)_\star$, 
the class of the halfturn glide-reflection g-ref.\ = $(e_1/2+e_2/2+I)_\star$ in the central circle, 
the class of a reflection v-ref.\ = $(-I)_\star$ in a pair of antipodal vertical line segments of the annulus, 
and the class of the halfturn 2-rot.\  = $(e_2/2-I)_\star$ around a pair of antipodal points on the central circle. 
Define v-ref.$' = (e_1/2-I)_\star$ and  2-rot.$' = (e_1/2+e_2/2-I)_\star$.  

There are five conjugacy classes of dihedral symmetry groups of order 4, 
the classes of the groups $\{$idt., c-ref., 2-sym., g-ref.$\}$, $\{$idt.,  c-ref., v-ref., 2-rot.$\}$, $\{$idt., 2-sym., v-ref., v-ref.$'$$\}$, $\{$idt.,  2-sym., 2-rot., 2-rot.$'$$\}$, and 
$\{$idt.,  g-ref. v-ref., 2-rot.$'$$\}$. 

\begin{lemma}\label{L18} 
If $\Mu = \langle e_1+I, e_2 + I, A \rangle$, with $A = \mathrm{diag}(1,-1)$, 
then the Lie group $\mathrm{Sym}(\Mu)$ is isomorphic to $C_2\times\mathrm{O}(2)$, 
and $\mathrm{Sym}(\Mu) = \mathrm{Aff}(\Mu)$, 
and $\Omega:  \mathrm{Aff}(\Mu) \to \mathrm{Out}(\Mu)$ maps 
the subgroup {\rm \{idt., c-ref., v-ref., 2-rot.\}}  isomorphically onto $\mathrm{Out}(\Mu)$.  
\end{lemma}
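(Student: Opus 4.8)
The plan is to follow the proof of Lemma~\ref{L16} for the Klein bottle almost verbatim, with the normalizer computation of Lemma~\ref{L17} taking the place of Lemma~\ref{L15}. First I would let $\mathrm{O}$ be the central circle of the annulus $E^2/\Mu$, represented by the horizontal segment $[(0,1/4),(1,1/4)]$, and let $\Lambda$ be the group of all symmetries of $E^2/\Mu$ that leave each horizontal geodesic invariant; by the discussion preceding the lemma, $\Lambda$ consists of the horizontal rotations together with the reflections in a pair of antipodal vertical line segments, and restriction to $\mathrm{O}$ induces an isomorphism from $\Lambda$ onto $\mathrm{Isom}(\mathrm{O})$ by Lemma~1 of \cite{R-T-I} and Lemma~\ref{L17}. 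In particular $\Lambda$ is a Lie group isomorphic to $\mathrm{O}(2)$.

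Next I would observe that every symmetry of $E^2/\Mu$ leaves $\mathrm{O}$ invariant, while c-ref.\ $= (e_2/2+I)_\star$ fixes $\mathrm{O}$ pointwise and hence commutes with every symmetry in $\Lambda$; moreover the order-$2$ group $\langle$c-ref.$\rangle$ meets $\Lambda$ trivially, since any symmetry acting trivially on $\mathrm{O}$ is either idt.\ or c-ref. It follows that $\mathrm{Sym}(\Mu) = \langle$c-ref.$\rangle \times \Lambda$, which is isomorphic to $C_2 \times \mathrm{O}(2)$. Then $\mathrm{Sym}(\Mu) = \mathrm{Aff}(\Mu)$ by Lemmas~1 and~7 of \cite{R-T-I} and Lemma~\ref{L17}, exactly as in Lemma~\ref{L16}: every element of $N_A(\Mu)$ has linear part in $\langle -I, A\rangle$, which consists of orthogonal matrices, so every affinity of $E^2/\Mu$ is represented by an isometry. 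Finally, since v-ref.\ $= (-I)_\star$ restricts to a reflection of $\mathrm{O}$, Theorem~3 of \cite{R-T-I} shows that $\Omega$ maps the Klein four-group {\rm \{idt., c-ref., v-ref., 2-rot.\}} isomorphically onto $\mathrm{Out}(\Mu)$; as in Lemma~\ref{L16} I would also record that $\Omega$ carries the two remaining conjugacy classes of involutions, 2-sym.\ and g-ref., into this subgroup.

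I expect the one step needing genuine care to be the identification of $\Lambda$ with $\mathrm{O}(2)$, namely that the restriction homomorphism $\Lambda \to \mathrm{Isom}(\mathrm{O})$ is injective (no nontrivial symmetry of the annulus preserving every horizontal geodesic acts trivially on the central circle) and surjective (every rotation and reflection of $\mathrm{O}$ extends over the annulus). This is precisely where the description $N_A(\Mu) = \{b+B : 2b_2 \in \integers\ \hbox{and}\ B \in \langle -I, A\rangle\}$ from Lemma~\ref{L17}, combined with Lemma~1 of \cite{R-T-I}, does the work; once that is in hand, the rest of the argument is the same as for the Klein bottle.
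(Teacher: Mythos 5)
Your proposal is correct and follows essentially the same route as the paper's proof: restriction of the subgroup $\Lambda$ to the central circle gives $\mathrm{O}(2)$ via Lemma 1 of \cite{R-T-I} and Lemma \ref{L17}, the direct product decomposition $\mathrm{Sym}(\Mu) = \langle\hbox{c-ref.}\rangle \times \Lambda$, equality $\mathrm{Sym}(\Mu) = \mathrm{Aff}(\Mu)$ from Lemmas 1 and 7 of \cite{R-T-I}, and Theorem 3 of \cite{R-T-I} for the last assertion since v-ref.\ restricts to a reflection of the central circle. The extra details you supply (injectivity/surjectivity of the restriction map, triviality of $\langle\hbox{c-ref.}\rangle \cap \Lambda$, and the remark about 2-sym.\ and g-ref.) are harmless elaborations of what the paper leaves implicit.
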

\begin{proof} Let $\mathrm{O}$ be the central circle of the annulus, 
and let $\Lambda$ be the group of all symmetries of the annulus that leave each horizontal geodesic invariant. 
Restriction induces an isomorphism from $\Lambda$ to $\mathrm{Isom}(\mathrm{O})$ by Lemmas 1 of 
\cite{R-T-I} and \ref{L17}. 
We have that  $\mathrm{Sym}(\Mu) = \langle$c-ref.$\rangle \times \Lambda$, 
since every symmetry of the annulus leaves $\mathrm{O}$ invariant,   
and c-ref.\ commutes with every symmetry in $\Lambda$. 
Therefore $\mathrm{Sym}(\Mu)$ is isomorphic to $C_2\times \mathrm{O}(2)$. 
We have that $\mathrm{Sym}(\Mu) = \mathrm{Aff}(\Mu)$ by Lemmas 1 and 7 of \cite{R-T-I} and \ref{L17}. 
The epimorphism $\Omega:  \mathrm{Aff}(\Mu) \to \mathrm{Out}(\Mu)$ maps  {\rm \{idt., c-ref., v-ref., 2-rot.\}} isomorphically onto 
$\mathrm{Out}(\Mu)$ by Theorem 3 of \cite{R-T-I}, since v-ref.\ acts as a reflection of $\mathrm{O}$. 
\end{proof}

\begin{table}  
\begin{tabular}{rlllll}
no. & fibers & grp. & quotients &  structure group action & classifying pair \\
\hline 
    6 & $(\ast\ast, \mathrm{O})$ & $C_1$ & $(\ast\ast, \mathrm{O})$ & (idt., idt.) &  \{idt., idt.\} \\
    8 & $(\ast\ast, \mathrm{O})$ & $C_2$ & $(\ast\ast, \mathrm{O})$ & (c-ref., 2-rot.) & \{c-ref., c-ref.\}  \\
  10 & $(\ast\ast, \mathrm{O})$ & $C_2$ & $(\ast 2222, \mathrm{I})$ & (v-ref., ref.) &  \{v-ref., v-ref.$\}_\ast$ \\
  11 & $(\ast\ast, \mathrm{O})$& $C_2$ & $(22\ast, \mathrm{I})$ & (2-rot., ref.) & \{2-rot., 2-rot.$\}_\ast$\\
  12 & $(\ast\ast, \mathrm{O})$ & $D_2$ & $(\ast 2222, \mathrm{I})$ & (v-ref., ref.), (c-ref., 2-rot.) & \{v-ref.,  2-rot.$\}_\ast$ \\
  25 & $(\ast\ast, \mathrm{I})$ & $C_1$ & $(\ast\ast, \mathrm{I})$ & (idt., idt.) &  \{idt., idt.\} \\
  26 & $(\ast\ast, \mathrm{O})$ & $C_2$ & $(\ast\ast, \mathrm{I})$ & (2-sym., ref.) & \{2-sym., 2-sym.\} \\
  26 & $(\ast\ast, \mathrm{O})$  & $C_2$ & $(\ast 2222, \mathrm{O})$ & (v-ref., 2-rot.) &  \{v-ref., v-ref.\} \\
  28 & $(\ast\ast, \mathrm{O})$ & $C_2$ & $(\ast\ast, \mathrm{I})$ & (c-ref., ref.) & \{c-ref., c-ref.\}  \\
  31 & $(\ast\ast, \mathrm{O})$ & $C_2$ & $(\ast\times, \mathrm{I})$ & (g-ref., ref.) & \{g-ref., g-ref.\} \\
  31 & $(\ast\ast, \mathrm{O})$ & $C_2$ & $(22\ast, \mathrm{O})$ & (2-rot., 2-rot.) & \{2-rot., 2-rot.\} \\
  35 & $(\ast\ast, \mathrm{I})$ & $D_1$ & $(\ast\ast, \mathrm{I})$ & (c-ref., ref.) & \{idt., c-ref.\} \\
  36 & $(\ast\ast, \mathrm{O})$  & $D_2$ & $(\ast\ast, \mathrm{I})$ & (2-sym., ref.), (c-ref., 2-rot.) & \{2-sym., g-ref.\} \\
  38 & $(\ast\ast, \mathrm{I})$ & $D_1$ & $(\ast\ast, \mathrm{I})$ & (2-sym., ref.) &  \{idt., 2-sym.\} \\
  40 & $(\ast\ast, \mathrm{O})$   & $D_2$ & $(\ast\ast, \mathrm{I})$ & (c-ref., ref.), (2-sym., 2-rot.) &  \{c-ref.,  g-ref.\} \\
  44 & $(\ast\ast, \mathrm{I})$ & $D_1$ & $(\ast\times, \mathrm{I})$ & (g-ref., ref.) &  \{idt., g-ref.\} \\
  46 & $(\ast\ast, \mathrm{O})$ & $D_2$ & $(\ast\ast, \mathrm{I})$ & (c-ref., ref.), (g-ref., 2-rot.) & \{c-ref., 2-sym.\} \\
  51 & $(\ast\ast, \mathrm{I})$   & $D_1$ & $(\ast 2222, \mathrm{I})$ & (v-ref., ref.) &  \{idt., v-ref.\} \\
  53 & $(\ast\ast, \mathrm{O})$ & $D_2$ & $(\ast 2222, \mathrm{I})$ & (v-ref., ref.), (2-rot., 2-rot.) & \{v-ref., c-ref.\}\\
  55 & $(\ast\ast, \mathrm{O})$ & $D_2$ & $(\ast 2222, \mathrm{I})$ & (2-sym., ref.), (v-ref.$'$, 2-rot.) & \{2-sym., v-ref.\} \\
  57 & $(\ast\ast, \mathrm{O})$  & $D_2$ & $(\ast 2222, \mathrm{I})$ & (c-ref., ref.), (v-ref., 2-rot.) & \{c-ref.,  2-rot.\} \\
  58 & $(\ast\ast, \mathrm{O})$ & $D_2$ & $(2{\ast}22, \mathrm{I})$ & (g-ref., ref.), (2-rot.$'$, 2-rot.) & \{g-ref., v-ref.\} \\
  59 & $(\ast\ast, \mathrm{I})$   & $D_1$ & $(22\ast, \mathrm{I})$ & (2-rot., ref.) &  \{idt., 2-rot.\} \\
  62 & $(\ast\ast, \mathrm{O})$ & $D_2$ & $(22\ast, \mathrm{I})$ & (2-sym., ref.), (2-rot.$'$, 2-rot.) & \{2-sym., 2-rot.\} \\
  62 & $(\ast\ast, \mathrm{O})$ & $D_2$ & $(2{\ast}22, \mathrm{I})$ & (g-ref., ref.), (v-ref., 2-rot.) &  \{g-ref., 2-rot.$'$\}

\end{tabular}

\medskip
\caption{The classification of the co-Seifert fibrations of 3-space groups 
whose generic fiber is of type $\ast\ast$ with IT number 3}\label{T15}
\end{table}

The derivation of Table \ref{T15} for fiber of type $\ast\ast$ is similar to the derivation of Table \ref{T14} for fiber of type $\times\times$.

\section{Generic Fiber $2222$ (pillow) with IT number 2} 

Let $\Mu = \langle e_1+I, e_2+I, -I\rangle$ where $e_1, e_2$ are the standard basis vectors of $E^2$. 
Then $\Mu$ is a 2-space group with IT number 2. 
The Conway notation for $\Mu$ is $2222$ and the IT notation is $p2$. 
The flat $E^2/\Mu$ is a {\it square pillow} $\Box$. 
A flat 2-orbifold affinely equivalent to $\Box$ is called a {\it pillow}. 
A pillow is orientable and has four $180^\circ$ cone points. 

\begin{lemma}\label{L19} 
If $\Mu = \langle e_1+I, e_2+I, -I\rangle$, then 
$\Omega: \mathrm{Aff}(\Mu) \to \mathrm{Out}(\Mu)$ is an isomorphism. 
\end{lemma}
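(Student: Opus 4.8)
The plan is to adapt the argument of Lemma \ref{L1}, but with one essential difference: here $\mathrm{Sym}(\Mu)\neq\mathrm{Aff}(\Mu)$, since the group of affinities of a pillow is infinite. Indeed, every $B\in\mathrm{GL}(2,\integers)$ normalizes $\Mu$ (it normalizes the lattice $\langle e_1+I,e_2+I\rangle$ and fixes $-I$ under conjugation), and $\mathrm{GL}(2,\integers)\cap\Mu=\{I,-I\}$, so $\mathrm{Aff}(\Mu)=N_A(\Mu)/\Mu$ contains a copy of $\mathrm{GL}(2,\integers)/\{I,-I\}$. Thus we should not expect $\mathrm{Sym}(\Mu)=\mathrm{Aff}(\Mu)$, and the detour through $\mathrm{Out}_E(\Mu)$ used in Lemma \ref{L1} is unavailable; instead we pass directly to the isomorphism via Theorems 1 and 3 of \cite{R-T-I}.

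First I would record that $Z(\Mu)=\{I\}$. This follows from Lemma 5 of \cite{R-T-I}, or directly: the point group of $\Mu$ is $\langle -I\rangle$, which fixes no nonzero vector of $\realnos^2$, so the only translation in $Z(\Mu)$ is the identity; and any element of $\Mu$ with linear part $-I$ fails to commute with $e_1+I$, since conjugating $e_1+I$ by it yields $-e_1+I\neq e_1+I$. The same computation shows that the centralizer of $\Mu$ in $\mathrm{Aff}(E^2)$ is trivial as well: centralizing $\langle e_1+I,e_2+I\rangle$ forces the linear part to be $I$, and a pure translation commuting with $-I$ must be trivial.

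Then $\Omega:\mathrm{Aff}(\Mu)\to\mathrm{Out}(\Mu)$ is an epimorphism by Theorem 3 of \cite{R-T-I} (every automorphism of a space group is realized by conjugation by an affinity), and its kernel---which by Theorem 1 of \cite{R-T-I} is controlled by the point-group-fixed vectors, equivalently by $Z(\Mu)$ and the centralizer of $\Mu$ in $\mathrm{Aff}(E^2)$---is trivial by the previous paragraph. Hence $\Omega$ is an isomorphism.

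I do not expect a genuine obstacle here: given the structural results of \cite{R-T-I}, the only thing that needs checking is $Z(\Mu)=\{I\}$, a one-line computation. (By contrast, exhibiting the explicit free product with amalgamation structure of $\mathrm{Aff}(\Mu)\cong\mathrm{Out}(\Mu)$ promised in the introduction is a separate and more substantial task, and is not needed for this lemma; it will be built on top of the present isomorphism.)
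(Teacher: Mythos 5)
Your proposal is correct and follows the paper's own route: the paper's proof is exactly the two-step argument that $Z(\Mu)=\{I\}$ (by Lemma 5 of \cite{R-T-I}) and then that $\Omega$ is an isomorphism by Theorems 1 and 3 of \cite{R-T-I}. Your additional remarks (the direct computation of the center and the observation that $\mathrm{Aff}(\Mu)$ is infinite, so the $\mathrm{Out}_E$ detour of Lemma \ref{L1} is unavailable) are accurate but not needed.
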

\begin{proof}
We have that $Z(\Mu) =\{I\}$ by Lemma 5 of \cite{R-T-I}. 
Hence $\Omega: \mathrm{Aff}(\Mu) \to \mathrm{Out}(\Mu)$ is an isomorphism by Theorems 1 and 3 of \cite{R-T-I}. 
\end{proof}

\begin{lemma}\label{L20} 
If $\Mu = \langle e_1+I, e_2+I, -I\rangle$, then 
$$N_A(\Mu) = \left\{\frac{m}{2}e_1+ \frac{n}{2}e_2  + A :  m, n \in \integers\ \hbox{and}\ A \in \mathrm{GL}(2,\integers)\right\}.$$
\end{lemma}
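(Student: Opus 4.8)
The plan is to follow the template used in the proofs of Lemmas~\ref{L13}, \ref{L15}, and \ref{L17}. Write a general element of $\mathrm{Aff}(E^2)$ as $b+B$ with $b \in \realnos^2$ and $B \in \mathrm{GL}(2,\realnos)$. Since the translation subgroup $\langle e_1+I, e_2+I\rangle$ is the set of all translations in $\Mu$, it is characteristic in $\Mu$, so $b+B \in N_A(\Mu)$ if and only if $B$ normalizes $\langle e_1+I, e_2+I\rangle$ and $(b+B)(-I)(b+B)^{-1} \in \Mu$. The first condition holds if and only if $B\integers^2 = \integers^2$, i.e.\ $B \in \mathrm{GL}(2,\integers)$, exactly as in Lemma~\ref{L17}.

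Next I would compute the conjugate of the point reflection. Using $(b+B)(c+C)(b+B)^{-1} = (b - BCB^{-1}b + Bc) + BCB^{-1}$ with $c = 0$ and $C = -I$, and noting that $B(-I)B^{-1} = -I$ for every $B$, one gets $(b+B)(-I)(b+B)^{-1} = 2b + (-I)$. The elements of $\Mu$ with linear part $-I$ are precisely $n + (-I)$ with $n \in \integers^2$, since the point group of $\Mu$ is $\{I, -I\}$. Hence $(b+B)(-I)(b+B)^{-1} \in \Mu$ if and only if $2b \in \integers^2$, that is, $b = \frac{m}{2}e_1 + \frac{n}{2}e_2$ for some $m, n \in \integers$. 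Combining the two conditions gives the asserted description of $N_A(\Mu)$.

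The only point that needs a little care — and which the paper's earlier lemmas treat as routine — is the ``if'' half of the reduction in the first paragraph: if $B \in \mathrm{GL}(2,\integers)$ and $(b+B)(-I)(b+B)^{-1} \in \Mu$, one should check that $(b+B)\Mu(b+B)^{-1} = \Mu$ and not merely $\subseteq \Mu$. This follows because the set $S = \{\frac{m}{2}e_1 + \frac{n}{2}e_2 + A : m,n \in \integers,\ A \in \mathrm{GL}(2,\integers)\}$ is a subgroup of $\mathrm{Aff}(E^2)$, and conjugation by $b+B \in S$ carries the generators $e_1+I$, $e_2+I$, $-I$ of $\Mu$ into $\Mu$, so $(b+B)\Mu(b+B)^{-1} \subseteq \Mu$; applying the same to $(b+B)^{-1} \in S$ yields the reverse inclusion. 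In contrast to the reflection cases of Lemmas~\ref{L13}--\ref{L17}, the conjugacy condition on $B$ imposes nothing beyond $B \in \mathrm{GL}(2,\integers)$, because $-I$ is central; this is what makes $N_A(\Mu)$ as large as stated.
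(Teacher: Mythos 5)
Your proof is correct and follows essentially the same route as the paper: reduce membership in $N_A(\Mu)$ to normalizing the translation lattice (forcing $B \in \mathrm{GL}(2,\integers)$) together with the condition $(b+B)(-I)(b+B)^{-1} = 2b - I \in \Mu$, which gives $2b \in \integers^2$. The extra verification that the conjugate subgroup equals $\Mu$ (rather than merely being contained in it) is a point the paper leaves implicit, but it does not change the argument.
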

\begin{proof}
Observe that $a+ A \in N_A(\Mu)$ if and only if $A \in N_A(\langle e_1+I, e_2+I\rangle)$ and $(a+I)(-I)(a+I)^{-1} \in \Mu$. 
Now $A \in N_A(\langle e_1+I, e_2+I\rangle)$ if and only if $A \in  \mathrm{GL}(2,\integers)$. 
As $(a+I)(-I)(a+I)^{-1} = 2a - I$, we have that 
$(a+I)(-I)(a+I)^{-1} \in \Mu$ if and only if  $a = \frac{m}{2}e_1+ \frac{n}{2}e_2$ for some $m, n \in \integers$. 
\end{proof}

\begin{lemma}\label{L21} 
Let $\Mu = \langle e_1+I, e_2+I, -I\rangle$. 
The group $\mathrm{Aff}(\Mu)$ has a normal dihedral subgroup $\Kappa$ of order 4 generated by 
$(e_1/2+ I)_\star$ and $(e_2/2 +I)_\star$. 
The map $\eta: \mathrm{Aff}(\Mu) \to \mathrm{PGL}(2,\integers)$, defined by 
$\eta((a+A)_\star) = \pm A$ for each $A \in \mathrm{GL}(2,\integers)$, 
is an epimorphism with kernel $\Kappa$. 
The map $\sigma:  \mathrm{PGL}(2,\integers) \to \mathrm{Aff}(\Mu)$, defined by $\sigma(\pm A) = A_\star$, 
is a monomorphism,  and $\sigma$ is a right inverse of $\eta$. 
\end{lemma}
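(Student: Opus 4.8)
The plan is to work directly with the description of $N_A(\Mu)$ from Lemma \ref{L20} and the isomorphism $\Omega: \mathrm{Aff}(\Mu)\to\mathrm{Out}(\Mu)$ from Lemma \ref{L19}. First I would recall, from the standard theory (Theorems 1 and 2 of \cite{R-T-I}), that $\mathrm{Aff}(\Mu)$ is naturally identified with $N_A(\Mu)/\Mu$, where the class of $a+A$ acts on $E^2/\Mu$ as the affinity $(a+A)_\star$. Combining this with Lemma \ref{L20}, every element of $\mathrm{Aff}(\Mu)$ has the form $(\frac{m}{2}e_1+\frac{n}{2}e_2+A)_\star$ with $m,n\in\integers$ and $A\in\mathrm{GL}(2,\integers)$, and two such representatives give the same affinity exactly when they differ by an element of $\Mu=\langle e_1+I,e_2+I,-I\rangle$.

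Next I would verify the three asserted maps are well defined. For $\eta$: if $(a+A)_\star=(a'+A')_\star$ then $a+A$ and $a'+A'$ differ by an element of $\Mu$, so $A'=\pm A$ and hence $\pm A=\pm A'$ in $\mathrm{PGL}(2,\integers)$; thus $\eta$ is well defined, and it is clearly a homomorphism since $(b+B)(a+A)=(b+Ba)+BA$ maps to $\pm BA$. Surjectivity of $\eta$ is immediate because $A_\star$ maps to $\pm A$ for every $A\in\mathrm{GL}(2,\integers)$. For the kernel: $\eta((a+A)_\star)=\pm I$ forces $A=\pm I$, and modulo $\Mu$ (which contains $-I$) we may take $A=I$, so the kernel consists of the classes $(\frac{m}{2}e_1+\frac{n}{2}e_2+I)_\star$; modulo $e_1+I$ and $e_2+I$ we may reduce $m,n$ to $\{0,1\}$, giving the four elements $\mathrm{idt.}$, $(e_1/2+I)_\star$, $(e_2/2+I)_\star$, $(e_1/2+e_2/2+I)_\star$. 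A short computation shows each nonidentity element has order $2$ (since $2\cdot\frac12 e_i\in\integers^2$) and that they commute and the product of the first two is the third, so $\Kappa=\ker\eta$ is a dihedral (Klein four) group of order $4$, normal as the kernel of a homomorphism, and it is generated by $(e_1/2+I)_\star$ and $(e_2/2+I)_\star$ as claimed.

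Finally I would treat $\sigma$. Define $\sigma(\pm A)=A_\star$; this needs checking that it does not depend on the sign representative, i.e. that $A_\star=(-A)_\star$. But $-A=(0+(-I))(0+A)$ and $-I+0\in\Mu$, so indeed $A_\star=(-A)_\star$ as affinities, and $\sigma$ is well defined. It is a homomorphism since $(\pm B)(\pm A)=\pm BA\mapsto (BA)_\star=B_\star A_\star$. Injectivity: if $A_\star=\mathrm{idt.}$ then $0+A\in\Mu$, which forces $A=\pm I$, i.e. $\pm A=\pm I$ in $\mathrm{PGL}(2,\integers)$; so $\sigma$ is a monomorphism. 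And $\eta\sigma(\pm A)=\eta(A_\star)=\pm A$, so $\sigma$ is a right inverse of $\eta$, completing the proof. I do not expect a serious obstacle here; the only point requiring care is keeping track of when two affine representatives define the same affinity of $E^2/\Mu$, i.e. consistently using that the indeterminacy is exactly $\Mu$ (in particular that $-I\in\Mu$ collapses the two signs), and ensuring the four kernel elements are genuinely distinct as affinities, which follows because no two of $0,\ e_1/2,\ e_2/2,\ e_1/2+e_2/2$ differ by a vector in $\integers^2$.
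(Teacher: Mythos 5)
Your proof is correct, and it reaches the same conclusion by a more hands-on route than the paper. The paper's proof never verifies well-definedness, kernels, or generators directly: it gets normality of $\Kappa$ by identifying it as $\Omega^{-1}(\mathrm{Out}_E^1)$ (using $Z(\Mu)=\{I\}$, the isomorphism $\Omega:\mathrm{Aff}(\Mu)\to\mathrm{Out}(\Mu)$, and Lemma 9 of the isometry-group paper), computes $\Kappa=\Phi(\Tau\Mu/\Mu)\cong\Tau/\Tau\cap\Mu$ with $\Tau$ the translation subgroup of $N_A(\Mu)$, and obtains $\eta$ as an epimorphism with kernel $\Kappa$ from the fact that the point group of $N_A(\Mu)$ is $\mathrm{GL}(2,\integers)$, again by citing Lemma 9 of that reference; $\sigma$ is well defined by its Lemma 7, and injectivity is deduced exactly as you note, from $\eta\sigma=\mathrm{id}$. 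You instead take the single identification $\mathrm{Aff}(\Mu)\cong N_A(\Mu)/\Mu$ (this is Lemma 7 of the cited paper rather than Theorems 1 and 2, a citation detail only), plug in the explicit description of $N_A(\Mu)$ from Lemma \ref{L20}, and check everything by direct computation: well-definedness of $\eta$ and $\sigma$ from the fact that two affine representatives differ by an element of $\Mu$ (so in particular $-I\in\Mu$ collapses the sign), the kernel reduced modulo $e_1+I$, $e_2+I$, $-I$ to the four translation classes, their distinctness, and the order-2 relations. Your version is self-contained and makes the group structure of $\Kappa$ transparent; the paper's version is shorter and ties $\Kappa$ to the subgroup $\mathrm{Out}_E^1$ of the general theory, which is the form in which it is reused elsewhere. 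Both arguments rest on Lemma \ref{L20}, and I see no gap in yours.
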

\begin{proof}
We have that $Z(\Mu)= \{I\}$ by Lemma 5 of \cite{R-T-I},  
and $\Omega: \mathrm{Aff}(\Mu) \to \mathrm{Out}(\Mu)$ is 
an isomorphism by Theorems 1 and 3 of \cite{R-T-I}.  
Hence $\Kappa = \Omega^{-1}(\mathrm{Out}_E^1)$ is a normal subgroup of $\mathrm{Aff}(\Mu)$ by Lemma 9 of \cite{R-T-I}.
Let $\Tau$ be the translation subgroup of $N_A(\Mu)$.  Then $\Kappa = \Phi(\Tau\Mu/\Mu)$ 
by Lemmas 7 and 8 of \cite{R-T-I}. 
Now $\Tau\Mu/\Mu \cong \Tau/\Tau\cap \Mu$ is a dihedral group of order 4 generated by 
$(e_1/2+ I)\Mu$ and $(e_2/2 +I)\Mu$ by Lemma \ref{L20}. 
Hence $\Kappa$ is a dihedral group generated by $(e_1/2+ I)_\star$ and $(e_2/2 +I)_\star$. 

The point group $\Pi_A$ of $N_A(\Mu)$ is $\mathrm{GL}(2,\integers)$ by Lemma \ref{L20}. 
Hence, the map $\eta: \mathrm{Aff}(\Mu) \to \mathrm{PGL}(2,\integers)$, defined by 
$\eta((a+A)_\star) = \pm A$ for each $A \in \mathrm{GL}(2,\integers)\}$, 
is an epimorphism with kernel $\Kappa$ by Lemma 9 of \cite{R-T-I}. 
The map $\sigma:  \mathrm{PGL}(2,\integers) \to \mathrm{Aff}(\Mu)$ 
is a well-defined homomorphism by Lemma 7 of \cite{R-T-I}. 
The map $\sigma$ is a monomorphism, since $\sigma$ is a right inverse of $\eta$.  
\end{proof}

\begin{lemma}\label{L22} 
Let 
$$A = \left(\begin{array}{rr} -1 & 0 \\ 0 & 1 \end{array}\right),\quad  B = \left(\begin{array}{rr} -1 & 1 \\ 0 & 1 \end{array}\right), 
\quad C = \left(\begin{array}{rr} 0 & 1 \\ 1 & 0 \end{array}\right). $$
The group $\mathrm{PGL}(2,\integers)$ is the free product of the dihedral subgroup 
$\langle \pm A,  \pm C\rangle$ of order 4 
and the dihedral subgroup $\langle\pm B,  \pm C \rangle$ of order 6 
amalgamated along the subgroup $\langle \pm C\rangle$ of order 2. 
Every finite subgroup of  $\mathrm{PGL}(2,\integers)$ is conjugate to a subgroup 
of either $\langle \pm A,  \pm C\rangle$ or $\langle\pm B,  \pm C \rangle$. 
\end{lemma}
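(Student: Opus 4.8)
The plan is to prove the free-product-with-amalgamation decomposition of $\mathrm{PGL}(2,\integers)$ by pulling it down from the well-known structure of $\mathrm{PSL}(2,\integers)$ and $\mathrm{GL}(2,\integers)$, and then to derive the statement about finite subgroups from the general theory of groups acting on trees (the Bass–Serre theory). First I would recall that $\mathrm{PSL}(2,\integers) \cong C_2 \ast C_3$, with the $C_2$ generated by the class of $S = \left(\begin{array}{rr} 0 & -1 \\ 1 & 0 \end{array}\right)$ and the $C_3$ generated by the class of $ST$, where $T$ is the standard parabolic; equivalently this is the free product of $\langle \pm S\rangle$ and $\langle \pm ST\rangle$ amalgamated along the trivial group. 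Passing to $\mathrm{PGL}(2,\integers)$, which contains $\mathrm{PSL}(2,\integers)$ as an index-2 subgroup, one gets that $\mathrm{PGL}(2,\integers)$ is the free product of a dihedral group of order $4$ and a dihedral group of order $6$ amalgamated over a common $C_2$; this is classical (it is the $(2,3,\infty)$ triangle group, or Coxeter group with that diagram). The task is then to check that the specific matrices $A, B, C$ in the statement realize this decomposition, i.e.\ that $\langle \pm A, \pm C\rangle$ has order $4$, $\langle \pm B, \pm C\rangle$ has order $6$, and that the natural map from the amalgam $\langle \pm A,\pm C\rangle \ast_{\langle \pm C\rangle} \langle \pm B, \pm C\rangle$ to $\mathrm{PGL}(2,\integers)$ is an isomorphism.

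The concrete verification proceeds in a few routine steps. First, compute orders: $A^2 = I$, $C^2 = I$, and $AC = \left(\begin{array}{rr} 0 & -1 \\ 1 & 0 \end{array}\right)$ has order $4$ in $\mathrm{GL}(2,\integers)$ but order $2$ in $\mathrm{PGL}(2,\integers)$ since $(AC)^2 = -I$; hence $\langle \pm A, \pm C\rangle$ is the Klein four-group. Next, $B^2 = I$ and $BC = \left(\begin{array}{rr} 1 & -1 \\ 1 & 0 \end{array}\right)$, whose cube is $-I$, so $\pm BC$ has order $3$ and $\langle \pm B, \pm C\rangle$ is dihedral of order $6$. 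Then I would exhibit a generating set of $\mathrm{PGL}(2,\integers)$ inside $\langle \pm A, \pm B, \pm C\rangle$: since $\mathrm{GL}(2,\integers)$ is generated by the three matrices $\left(\begin{array}{rr} 1 & 1 \\ 0 & 1 \end{array}\right)$, $\left(\begin{array}{rr} 0 & 1 \\ 1 & 0 \end{array}\right)$, and $\left(\begin{array}{rr} -1 & 0 \\ 0 & 1 \end{array}\right)$, and $AB = \left(\begin{array}{rr} 1 & -1 \\ 0 & 1 \end{array}\right)$ is (up to sign and $C$-conjugation) the missing parabolic, one gets surjectivity. Finally, to see there are no extra relations, I would appeal to the standard action of $\mathrm{PGL}(2,\integers)$ on the Farey tree (the $(3,\infty)$ tree, or the Bass–Serre tree dual to the tessellation of the hyperbolic plane by $(2,3,\infty)$ triangles): $\langle \pm A, \pm C\rangle$ and $\langle \pm B, \pm C\rangle$ are the stabilizers of an edge's two endpoints and $\langle \pm C\rangle$ is the edge stabilizer, the quotient graph is a single edge, and the fundamental-domain / Bass–Serre argument gives exactly the claimed amalgam presentation.

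For the second assertion, I would invoke the standard fixed-point theorem for group actions on trees: a finite group acting on a tree fixes a vertex. Applying this to the action of an arbitrary finite subgroup $F \le \mathrm{PGL}(2,\integers)$ on the Bass–Serre tree of the amalgam decomposition, $F$ fixes some vertex, hence is contained in a conjugate of one of the two vertex groups $\langle \pm A, \pm C\rangle$ or $\langle \pm B, \pm C\rangle$. The main obstacle — really the only nontrivial point — is pinning down that the three given explicit matrices generate the whole group and satisfy precisely the amalgam relations; everything else is either the citation of the tree-action machinery or a one-line matrix computation. One small care point is that signs must be tracked consistently when passing between $\mathrm{GL}(2,\integers)$ and $\mathrm{PGL}(2,\integers)$, e.g.\ $(AC)^2 = -I$ and $(BC)^3 = -I$ are what make the projective orders drop to $2$ and $3$ respectively; I would state these identities explicitly so the reader can confirm the orders of the two dihedral subgroups without ambiguity.
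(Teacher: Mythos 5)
Your proposal is correct, and its key computations $(AC)^2=-I$, $(BC)^3=-I$ are exactly the ones the paper records; but the route to the amalgam presentation differs from the paper's. The paper works directly with the action of $\mathrm{PGL}(2,\integers)$ on the upper half-plane: it takes the hyperbolic triangle with vertices $i$, $\frac12+\frac{\sqrt3}{2}i$, $\infty$ as a fundamental domain, identifies $\pm A$, $\pm C$, $\pm B$ with the reflections $R_1,R_2,R_3$ in its three sides, and invokes Poincar\'e's fundamental polygon theorem to obtain the Coxeter presentation $\langle R_1,R_2,R_3; R_i^2,(R_1R_2)^2,(R_2R_3)^3\rangle$, which it then rewrites as the amalgam; generation comes for free from that theorem. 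You instead import the classical structure of $\mathrm{PSL}(2,\integers)\cong C_2\ast C_3$ (equivalently the $(2,3,\infty)$ Coxeter group picture), check by matrix computation that $\langle\pm A,\pm C\rangle$ and $\langle\pm B,\pm C\rangle$ have orders $4$ and $6$, prove surjectivity by citing the standard generators of $\mathrm{GL}(2,\integers)$ together with $BA=\left(\begin{array}{rr}1&1\\0&1\end{array}\right)$, and get freeness by identifying vertex and edge stabilizers for the action on the tree dual to the modular tessellation. Your approach needs two external inputs the paper avoids (the known generation of $\mathrm{GL}(2,\integers)$ and the known decomposition of $\mathrm{PSL}(2,\integers)$, or equivalently the stabilizer identifications on the tree, which ultimately rest on the same fundamental-domain fact), while the paper's needs Poincar\'e's theorem; the geometric substance is the same. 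For the finite-subgroup statement the two arguments coincide: your fixed-point theorem for finite group actions on the Bass--Serre tree is the tree-theoretic form of the torsion theorem for amalgamated products that the paper cites. One cosmetic caution: the Bass--Serre tree of $D_2\ast_{C_2}D_3$ is the $(2,3)$-biregular tree, so calling it the Farey (trivalent) tree is slightly loose, though harmless to the argument.
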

\begin{proof}
The matrices $A, B, C$ have order 2, and $(AC)^2 = -I$, and $(BC)^3=-I$. 
Hence $\langle \pm A, \pm C\rangle$ is a dihedral group of order 4,  
and $\langle \pm B, \pm C\rangle$ is a dihedral group of order 6. 

The group $\mathrm{PGL}(2,\integers)$ acts effectively by isometries on the upper complex half-space model of hyperbolic 2-space $H^2$ by
$$\pm\left(\begin{array}{cc} a & b \\ c & c\end{array}\right)z = \frac{az+b}{cz+d} \quad \hbox{if}\  
\det\left(\begin{array}{cc} a & b \\ c & c\end{array}\right) = 1,$$
and 
$$\pm\left(\begin{array}{cc} a & b \\ c & c\end{array}\right)z = \frac{a\overline{z}+b}{c\overline{z}+d} \quad \hbox{if}\  
\det\left(\begin{array}{cc} a & b \\ c & c\end{array}\right) = -1,$$
It is well know that a fundamental domain for the action of  $\mathrm{PGL}(2,\integers)$ on $H^2$ is the hyperbolic triangle $T$ with vertices $i, \frac{1}{2}+\frac{\sqrt{3}}{2}i, \infty$ and corresponding angles $\pi/2, \pi/3, 0$. 
Moreover $\mathrm{PGL}(2,\integers)$ is a triangle reflection group with respect to the reflections  
$R_1(z) = -\overline{z}, R_2(z) = 1/\overline{z}, R_3(z) = 1-\overline{z}$ in the sides of $T$. 
Note that $R_1$ is the reflection in the $y$-axis, $R_2$ is the inversion in the circle $|z| = 1$, 
and $R_3$ is the reflection in the line $x = \frac{1}{2}$. 
Moreover
$$(\pm A)z = R_1(z), \quad (\pm C)z = R_2(z), \quad (\pm B)z = R_3(z).$$

By Poincar{\'e}'s fundamental polygon theorem (Theorem 13.5.3 \cite{R}), 
the group $\mathrm{PGL}(2,\integers)$ has the presentation 
$$\langle R_1, R_2, R_3; R_1^2, R_2^2, R_3^2, (R_1R_2)^2, (R_2R_3)^3\rangle,$$
which is equivalent to the presentation
$$\langle R_1, R_2, R_3, R_4; R_1^2, R_2^2, (R_1R_2)^2, R_2 = R_4, R_3^2, R_4^2, (R_4R_3)^3\rangle.$$
Therefore $\mathrm{PGL}(2,\integers)$ is the free product with amalgamation 
of the dihedral group $\langle R_1, R_2\rangle$ of order 4 and the dihedral group 
$\langle R_2, R_3\rangle$ of order 6 amalgamated along the subgroup $\langle R_2\rangle$ of order 2.

By the torsion theorem for amalgamated products of groups, every finite subgroup of 
$\mathrm{GL}(2,\integers)$ is conjugate to a subgroup of $\pm\langle A, \pm C\rangle$ or $\langle \pm B, \pm C\rangle$. 
\end{proof}

\begin{lemma}\label{L23} 
Let $\Mu = \langle e_1+I, e_2+I, -I\rangle$, let $\Kappa = \langle (e_1/2+ I)_\star, (e_2/2 +I)_\star \rangle$, and  
let $A, B, C$ be defined as in Lemma \ref{L22}. 
The group $\mathrm{Aff}(\Mu)$  is the free product of the subgroup $\langle \Kappa, A_\star, C_\star\rangle$ of order 16
and the subgroup $\langle \Kappa, B_\star, C_\star\rangle$ of order 24 amalgamated 
along the subgroup $\langle \Kappa, C_\star\rangle$ of order 8. 
Every finite subgroup of $\mathrm{Aff}(\Mu)$ is conjugate to a subgroup of either
$\langle \Kappa, A_\star, C_\star\rangle$ or $\langle \Kappa, B_\star, C_\star\rangle$. 
\end{lemma}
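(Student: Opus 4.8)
The plan is to lift the amalgamated free product decomposition of $\mathrm{PGL}(2,\integers)$ from Lemma~\ref{L22} through the group extension
$$1 \longrightarrow \Kappa \longrightarrow \mathrm{Aff}(\Mu) \stackrel{\eta}{\longrightarrow} \mathrm{PGL}(2,\integers) \longrightarrow 1$$
supplied by Lemma~\ref{L21}, exploiting the general principle that a finite normal kernel can always be ``pulled back through'' such a splitting, so that the preimages of the vertex and edge groups form an amalgam decomposition of the total group.

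First I would identify the three subgroups appearing in the statement as full $\eta$-preimages. Write $G_A = \langle \pm A, \pm C\rangle$, $G_B = \langle \pm B, \pm C\rangle$, $G_C = \langle \pm C\rangle$ for the subgroups in Lemma~\ref{L22}. Since Lemma~\ref{L21} gives $\eta(A_\star) = \pm A$, $\eta(B_\star) = \pm B$, $\eta(C_\star) = \pm C$ and $\ker\eta = \Kappa$, each of $\langle \Kappa, A_\star, C_\star\rangle$, $\langle \Kappa, B_\star, C_\star\rangle$, $\langle \Kappa, C_\star\rangle$ is contained in the corresponding preimage; conversely, if $\eta(g)\in G_A$ then $\eta(g)=\eta(w)$ for some word $w$ in $A_\star, C_\star$, so $gw^{-1}\in\Kappa$ and $g\in\langle\Kappa, A_\star, C_\star\rangle$, and similarly for $G_B$, $G_C$. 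Hence $\eta^{-1}(G_A)=\langle \Kappa, A_\star, C_\star\rangle$, $\eta^{-1}(G_B)=\langle \Kappa, B_\star, C_\star\rangle$, $\eta^{-1}(G_C)=\langle \Kappa, C_\star\rangle$. Because $\Kappa$ has order $4$ and $|\eta^{-1}(H)| = |\Kappa|\,|H|$ for every finite $H\le\mathrm{PGL}(2,\integers)$, these preimages have orders $16$, $24$, $8$ respectively, matching the orders asserted.

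Next I would invoke Bass--Serre theory. By Lemma~\ref{L22}, $\mathrm{PGL}(2,\integers)$ acts without inversions on its Bass--Serre tree $T$ for the splitting $G_A *_{G_C} G_B$, with quotient a single edge joining two distinct vertices, and with base vertices $v,w$ and edge $e$ satisfying $\mathrm{Stab}(v)=G_A$, $\mathrm{Stab}(w)=G_B$, $\mathrm{Stab}(e)=G_C$. Letting $\mathrm{Aff}(\Mu)$ act on $T$ through $\eta$, the orbit structure is unchanged, since $\Kappa$ acts trivially and $\eta$ is surjective; thus the action remains without inversions with quotient a single edge, and the stabilizer of $v$ in $\mathrm{Aff}(\Mu)$ is $\{g:\eta(g)v=v\}=\eta^{-1}(G_A)$, and likewise for $w$ and $e$. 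The structure theorem for a group acting without inversions on a tree with quotient a segment then gives that $\mathrm{Aff}(\Mu)$ is the free product of $\eta^{-1}(G_A)$ and $\eta^{-1}(G_B)$ amalgamated over $\eta^{-1}(G_C)$, which by the previous paragraph is exactly the stated decomposition.

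Finally, the assertion about finite subgroups follows from the decomposition by the torsion theorem for amalgamated products: a finite subgroup of an amalgam $H_1 *_{H_0} H_2$ fixes a vertex of the associated tree, hence is conjugate into a vertex stabilizer, i.e. into $H_1$ or $H_2$; apply this with $H_1=\langle\Kappa, A_\star, C_\star\rangle$ and $H_2=\langle\Kappa, B_\star, C_\star\rangle$. I do not expect a substantive obstacle here: the only point needing care is checking that transporting the $\mathrm{PGL}(2,\integers)$-action on $T$ back along $\eta$ preserves the hypotheses of the Bass--Serre structure theorem and produces exactly the $\eta$-preimages as vertex and edge stabilizers, which is routine once Lemma~\ref{L21} is in hand. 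An alternative, tree-free route is to splice the amalgam presentation of $\mathrm{PGL}(2,\integers)$ with a presentation of the extension, but the Bass--Serre argument is cleaner.
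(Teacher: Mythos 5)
Your proposal is correct and follows essentially the same route as the paper: it uses the epimorphism $\eta$ with kernel $\Kappa$ from Lemma \ref{L21} to let $\mathrm{Aff}(\Mu)$ act on the Bass--Serre tree of the decomposition in Lemma \ref{L22}, identifies the vertex and edge stabilizers as the $\eta$-preimages $\langle \Kappa, A_\star, C_\star\rangle$, $\langle \Kappa, B_\star, C_\star\rangle$, $\langle \Kappa, C_\star\rangle$, and then applies the torsion theorem for amalgamated products to handle finite subgroups. Your explicit verification of the preimage identifications and of the orders $16$, $24$, $8$ fills in details the paper leaves implicit, but the argument is the same.
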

\begin{proof} 
The map $\eta: \mathrm{Aff}(\Mu) \to \mathrm{PGL}(2,\integers)$, defined by $\eta((a+A)_\star) = \pm A$, 
is an epimorphism with kernel $\Kappa$ by Lemma \ref{L21}.  
The group $\mathrm{Aff}(\Mu)$ acts on the Bass-Serre tree of the amalgamated product decomposition of $\mathrm{PGL}(2,\integers)$ via $\eta$. 
Hence $\mathrm{Aff}(\Mu)$ is the amalgamated product of the groups
$\eta^{-1}(\langle \pm A, \pm C\rangle) = \langle \Kappa, A_\star, C_\star\rangle$ 
and $\eta^{-1}(\langle \pm B, \pm C\rangle) = \langle \Kappa, B_\star, C_\star \rangle$
along the subgroup $\eta^{-1}(\langle \pm C\rangle) = \langle \Kappa, C_\star\rangle$. 
\end{proof}

Let $\Delta = \langle e_1+I, e_1/2 + \sqrt{3}e_2/2+I, -I\rangle$. 
Then $\Delta$ is a 2-space group, and the flat orbifold $E^2/\Delta$ is a {\it tetrahedral pillow} $\triangle$. 

\begin{lemma}\label{L24} 
Let $\Mu = \langle e_1+I, e_2+I, -I\rangle$, and let $\Delta = \langle e_1+I, e_1/2 + \sqrt{3}e_2/2+I, -I\rangle$. 
Let $\Kappa = \langle (e_1/2+ I)_\star, (e_2/2 +I)_\star \rangle$, and  
let 
$$A = \left(\begin{array}{rr} -1 & 0 \\ 0 & 1 \end{array}\right),\ \  B = \left(\begin{array}{rr} -1 & 1 \\ 0 & 1 \end{array}\right), 
\ \ C = \left(\begin{array}{rr} 0 & 1 \\ 1 & 0 \end{array}\right), \ \ 
D  = \left(\begin{array}{rr} 1 & -1/2 \\ 0 & \sqrt{3}/2 \end{array}\right). $$
Then $\mathrm{Sym}(\Mu) = \langle \Kappa, A_\star, C_\star\rangle$,  and $D\Mu D^{-1}=\Delta$, 
and $D_\sharp: \mathrm{Aff}(\Mu) \to \mathrm{Aff}(\Delta)$ is an isomorphism, 
and $\mathrm{Sym}(\Delta) = D_\sharp(\langle \Kappa, B_\star, C_\star\rangle)$. 
\end{lemma}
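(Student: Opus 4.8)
The plan is to establish the four assertions of Lemma~\ref{L24} in sequence, leaning on the structural results already in hand. First I would identify $\mathrm{Sym}(\Mu)$ inside $\mathrm{Aff}(\Mu)$. Since $E^2/\Mu$ is the square pillow $\Box = [0,1]^2$ with opposite sides identified, every isometry of $\Box$ is induced by an affine map normalizing $\Mu$ whose linear part preserves the standard inner product, i.e.\ lies in $\mathrm{O}(2)\cap\mathrm{GL}(2,\integers) = \langle -I, A, C\rangle$ (the order-$8$ dihedral symmetry group of the integer lattice). Combining this with Lemma~\ref{L20}, which says $N_A(\Mu)$ has translation part $\frac12\integers e_1 + \frac12\integers e_2$ and arbitrary point group $\mathrm{GL}(2,\integers)$, one sees that $\mathrm{Sym}(\Mu)$ consists exactly of the cosets $(a+A)_\star$ with $a\in\frac12\integers^2$ and $A\in\langle -I,A,C\rangle$. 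By Lemma~\ref{L21}, $\Kappa$ is precisely the subgroup of such cosets with $A=\pm I$, and $A_\star, C_\star$ generate the image of $\langle -I,A,C\rangle$ in $\mathrm{PGL}(2,\integers)$; hence $\mathrm{Sym}(\Mu) = \langle \Kappa, A_\star, C_\star\rangle$, which is the order-$16$ factor of the free product decomposition in Lemma~\ref{L23}.

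Next I would verify $D\Mu D^{-1} = \Delta$. This is a direct computation: $D e_1 = e_1$ and $D e_2 = -\tfrac12 e_1 + \tfrac{\sqrt3}{2} e_2$, so $D$ carries the basis $\{e_1,e_2\}$ of the square lattice to the basis $\{e_1, -\tfrac12 e_1 + \tfrac{\sqrt3}{2}e_2\}$ of the hexagonal lattice generated by $e_1$ and $\tfrac12 e_1 + \tfrac{\sqrt3}{2} e_2$ (these span the same lattice), and $D(-I)D^{-1} = -I$ since $-I$ is central in $\mathrm{GL}(2,\realnos)$. Thus conjugation by the affine map $D$ (with zero translation part) sends the generators of $\Mu$ to generators of $\Delta$. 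Consequently $D_\sharp: \mathrm{Aff}(\Mu)\to\mathrm{Aff}(\Delta)$, $\gamma\mapsto D\gamma D^{-1}$, is a well-defined group isomorphism (its inverse is conjugation by $D^{-1}$), because conjugation by a fixed affinity of $E^2$ intertwines $N_A(\Mu)$ with $N_A(\Delta)$ and hence $\mathrm{Aff}(\Mu) = N_A(\Mu)/\Mu$ with $\mathrm{Aff}(\Delta) = N_A(\Delta)/\Delta$.

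The last assertion, $\mathrm{Sym}(\Delta) = D_\sharp(\langle \Kappa, B_\star, C_\star\rangle)$, is the part that requires the most care. The point is that the tetrahedral pillow $E^2/\Delta$ has extra symmetry coming from the $6$-fold symmetry of the hexagonal lattice: its isometry group has linear parts in $\mathrm{O}(2)\cap N_A(\langle e_1, \tfrac12 e_1 + \tfrac{\sqrt3}{2}e_2\rangle)$, which is the order-$12$ dihedral group of the hexagonal lattice. Transporting back through $D_\sharp^{-1}$, an element of $\mathrm{Sym}(\Delta)$ corresponds to an affinity of $E^2/\Mu$ whose linear part $B'$ satisfies $DB'D^{-1}\in\mathrm{O}(2)$; I would check that the $\mathrm{PGL}(2,\integers)$-images of such $B'$ are exactly $\langle \pm B, \pm C\rangle$, the order-$6$ dihedral group — concretely because $DBD^{-1}$ and $DCD^{-1}$ are the reflections generating the hexagonal point symmetry, while $A = DAD^{-1}$-type reflections are not hexagonal-lattice symmetries. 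Since the translation parts that occur are again the index-$2$-refinement cosets forming $\Kappa$ (which is $D_\sharp$-invariant as a normal subgroup, and geometrically consists of isometries of both pillows), we get $D_\sharp^{-1}(\mathrm{Sym}(\Delta)) = \langle \Kappa, B_\star, C_\star\rangle$, the order-$24$ factor of Lemma~\ref{L23}. The main obstacle I anticipate is pinning down precisely which mixed translation-plus-linear cosets lie in each symmetry group; this is where I would invoke the already-established free product decomposition of $\mathrm{Aff}(\Mu)$ in Lemma~\ref{L23} together with the torsion theorem, since any symmetry group is a finite subgroup and $\mathrm{Sym}(\Delta)$ must map to a finite subgroup of $\mathrm{Aff}(\Mu)$ containing $\Kappa$ and $C_\star$ but not $A_\star$, forcing it into the order-$24$ factor, and a dimension count ($\lvert\mathrm{Isom}(E^2/\Delta)\rvert$ as an abstract group, which is $24$) shows the inclusion is an equality.
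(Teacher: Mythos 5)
Your argument is sound in substance but takes a genuinely different route from the paper on the two symmetry-group identifications. You compute $\mathrm{Sym}(\Mu)$ and $\mathrm{Sym}(\Delta)$ directly: lift an isometry of the quotient to an isometry of $E^2$ normalizing the group (the lifting fact from Lemmas 1 and 7 of \cite{R-T-I}, which you use implicitly), observe that the linear part must lie in $\mathrm{O}(2)\cap\mathrm{GL}(2,\integers)=\langle A,C\rangle$ for $\Mu$, respectively in the order-$12$ hexagonal point group $\langle DBD^{-1},DCD^{-1}\rangle$ for $\Delta$, and that the translation part lies in the half-lattice by Lemma \ref{L20} (and its transport under $D$). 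The paper instead only verifies the easy inclusions $\langle\Kappa,A_\star,C_\star\rangle\subseteq\mathrm{Sym}(\Mu)$ and $D_\sharp(\langle\Kappa,B_\star,C_\star\rangle)\subseteq\mathrm{Sym}(\Delta)$ (the latter because $DBD^{-1},DCD^{-1}\in\mathrm{O}(2)$), notes that both symmetry groups are finite since $Z(\Mu)=\{I\}$ (Lemma 5 and Corollary 2 of \cite{R-T-I}), and concludes from the fact that the two amalgam factors of Lemma \ref{L23} are maximal finite subgroups of $\mathrm{Aff}(\Mu)$. Your route yields an explicit description of the isometries; the paper's maximality argument avoids any computation with the normalizer of $\Delta$. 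The middle assertions ($D\Mu D^{-1}=\Delta$, $D_\sharp$ an isomorphism) are handled the same way in both.

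One step, however, is defective as written: your concluding sentence for $\mathrm{Sym}(\Delta)$. The torsion theorem only says a finite subgroup is \emph{conjugate} into one of the two factors, so ``contains $\Kappa$ and $C_\star$ but not $A_\star$'' does not force $D_\sharp^{-1}(\mathrm{Sym}(\Delta))$ to be contained in $\langle\Kappa,B_\star,C_\star\rangle$; and the count $\lvert\mathrm{Isom}(E^2/\Delta)\rvert=24$ is not available at this stage --- in the paper that count is a consequence of this lemma, so invoking it here is circular. Either drop this paragraph (your direct determination of the linear and translation parts, once carried out, already gives $D_\sharp^{-1}(\mathrm{Sym}(\Delta))=\langle\Kappa,B_\star,C_\star\rangle$ with no appeal to the amalgam), or replace it by the paper's argument: $D_\sharp^{-1}(\mathrm{Sym}(\Delta))$ is a finite subgroup of $\mathrm{Aff}(\Mu)$ containing the \emph{entire} order-$24$ factor $\langle\Kappa,B_\star,C_\star\rangle$ (note $B_\star$ is included, since $DBD^{-1}=A\in\mathrm{O}(2)$), and that factor is a maximal finite subgroup by Lemma \ref{L23}, whence equality.
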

\begin{proof}
We have that $Z(\Mu) = \{I\}$ by Lemma 5 of \cite{R-T-I}, and so $\mathrm{Sym}(\Mu)$ is finite by Corollary 2 of \cite{R-T-I}. 
Now $\mathrm{Sym}(\Mu)$ is a finite subgroup of $\mathrm{Aff}(\Mu)$ that contains $\langle \Kappa, A_\star, C_\star\rangle$. 
Hence $\mathrm{Sym}(\Mu)= \langle \Kappa, A_\star, C_\star\rangle$, since $\langle \Kappa, A_\star, C_\star\rangle$ 
is a maximal finite subgroup of $\mathrm{Aff}(\Mu)$ by Lemma \ref{L23}. 

Clearly $D\Mu D^{-1}=\Delta$, and so $D_\sharp: \mathrm{Aff}(\Mu) \to \mathrm{Aff}(\Delta)$ is an isomorphism. 
Now $D_\sharp(\langle \Kappa, B_\star, C_\star\rangle)$ is a subgroup of $\mathrm{Sym}(\Delta)$, 
since $DBD^{-1}, DCD^{-1} \in \mathrm{O}(2)$.  
Hence $D_\sharp^{-1}(\mathrm{Sym}(\Delta))$ is a finite subgroup of $\mathrm{Aff}(\Mu)$ 
that contains $\langle \Kappa, B_\star, C_\star\rangle$. 
Therefore  $D_\sharp^{-1}(\mathrm{Sym}(\Delta)) = \langle \Kappa, B_\star, C_\star\rangle$, 
since $\langle \Kappa, B_\star, C_\star\rangle$ is a maximal finite subgroup of $\mathrm{Aff}(\Mu)$ by Lemma \ref{L23}. 
Hence $\mathrm{Sym}(\Delta) = D_\sharp(\langle \Kappa, B_\star, C_\star\rangle)$. 
\end{proof}

A square pillow is formed by identifying the boundaries of two congruent squares.  
The symmetry group of a square pillow is the direct product of the subgroup of order 2 generated 
by the central reflection between the two squares, 
and the subgroup of order 8 corresponding to the symmetry group of the two squares. 
A fundamental domain for the square pillow $\Box = E^2/\Mu$ is the rectangle with vertices $(0,0), (1/2,0), (1/2,1), (0, 1)$. 
This rectangle is subdivided into two congruent squares that correspond to the two sides of the pillow $\Box$. 

There are seven conjugacy classes of symmetries of order 2 of $\Box$ represented by 

\begin{enumerate}
\item  the {\it central halfturn} c-rot.\ = $(e_1/2+e_2/2+I)_\star$ about the centers of the squares, 

\item  the {\it midline halfturn} m-rot.\ = $(e_1/2+ I)_\star$ about the midpoints of opposite sides of the squares,  

\item  the {\it central reflection} c-ref.\ = $A_\star$ between the two squares, 

\item the {\it midline reflection} m-ref.\ = $(e_1/2+A)_\star$,  

\item the {\it antipodal map} 2-sym.\ = $(e_1/2+e_2/2 + A)_\star$, 

\item the {\it diagonal reflection} d-ref.\ = $C_\star$, and 

\item the {\it diagonal halfturn} d-rot.\ = $(AC)_\star$. 
\end{enumerate}

There are two conjugacy classes of symmetries of order 4 of $\Box$, 
the class of the $90^\circ$ rotation 4-rot.\ $= (e_1/2+ AC)_\star$ about the centers of the squares, 
and the class of 4-sym.\ = $(e_1/2+ C)_\star$. 

There are nine conjugacy classes of dihedral symmetry groups of order 4 of $\Box$, 
the classes of the groups $\Kappa = 
\{$idt., c-rot., m-rot., m-rot.$'$$\}$,  
$\{$idt., m-ref.,  2-sym., m-rot.$'$$\}$, 
$\{$idt.,  c-rot., d-ref., d-ref.$'$$\}$, 
$\{$idt., c-ref., m-ref., m-rot.$\}$, 
$\{$idt.,  c-rot. m-ref., m-ref.$'$$\}$, 
$\{$idt., c-ref., c-rot., 2-sym.$\}$, 
$\{$idt.,  c-rot., d-rot., d-rot.$'$$\}$, 
$\{$idt., c-ref., d-ref., d-rot.$\}$, and 
$\{$idt.,  2-sym., d-ref., d-rot.$'$$\}$. 

There are four conjugacy classes of dihedral symmetry groups of order 8 of $\Box$, 
the classes of the groups 
$\langle$m-rot., d-rot.$\rangle$, 
$\langle$m-ref., d-ref.$\rangle$, 
$\langle$m-rot., d-ref.$\rangle$, and 
$\langle$m-ref., d-rot.$\rangle$.  
Moreover 4-rot.\ = (m-rot.)(d-rot.) = (m-ref.)(d-ref.) and 4-sym.\ = (m-rot.)(d-ref.) = (m-ref.)(d-rot.). 

A tetrahedral pillow is realized by the boundary of a regular tetrahedron. 
The symmetry group of a tetrahedral pillow corresponds to the symmetric group on its four cone points (vertices). 
A fundamental domain for the tetrahedral pillow $\triangle = E^2/\Delta$ is 
the equilateral triangle with vertices $(0,0), (1,0), (1/2, \sqrt{3}/2)$. 
This triangle is subdivided into four congruent equilateral triangles that correspond 
to the four faces of a regular tetrahedron. 

There are two conjugacy classes of symmetries of order 2 of $\triangle$, 
the class of the {\it halfturn} 2-rot.\ = $(e_1/2+I)_\star$, 
with axis joining the midpoints of a pair of opposite edges of the tetrahedron, 
corresponding to the product of two disjoint transpositions of vertices, and 
the class of the {\it reflection} ref.\ = $(DBD^{-1})_\star = A_\star$ corresponding to a transposition of vertices.

There is one conjugacy class of symmetries of order 3 of $\triangle$, 
the class of the $120^\circ$ rotation 3-rot.\ = $(DBCD^{-1})_\star$ corresponding to a 3-cycle.   
There is one conjugacy class of elements of order 4 of $\triangle$, 
the class of 4-cyc.\ = $(e_1/2+ DCD^{-1})_\star$, corresponding to a 4-cycle.

There are two conjugacy classes of dihedral symmetry groups of order 4 of $\triangle$, 
the class of the group of halfturns, 
and the class of the group generated by two perpendicular reflections.

There is one conjugacy class of dihedral symmetry groups of order 6 of $\triangle$, 
the class of the stabilizer of a vertex (and the opposite face) of $\triangle$. 
There is one conjugacy class of dihedral symmetry groups of order 8 of $\triangle$, 
since all Sylow 2-subgroups of the symmetry group of $\triangle$ are conjugate.  

The square pillow $\Box$ is affinely equivalent to the tetrahedral pillow $\triangle$ by Lemma \ref{L24}.  
The symmetries m-rot.\ and 4-sym.\ of $\Box$ are conjugate by $D_\star$ to the symmetries 
2-rot.\ and 4-cyc.\ of $\triangle$ respectively. 
The affinity 3-aff.\ $ = (BC)_\star$ of $\Box$ is conjugate by $D_\star$ to the symmetry 3-rot.\ of $\triangle$. 
The group $\{$idt., c-rot, m-rot., m-rot.$'$$\}$ of symmetries of $\Box$ is conjugate by $D_\star$ to the group of halfturns 
of $\triangle$. 
The group $\{$idt., c-rot, d-ref., d-ref.$'$$\}$ of symmetries of $\Box$ is conjugate by $D_\star$ to a group of symmetries 
of $\triangle$ generated by two perpendicular reflections. 

Define the affinity 2-aff.\  of $\Box$ by 2-aff.\ = $B_\star$. 
Define the reflection ref.$'$ of $\triangle$ by ref.$'$ = $(DCD^{-1})_\star$. 
The group $\langle$2-aff., d-ref.$\rangle$ of affinities of $\Box$ is conjugate by $D_\star$ 
to the dihedral group $\langle$ref., ref.$'\rangle$ of symmetries of $\triangle$ of order 6. 
The group $\langle$m-rot., d-ref.$\rangle$ of symmetries of $\Box$ is conjugate by $D_\star$ 
to the dihedral group of symmetries $\langle$2-rot., ref.$'\rangle$ of $\triangle$ of order 8.

Every affinity of $\Box$ of finite order has order 1, 2, 3, or 4 by Lemma \ref{L23}. 
There are six conjugacy classes of affinities of $\Box$ of order 2 represented 
by m-rot., c-ref., m-ref., 2-sym., d-ref., and d-rot. 
Note that c-rot.\,and m-rot.\,are conjugate in $\mathrm{Aff}(\Mu)$, 
and 2-aff.\,and d-ref.\,are conjuate in $\mathrm{Aff}(\Mu)$, since $B$ and $C$ are 
conjugate in $\mathrm{GL}(2,\integers)$. 
There is one conjugacy classes of affinities of $\Box$ of order 3 represented by 3-aff.  
There are two conjugacy classes of affinities of $\Box$ of order 4 represented by 4-rot.\ and 4-sym. 

We represent $\mathrm{Out}(\Mu)$ by $\mathrm{Aff}(\Mu)$ via the isomorphism $\Omega: \mathrm{Aff}(\Mu) \to \mathrm{Out}(\Mu)$. 
The set $\mathrm{Isom}(C_\infty,\Mu)$ consists of ten elements corresponding to the pairs of inverse elements \{idt., idt.\}, 
\{m-rot., m-rot\}, \{c-ref., c-ref.\}, \{m-ref., m-ref.\}, \{2-sym., 2-sym.\}, \{d-ref., d-ref.\}, \{d-rot., d-rot.\}, \{3-aff., 3-aff.$^{-1}$\}, 
\{4-sym., 4-sym.$^{-1}$\}, \{4-rot., 4-rot.$^{-1}$\} of $\mathrm{Aff}(\Mu)$ by Theorem 7 of \cite{R-T-Co}. 

The set $\mathrm{Isom}(D_\infty,\Mu)$ consists of forty four elements corresponding to the remaining 
classifying pairs of elements of $\mathrm{Aff}(\Mu)$ in Table \ref{T16} by Theorem 8 of \cite{R-T-Co}. 

\begin{table}  
\begin{tabular}{rlllll}
no. & fibers & grp. & quotients & structure group action & classifying pair \\
\hline 
    3 & $(2222, \mathrm{O})$ & $C_1$ & $(2222, \mathrm{O})$ & (idt., idt.) &  \{idt., idt.\} \\
    5 & $(2222, \mathrm{O})$ & $C_2$ & $(2222, \mathrm{O})$ & (m-rot., 2-rot.) & \{m-rot., m-rot.\} \\
  10 & $(2222, \mathrm{I})$ & $C_1$ & $(2222, \mathrm{I})$ & (idt., idt.)  &  \{idt., idt.\} \\
  12 & $(2222, \mathrm{I})$ & $D_1$ & $(2222, \mathrm{I})$ & (m-rot., ref.) & \{idt., m-rot.\} \\
  13 & $(2222, \mathrm{O})$ & $ C_2$ & $(2222, \mathrm{I})$ & (m-rot., ref.) & \{m-rot., m-rot.\} \\
  15 & $(2222, \mathrm{O})$ & $D_2$ & $(2222, \mathrm{I})$ & (c-rot., ref.), (m-rot.$'$, 2-rot.) & \{c-rot., m-rot.\} \\
  16 & $(2222, \mathrm{O})$ & $C_2$ & $(\ast 2222, \mathrm{I})$ & (c-ref., ref.) &  \{c-ref., c-ref.\} \\
  17 & $(2222, \mathrm{O})$  & $C_2$ & $(22\ast, \mathrm{I})$ & (m-ref., ref.) & \{m-ref., m-ref.\} \\
  18 & $(2222, \mathrm{O})$ & $C_2$ & $(22\times, \mathrm{I})$ & (2-sym., ref.) & \{2-sym., 2-sym.\} \\
  20 & $(2222, \mathrm{O})$ & $D_2$ & $(22\ast, \mathrm{I})$ & (2-sym., ref.), (m-rot.$'$, 2-rot.)  & \{2-sym., m-ref.\} \\
  21 & $(2222, \mathrm{O})$ & $C_2$ & $(2{\ast}22, \mathrm{I})$ & (d-ref., ref.) &  \{d-ref., d-ref.\} \\
  21 & $(2222, \mathrm{O})$ & $D_2$ & $(\ast 2222, \mathrm{I})$ & (c-ref., ref.), (m-rot., 2-rot.) & \{c-ref., m-ref.\} \\
  22 & $(2222, \mathrm{O})$ & $D_2$ & $(\ast 2222, \mathrm{I})$ & (d-ref., ref.), (c-rot., 2-rot.) &  \{d-ref., d-ref.$'$\}  \\
  23 & $(2222, \mathrm{O})$ & $D_2$ & $(2{\ast}22, \mathrm{I})$ & (c-ref., ref.), (c-rot., 2-rot.) & \{c-ref., 2-sym.\} \\
  24 & $(2222, \mathrm{O})$ & $D_2$ & $(2{\ast}22, \mathrm{I})$ & (m-ref., ref.), (c-rot., 2-rot.) & \{m-ref., m-ref.$'$\} \\
  27 & $(2222, \mathrm{O})$ & $C_2$ & $(\ast 2222, \mathrm{O})$ & (c-ref., 2-rot.) &  \{c-ref., c-ref.\} \\
  30 & $(2222, \mathrm{O})$ & $C_2$ & $(22\ast, \mathrm{O})$ & (m-ref., 2-rot.) & \{m-ref., m-ref.\} \\
  34 & $(2222, \mathrm{O})$ & $C_2$ & $(22\times, \mathrm{O})$ & (2-sym., 2-rot.) & \{2-sym., 2-sym.\} \\
  37 & $(2222, \mathrm{O})$ & $C_2$ & $(2{\ast}22, \mathrm{O})$ & (d-ref., 2-rot.) &  \{d-ref., d-ref.\} \\
  43 & $(2222, \mathrm{O})$ & $C_4$ & $(22\times, \mathrm{O})$ & (4-sym., 4-rot.) & \{4-sym., 4-sym.$^{-1}$\} \\
  48 & $(2222, \mathrm{O})$ & $D_2$ & $(2{\ast}22, \mathrm{I})$ & (c-rot., ref.), (2-sym., 2-rot.) &  \{c-rot., c-ref.\} \\
  49 & $(2222, \mathrm{I})$ & $D_1$ & $(\ast 2222, \mathrm{I})$ & (c-ref., ref.) & \{idt., c-ref.\} \\
  50 & $(2222, \mathrm{O})$ & $D_2$ & $(\ast 2222, \mathrm{I})$ & (c-ref., ref.), (m-ref., 2-rot.) & \{c-ref., m-rot.\} \\ 
  52 & $(2222, \mathrm{O})$ & $D_2$ & $(22\ast, \mathrm{I})$ & (m-ref., ref.), (2-sym., 2-rot.) &  \{m-ref., m-rot.$'$\} \\
  52 & $(2222, \mathrm{O})$ & $D_2$ & $(2{\ast}22, \mathrm{I})$ & (c-rot., ref.), (m-ref.$'$, 2-rot.) & \{c-rot., m-ref.\} \\
  53 & $(2222, \mathrm{I})$  & $D_1$ & $(22\ast, \mathrm{I})$ & (m-ref., ref.) &  \{idt., m-ref.\} \\
  54 & $(2222, \mathrm{O})$ & $D_2$ & $(\ast 2222, \mathrm{I})$ & (m-ref., ref.), (c-ref., 2-rot.) &  \{m-ref., m-rot.\} \\
  56 & $(2222, \mathrm{O})$ & $D_2$ & $(2{\ast}22, \mathrm{I})$ & (c-rot., ref.), (c-ref., 2-rot.) & \{c-rot., 2-sym.\} \\
  58 & $(2222, \mathrm{I})$ & $D_1$ & $(22\times, \mathrm{I})$ & (2-sym., ref.) & \{idt., 2-sym.\} \\
  60 & $(2222, \mathrm{O})$ & $D_2$ & $(22\ast, \mathrm{I})$ & (2-sym., ref.), (m-ref., 2-rot.) &  \{2-sym., m-rot.$'$\} \\
  66 & $(2222, \mathrm{I})$ & $D_1$ & $(2{\ast}22, \mathrm{I})$ & (d-ref., ref.) &  \{idt., d-ref.\} \\
  68 & $(2222, \mathrm{O})$ & $D_2$ & $(\ast 2222, \mathrm{I})$ & (c-rot., ref.), (d-ref.$'$, 2-rot.) & \{c-rot., d-ref.\} \\
  70 & $(2222, \mathrm{O})$ & $D_4$ & $(2{\ast}22, \mathrm{I})$ & (m-rot., ref.), (4-sym., 4-rot.) & \{m-rot., d-ref.\} \\
  77 & $(2222, \mathrm{O})$ & $C_2$ & $(442, \mathrm{O})$ & (d-rot., 2-rot.) & \{d-rot., d-rot.\} \\
  80 & $(2222, \mathrm{O})$ & $C_4$ & $(442, \mathrm{O})$ & (4-rot., 4-rot.) & \{4-rot., 4-rot.$^{-1}$\} \\
  81 & $(2222, \mathrm{O})$ & $C_2$ & $(442, \mathrm{I})$ & (d-rot., ref.) &  \{d-rot., d-rot.\} \\
  82 & $(2222, \mathrm{O})$ & $D_2$ & $(442, \mathrm{I})$ & (d-rot., ref.), (c-rot., 2-rot.) & \{d-rot., d-rot.$'$\} \\
  84 & $(2222, \mathrm{I})$ & $D_1$ & $(442, \mathrm{I})$ & (d-rot., ref.) & \{idt., d-rot.\} \\
  86 & $(2222, \mathrm{O})$ & $D_2$ & $(442, \mathrm{I})$ & (c-rot., ref.), (d-rot.$'$, 2-rot.) & \{c-rot., d-rot.\} \\
  88 & $(2222, \mathrm{O})$ & $D_4$ & $(442, \mathrm{I})$ & (m-rot., ref.), (4-rot., 4-rot.) & \{m-rot., d-rot.\} \\
  93 & $(2222, \mathrm{O})$ & $D_2$ & $(\ast 442, \mathrm{I})$ & (c-ref., ref.), (d-rot., 2-rot.) & \{c-ref., d-ref.\} \\
  94 & $(2222, \mathrm{O})$ & $D_2$ & $(4{\ast}2, \mathrm{I})$ & (2-sym., ref.), (d-rot.$'$, 2-rot.) & \{2-sym., d-ref.\} \\
  98 & $(2222, \mathrm{O})$ & $D_4$ & $(\ast 442, \mathrm{I})$ & (m-ref., ref.), (4-rot., 4-rot.) &  \{m-ref, d-ref.\} \\
112 & $(2222, \mathrm{O})$ & $D_2$ & $(\ast 442, \mathrm{I})$ & (c-ref., ref.), (d-ref., 2-rot.) & \{c-ref., d-rot.\} \\
114 & $(2222, \mathrm{O})$ & $D_2$ & $(4{\ast}2, \mathrm{I})$ & (2-sym., ref.), (d-ref., 2-rot.) & \{2-sym, d-rot.$'$\} \\
116 & $(2222, \mathrm{O})$ & $D_2$ & $(\ast 442, \mathrm{I})$ & (d-ref., ref.), (c-ref., 2-rot.) & \{d-ref., d-rot.\} \\
118 & $(2222, \mathrm{O})$ & $D_2$ & $(4{\ast}2, \mathrm{I})$ & (d-ref., ref.), (2-sym., 2-rot.) & \{d-ref., d-rot.$'$\} \\
122 & $(2222, \mathrm{O})$ & $D_4$ & $(4{\ast}2, \mathrm{I})$ & (m-ref., ref.), (4-sym., 4-rot.) &  \{m-ref., d-rot.\} \\
171 & $(2222, \mathrm{O})$ & $C_3$ & $(632, \mathrm{O})$ & (3-rot., 3-rot.) & \{3-aff., 3-aff.$^{-1}$\} \\
180 & $(2222, \mathrm{O})$ & $D_3$ & $(\ast632, \mathrm{I})$ & (ref., ref.), (3-rot., 3-rot.) & \{2-aff., d-ref.\}
\end{tabular}

\medskip
\caption{The classification of the co-Seifert fibrations of 3-space groups 
whose generic fiber is of type $2222$ with IT number 2}\label{T16}
\end{table}

\section{Generic Fiber $\circ$ (torus) with IT number 1} 

Let $\Mu = \langle e_1+I, e_2+I\rangle$ where $e_1, e_2$ are the standard basis vectors of $E^2$. 
Then $\Mu$ is a 2-space group with IT number 1. The Conway notation for $\Mu$ is $\circ$ and the IT notation is $p1$. 
The flat orbifold $E^2/\Mu$ is a {\it square torus} $\Box$.  
A flat 2-orbifold affinely equivalent to $\Box$ is called a {\it torus}. 
A torus is orientable. 

\begin{lemma}\label{L25}  
If $\Mu = \langle e_1+I, e_2+I\rangle$, then 
$\mathrm{Out}(\Mu)= \mathrm{Aut}(\Mu) = \mathrm{GL}(2,\integers)$, and  
$$N_A(\Mu) = \left\{a  + A :  a \in E^2\ \hbox{and}\ A \in \mathrm{GL}(2,\integers)\right\}.$$
\end{lemma}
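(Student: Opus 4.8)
The plan is to reduce every assertion to the single observation that $E^2/\Mu$ is a torus; that is, $\Mu$ is the pure translation lattice, so $\Mu$ is free abelian of rank $2$ with the displayed elements $e_1+I$ and $e_2+I$ as a basis, and every element of $\Mu$ has the form $v+I$ with $v \in \integers^2$.

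\textbf{Step 1 (the automorphism and outer automorphism groups).} Since $\Mu$ is free abelian of rank $2$ with basis $\{e_1+I,\,e_2+I\}$, an endomorphism of $\Mu$ is recorded by an integer $2\times 2$ matrix, and it is an automorphism exactly when that matrix lies in $\mathrm{GL}(2,\integers)$; conversely every matrix in $\mathrm{GL}(2,\integers)$ determines an automorphism. Hence $\mathrm{Aut}(\Mu) = \mathrm{GL}(2,\integers)$. As $\Mu$ is abelian, $\mathrm{Inn}(\Mu)$ is trivial, so $\mathrm{Out}(\Mu) = \mathrm{Aut}(\Mu)/\mathrm{Inn}(\Mu) = \mathrm{Aut}(\Mu) = \mathrm{GL}(2,\integers)$.

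\textbf{Step 2 (the normalizer).} I would take an arbitrary affinity $a+A$ of $E^2$, with $a \in E^2$ and $A \in \mathrm{GL}(2,\realnos)$, and compute its conjugation action on a translation $v+I \in \Mu$, $v \in \integers^2$. Composing affine maps (with the paper's convention $a+A \colon x \mapsto Ax+a$, so $(a+A)^{-1} = (-A^{-1}a)+A^{-1}$) gives $(a+A)(v+I)(a+A)^{-1} = Av+I$; in particular the translation part $a$ cancels. Therefore $a+A \in N_A(\Mu)$ if and only if the linear map $A$ carries $\integers^2$ onto itself, i.e. $A\integers^2 \subseteq \integers^2$ and $A^{-1}\integers^2 \subseteq \integers^2$, which is precisely the condition $A \in \mathrm{GL}(2,\integers)$; the vector $a$ is otherwise unconstrained. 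This gives $N_A(\Mu) = \{a+A : a \in E^2,\ A \in \mathrm{GL}(2,\integers)\}$, as claimed.

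I do not anticipate any real obstacle: the only substantive content is the elementary conjugation computation in Step 2, and the two points that deserve a moment's care are the convention for composition in $\mathrm{Aff}(E^2)$ and the fact that conjugation by $a+A$ must map $\Mu$ \emph{onto} $\Mu$, so one needs $A$ to preserve $\integers^2$ in both directions rather than only $A\integers^2\subseteq\integers^2$. One could instead derive the normalizer description from the general results of \cite{R-T-I} (Lemmas 7--9) invoked in the earlier lemmas, but the direct argument is shorter and keeps the torus case self-contained.
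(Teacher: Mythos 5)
Your proposal is correct and follows essentially the same route as the paper: identify $\mathrm{Aut}(\Mu)=\mathrm{GL}(2,\integers)$ via the lattice basis with $\mathrm{Out}=\mathrm{Aut}$ by commutativity, and note that conjugating a translation $v+I$ by $a+A$ yields $Av+I$, so membership in $N_A(\Mu)$ depends only on $A$ preserving $\integers^2$, i.e.\ $A\in\mathrm{GL}(2,\integers)$. The paper's proof is just a terser version of the same computation.
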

\begin{proof}
We have that $\mathrm{Out}(\Mu)= \mathrm{Aut}(\Mu)$, since $\Mu$ is abelian, 
and $\mathrm{Aut}(\Mu) = \mathrm{GL}(2,\integers)$, since every automorphism of $\Mu$ 
extends to a unique linear automorphism of $E^2$ corresponding to an element of $\mathrm{GL}(2,\integers)$. 

Observe that $a+ A \in N_A(\Mu)$ if and only if $A \in N_A(\langle e_1+I, e_2+I\rangle)$. 
Hence $a+ A \in N_A(\Mu)$ if and only if  $A \in \mathrm{GL}(2, \integers)$. 
\end{proof}

\begin{lemma}\label{L26} 
Let $\Mu = \langle e_1+I, e_2+I\rangle$. 
The group $\mathrm{Aff}(\Mu)$ has a normal, infinite, abelian subgroup $\Kappa = \{(a+I)_\star: a \in E^2\}$. 
The map $\eta: \mathrm{Aff}(\Mu) \to \mathrm{GL}(2,\integers)$, defined by 
$\eta((a+A)_\star) = A$ for each $A \in \mathrm{GL}(2,\integers)$, 
is an epimorphism with kernel $\Kappa$. 
The map $\sigma:  \mathrm{GL}(2,\integers) \to \mathrm{Aff}(\Mu)$, defined by $\sigma(A) = A_\star$, 
is a monomorphism,  and $\sigma$ is a right inverse of $\eta$. 
\end{lemma}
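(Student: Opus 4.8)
The plan is to mirror the proof of Lemma \ref{L21}, exploiting that $\mathrm{Aff}(\Mu)$ is controlled by the normalizer $N_A(\Mu)$ computed in Lemma \ref{L25}; the argument is in fact simpler here because $-I \notin \Mu$, so the point group acts on $E^2/\Mu$ without any further identifications. By Lemma 7 of \cite{R-T-I}, the natural map $N_A(\Mu) \to \mathrm{Aff}(\Mu)$, sending $a+A$ to $(a+A)_\star$, is an epimorphism with kernel $\Mu$, so $\mathrm{Aff}(\Mu) \cong N_A(\Mu)/\Mu$; and by Lemma \ref{L25} every affinity of $E^2/\Mu$ has the form $(a+A)_\star$ with $a \in E^2$ and $A \in \mathrm{GL}(2,\integers)$. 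I would flag at the outset that, in contrast to Lemmas \ref{L21}--\ref{L24}, the group $\Mu$ is abelian, so $Z(\Mu) = \Mu$ is nontrivial and $\Omega$ has infinite kernel; this is the source of the infiniteness of $\Kappa$.

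Next I would show that $\eta$ is well defined and identify its kernel. The homomorphism $N_A(\Mu) \to \mathrm{GL}(2,\integers)$ given by $a+A \mapsto A$ has image the point group $\Pi_A = \mathrm{GL}(2,\integers)$ by Lemma \ref{L25}, and its restriction to $\Mu$ is trivial since every element of $\Mu$ is a pure translation; hence it descends to an epimorphism $\eta: \mathrm{Aff}(\Mu) = N_A(\Mu)/\Mu \to \mathrm{GL}(2,\integers)$ with $\eta((a+A)_\star) = A$ (cf. Lemma 9 of \cite{R-T-I}). An affinity $(a+A)_\star$ lies in $\ker\eta$ exactly when $A = I$, so $\ker\eta = \{(a+I)_\star : a \in E^2\} = \Kappa$. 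Consequently $\Kappa$ is a normal subgroup of $\mathrm{Aff}(\Mu)$; it is abelian because $(a+I)_\star(b+I)_\star = ((a+b)+I)_\star = (b+I)_\star(a+I)_\star$; and it is infinite because two translations $a+I$ and $b+I$ induce the same affinity of $E^2/\Mu$ only when $a-b \in \integers^2$, so $\Kappa$ is (isomorphic to) the compact torus group $E^2/\integers^2$.

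Finally I would check the assertions about $\sigma$. For $A \in \mathrm{GL}(2,\integers)$ the linear map $A = 0+A$ lies in $N_A(\Mu)$ by Lemma \ref{L25}, so $A_\star \in \mathrm{Aff}(\Mu)$; and $\sigma(AB) = (AB)_\star = A_\star B_\star = \sigma(A)\sigma(B)$ since $N_A(\Mu) \to \mathrm{Aff}(\Mu)$ is a homomorphism, so $\sigma$ is a well-defined homomorphism. Since $\eta(\sigma(A)) = \eta(A_\star) = A$, the map $\sigma$ is a right inverse of $\eta$, and hence a monomorphism. There is no hard computation in any of this; the one point to get right is the bookkeeping that separates this case from the $2222$ case of Lemma \ref{L21}, namely that the translational part of $N_A(\Mu)$ is all of $E^2$ rather than a lattice in $E^2$, so that $\Kappa$ comes out as the infinite abelian group $E^2/\integers^2$ instead of a finite dihedral group.
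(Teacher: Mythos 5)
Your proposal is correct and follows essentially the same route as the paper: both identify $\mathrm{Aff}(\Mu)$ with $N_A(\Mu)/\Mu$ via Lemma 7 of \cite{R-T-I}, use Lemma \ref{L25} to see that the linear-part projection onto $\mathrm{GL}(2,\integers)$ descends to the epimorphism $\eta$ with kernel the translation subgroup $\Kappa$, and observe that $\sigma$ is a well-defined homomorphism that is injective because it is a right inverse of $\eta$. Your added remarks (the contrast with Lemma \ref{L21} and the identification $\Kappa\cong E^2/\integers^2$) are accurate but not needed beyond what the paper records.
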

\begin{proof}
The map $\Phi: N_A(\Mu) \to \mathrm{Aff}(\Mu)$, defined by $\Phi(a+A) = (a+A)_\star$,  
is an epimorphism with kernel $\Mu$ by Lemma 7 of \cite{R-T-I}. 
Let $\Tau$ be the translation subgroup of $N_A(\Mu)$. 
Then $\Phi$ maps the normal subgroup $\Tau/\Mu$ of $N_A(\Mu)/\Mu$ 
onto the normal subgroup $\Kappa$ of $\mathrm{Aff}(\Mu)$. 
We have that 
$$(N_A(\Mu)/\Mu)/(\Tau/\Mu)= N_A(\Mu)/\Tau = \mathrm{GL}(2,\integers).$$
Hence $\eta: \mathrm{Aff}(\Mu) \to \mathrm{GL}(2,\integers)$ is an epimorphism with kernel $\Kappa$. 
The map $\sigma:  \mathrm{GL}(2,\integers) \to \mathrm{Aff}(\Mu)$ 
is a well-defined homomorphism by Lemma 7 of \cite{R-T-I}. 
The map $\sigma$ is a monomorphism, since $\sigma$ is a right inverse of $\eta$.  
\end{proof}

\begin{lemma}\label{L27} 
Let 
$$A = \left(\begin{array}{rr} -1 & 0 \\ 0 & 1 \end{array}\right),\quad  B = \left(\begin{array}{rr} -1 & 1 \\ 0 & 1 \end{array}\right), 
\quad C = \left(\begin{array}{rr} 0 & 1 \\ 1 & 0 \end{array}\right). $$
The group $\mathrm{GL}(2,\integers)$ is the free product of the dihedral subgroup $\langle A,  C\rangle$ of order 8 
and the dihedral subgroup $\langle B,  C \rangle$ of order 12 amalgamated 
along the dihedral subgroup $\langle -I, C \rangle$ of order 4. 
Every finite subgroup of  $\mathrm{GL}(2,\integers)$ is conjugate to a subgroup 
of either $\langle A,  C\rangle$ or $\langle B,  C \rangle$. 
\end{lemma}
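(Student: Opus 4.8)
The plan is to derive the amalgamated free product decomposition of $\mathrm{GL}(2,\integers)$ from the decomposition of $\mathrm{PGL}(2,\integers)$ in Lemma \ref{L22} by passing to the central extension
$$1 \to \{\pm I\} \to \mathrm{GL}(2,\integers) \to \mathrm{PGL}(2,\integers) \to 1$$
and using the induced action on the Bass--Serre tree, in exactly the manner of the proof of Lemma \ref{L23}. (A direct argument via Poincar\'e's polygon theorem on $H^2$ as in Lemma \ref{L22} is not available here, since $-I$ acts trivially on $H^2$, which is why the central extension viewpoint is the natural one.)

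First I would record the relevant orders. The matrices $A, B, C$ each have order $2$, and one computes directly that $(AC)^2 = -I$ and $(BC)^3 = -I$. Hence $AC$ has order $4$ and $BC$ has order $6$, so $\langle A, C\rangle$ is dihedral of order $8$, $\langle B, C\rangle$ is dihedral of order $12$, and $\langle -I, C\rangle$ is dihedral of order $4$; moreover $-I \in \langle A, C\rangle \cap \langle B, C\rangle$, so $\langle -I, C\rangle$ is a common subgroup of the first two.

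Next, let $p \colon \mathrm{GL}(2,\integers) \to \mathrm{PGL}(2,\integers)$ be the quotient map. By Lemma \ref{L22}, $\mathrm{PGL}(2,\integers)$ is the free product of $\langle \pm A, \pm C\rangle$ and $\langle \pm B, \pm C\rangle$ amalgamated along $\langle \pm C\rangle$; let $T$ be the associated Bass--Serre tree, on which $\mathrm{PGL}(2,\integers)$ acts without inversions with quotient a single edge, the vertex stabilizers being the two dihedral factors and the edge stabilizer being $\langle \pm C\rangle$. Composing with $p$ makes $\mathrm{GL}(2,\integers)$ act on $T$; this action is still without inversions and has quotient a single edge, because it factors through $\mathrm{PGL}(2,\integers)$. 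The stabilizer in $\mathrm{GL}(2,\integers)$ of a vertex or edge of $T$ is the full $p$-preimage of the stabilizer of its image, and since $-I$ lies in each of the three subgroups above, these preimages are $p^{-1}(\langle \pm A, \pm C\rangle) = \langle A, C\rangle$, $p^{-1}(\langle \pm B, \pm C\rangle) = \langle B, C\rangle$, and $p^{-1}(\langle \pm C\rangle) = \langle -I, C\rangle$. By the fundamental theorem of Bass--Serre theory, $\mathrm{GL}(2,\integers)$ is then the fundamental group of the resulting graph of groups over a single edge, that is, the free product of $\langle A, C\rangle$ and $\langle B, C\rangle$ amalgamated along $\langle -I, C\rangle$, as claimed.

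Finally, the statement on finite subgroups follows from the torsion theorem for amalgamated free products, exactly as in Lemma \ref{L22}: a finite subgroup of $\langle A, C\rangle \ast_{\langle -I, C\rangle} \langle B, C\rangle$ fixes a point of $T$, hence is conjugate into one of the two vertex groups $\langle A, C\rangle$ or $\langle B, C\rangle$. I expect the only genuinely delicate point to be the verification that the $p$-preimages of the two free factors are the subgroups $\langle A, C\rangle$ and $\langle B, C\rangle$ themselves rather than larger groups, which is precisely where $-I \in \langle A, C\rangle \cap \langle B, C\rangle$ is used; the remainder is a direct transcription of arguments already carried out for $\mathrm{PGL}(2,\integers)$ and $\mathrm{Aff}(\Mu)$.
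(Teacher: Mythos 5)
Your proposal is correct and follows essentially the same route as the paper: pulling back the amalgamated decomposition of $\mathrm{PGL}(2,\integers)$ from Lemma \ref{L22} through the projection with kernel $\{\pm I\}$, letting $\mathrm{GL}(2,\integers)$ act on the Bass--Serre tree, identifying the preimages of the factors as $\langle A, C\rangle$, $\langle B, C\rangle$, $\langle -I, C\rangle$ via $(AC)^2 = -I$ and $(BC)^3 = -I$, and invoking the torsion theorem for the statement on finite subgroups. Your write-up simply spells out a few of the Bass--Serre details that the paper leaves implicit.
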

\begin{proof}
Let $\pi: \mathrm{GL}(2,\integers) \to \mathrm{PGL}(2,\integers)$ be the natural projection defined by $\pi(A) = \pm A$.  
Then $\mathrm{GL}(2,\integers)$ acts on the Bass-Serre tree of the amalgamated product decomposition of $\mathrm{PGL}(2,\integers)$, given in Lemma \ref{L22}, via $\pi$. 
Hence $\mathrm{GL}(2,\integers)$ is the amalgamated product of the groups
$\pi^{-1}(\langle \pm A, \pm C\rangle) = \langle A, C\rangle$ 
and $\pi^{-1}(\langle \pm B, \pm C\rangle) = \langle B, C \rangle$
along the subgroup $\pi^{-1}(\langle \pm C\rangle) = \langle -I, C \rangle$, since $\mathrm{ker}(\pi) =\{\pm I\}$, and $(AC)^2 = -I$ and $(BC)^3 = -I$.  

By the torsion theorem for amalgamated products of groups, every finite subgroup of $\mathrm{GL}(2,\integers)$ 
is conjugate to a subgroup of either $\langle A, C\rangle$ or $\langle B, C\rangle$. 
\end{proof}

\begin{lemma}\label{L28} 
The group $\mathrm{GL}(2,\integers)$ has seven conjugacy classes of elements of finite order,  
the class of $I$, three conjugacy classes of elements of order 2, 
represented by $-I, A, C$, one conjugacy class of elements of order 3 represented by $(BC)^2$, 
one conjugacy class of elements of order 4 represented by $AC$, 
and one conjugacy class of element of order 6 represented by $BC$. 
\end{lemma}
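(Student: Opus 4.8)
The plan is to exploit the amalgamated free product decomposition $\mathrm{GL}(2,\integers) = \langle A, C\rangle \ast_{\langle -I, C\rangle} \langle B, C\rangle$ from Lemma \ref{L27}, together with the accompanying torsion theorem statement: every finite subgroup of $\mathrm{GL}(2,\integers)$ — in particular every finite cyclic subgroup, hence every element of finite order — is conjugate into one of the two vertex groups $\langle A, C\rangle$ (dihedral of order 8) or $\langle B, C\rangle$ (dihedral of order 12). So the first step is to reduce the classification of elements of finite order up to conjugacy in $\mathrm{GL}(2,\integers)$ to: (i) enumerating conjugacy classes of elements within each of the two finite dihedral groups, and (ii) determining which of these fuse together under conjugation in the ambient group $\mathrm{GL}(2,\integers)$.

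For step (i): in the dihedral group $\langle A, C\rangle$ of order 8, write $r = AC$ (order 4) and note $r^2 = -I$; the elements are $I, r, r^2 = -I, r^3$ and the four reflections $A, C, AC^2\!\cdot\!(\ldots)$, i.e. the two reflection classes $\{A, -A\}$-type and $\{C, -C\}$-type, giving finite-order elements of orders $1,2,4$. Concretely its conjugacy classes are: $\{I\}$, $\{-I\}$, the class of $r$ and $r^3 = r^{-1}$ together (order 4), and two classes of reflections (order 2), one containing $A$ and one containing $C$. In the dihedral group $\langle B, C\rangle$ of order 12, write $s = BC$ (order 6) with $s^3 = -I$; the rotation classes are $\{I\}$, $\{-I\}$, $\{s, s^{-1}\}$ (order 6), $\{s^2, s^{-2}\}$ (order 3), and — since $6$ is even — two reflection classes, which we can take to be represented by $B$ and $C$. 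So the pooled list of candidate representatives is $I$; order 2: $-I, A, C, B$; order 3: $(BC)^2$; order 4: $AC$; order 6: $BC$.

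For step (ii), the fusion step — which I expect to be the main obstacle — I must decide which of $-I, A, B, C$ are conjugate in $\mathrm{GL}(2,\integers)$, and similarly check there is no further collapsing among the order 3, 4, 6 classes. The element $-I$ is central, so it is alone in its class. For the order 2 reflections: $C$ and $B$ are already noted to be conjugate in $\mathrm{GL}(2,\integers)$ in the discussion preceding Lemma \ref{L22} (indeed $B$ and $C$ are conjugate in $\mathrm{GL}(2,\integers)$; one can exhibit a conjugating matrix directly, or argue via traces and the theory of integral binary forms). Meanwhile $A$ is not conjugate to $C$: this can be seen either by a normal-form argument for order-2 elements of $\mathrm{GL}(2,\integers)$ (the conjugacy class of an involution $M$ with $\mathrm{tr}(M)=0$, $\det M = -1$ is determined by the $\integers$-module structure of $\integers^2$ under $M$, equivalently by whether $M$ is $\pm$ a permutation-type reflection fixing a primitive vector versus one whose fixed line is "diagonal"), or more cheaply by passing to $\mathrm{PGL}(2,\integers)$ and using that $R_1$ and $R_2$ in Lemma \ref{L22} are reflections in distinct sides of the fundamental triangle $T$ and hence non-conjugate in the reflection group (a conjugacy in $\mathrm{PGL}(2,\integers)$ would carry one wall of $T$ to the other, impossible since their dihedral angles with the third wall differ: $R_1R_2$ has order 2 while $R_2R_3$ has order 3). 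So among the involutions exactly three classes survive: $-I$, $A$, $C$. For orders 3, 4, 6 there is nothing to fuse — a single dihedral group contributes a single class of order 3 (from $\langle B,C\rangle$), a single class of order 4 (from $\langle A,C\rangle$), and a single class of order 6 (from $\langle B,C\rangle$), and these cannot coincide with classes of elements of different order, nor with each other. Finally I note $\langle A,C\rangle$ contains no element of order 3 or 6 and $\langle B,C\rangle$ contains no element of order 4, so every finite order arising is among $1,2,3,4,6$, and collecting the surviving representatives gives exactly the seven classes: $I$; $-I, A, C$; $(BC)^2$; $AC$; $BC$. This completes the proof.
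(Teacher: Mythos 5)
Your proposal is correct and takes essentially the same route as the paper: you reduce via the torsion theorem for the amalgam of Lemma \ref{L27} to the two dihedral vertex groups $\langle A,C\rangle$ and $\langle B,C\rangle$, list their conjugacy classes, and then analyze fusion ($-I$ is central, $B$ is conjugate to $C$ by an explicit integral matrix, $A$ is not conjugate to $C$), which is exactly the paper's argument, stated there even more tersely (the paper dismisses $A\not\sim C$ with ``a simple proof by contradiction'' and just exhibits the conjugating matrix for $B\sim C$). Your primary eigenlattice/module argument for $A\not\sim C$ is sound; only your optional asides are shaky --- the wall-angle argument in $\mathrm{PGL}(2,\integers)$ is stated too loosely to stand alone (a conjugating element need not carry the triangle $T$ to itself, so one must compare the geometry along entire walls or invoke the odd-label criterion for Coxeter generators), and ``traces'' cannot decide integral conjugacy here since $A$ and $C$ share trace and determinant --- but neither aside is needed for your proof.
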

\begin{proof} By Lemma \ref{L27}, an element of $\mathrm{GL}(2,\integers)$ of finite order 
is conjugate to either $I, -I, A, C, AC, B, BC$ or $(BC)^2$.  
As $-I$ commutes with every element of $\mathrm{GL}(2,\integers)$, 
we have that $-I$ is conjugate to only itself. 
The element $A$ is not conjugate to $C$ by a simple proof by contradiction. 
The matrices $B$ and $C$ are conjugate in $\mathrm{GL}(2,\integers)$, since 
$$\left(\begin{array}{rr} 1 & 0 \\ 1 & 1 \end{array}\right)\left(\begin{array}{rr} 0 & 1 \\ 1 & 0 \end{array}\right) 
=\left(\begin{array}{rr} -1 & 1 \\ 0 & 1 \end{array}\right)\left(\begin{array}{rr} 1 & 0 \\ 1 & 1 \end{array}\right).$$
\vspace{-.3in}
\par\end{proof}

\begin{lemma}\label{L29} 
Let $\Mu = \langle e_1+I, e_2+I\rangle$, let $\Kappa = \{(a+I)_\star: a \in E^2\}$, and  
let $A, B, C$ be defined as in Lemma \ref{L27}. 
The group $\mathrm{Aff}(\Mu)$  is the free product of the subgroup $\langle \Kappa, A_\star, C_\star\rangle$ 
and the subgroup $\langle \Kappa, B_\star, C_\star\rangle$ amalgamated 
along the subgroup $\langle \Kappa,  (-I)_\star, C_\star\rangle$.  
Every finite subgroup of $\mathrm{Aff}(\Mu)$ is conjugate to a subgroup of either
$\langle \Kappa, A_\star, C_\star\rangle$ or $\langle \Kappa, B_\star, C_\star\rangle$. 
\end{lemma}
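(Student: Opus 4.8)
The plan is to follow the proof of Lemma~\ref{L23} almost verbatim, the only change being that the kernel $\Kappa$ of $\eta$ is now the infinite translation subgroup rather than a finite group. By Lemma~\ref{L26}, $\eta:\mathrm{Aff}(\Mu)\to\mathrm{GL}(2,\integers)$, $\eta((a+A)_\star)=A$, is an epimorphism with kernel $\Kappa$. By Lemma~\ref{L27}, $\mathrm{GL}(2,\integers)$ is the free product of $\langle A,C\rangle$ and $\langle B,C\rangle$ amalgamated along $\langle -I,C\rangle$. Let $T$ be the Bass--Serre tree of this splitting; its quotient graph is a single edge $e$ with endpoints $P,Q$, where $\mathrm{GL}(2,\integers)_P=\langle A,C\rangle$, $\mathrm{GL}(2,\integers)_Q=\langle B,C\rangle$, and $\mathrm{GL}(2,\integers)_e=\langle -I,C\rangle$. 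Since $\langle A,C\rangle$ and $\langle B,C\rangle$ have orders $8$ and $12$, the vertices $P$ and $Q$ lie in distinct $\mathrm{GL}(2,\integers)$-orbits, so the action is without inversions.

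First I would let $\mathrm{Aff}(\Mu)$ act on $T$ through $\eta$. This action is again without inversions (an element inverts an edge if and only if its $\eta$-image does), and its quotient graph is still the segment $e$. The stabilizer in $\mathrm{Aff}(\Mu)$ of a vertex or edge is the full $\eta$-preimage of the corresponding stabilizer in $\mathrm{GL}(2,\integers)$; because $\ker\eta=\Kappa$ and $\eta(A_\star)=A$, $\eta(B_\star)=B$, $\eta(C_\star)=C$, $\eta((-I)_\star)=-I$, these preimages are precisely $\langle\Kappa,A_\star,C_\star\rangle$, $\langle\Kappa,B_\star,C_\star\rangle$, and $\langle\Kappa,(-I)_\star,C_\star\rangle$. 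By the fundamental structure theorem of Bass--Serre theory for a group acting without inversions on a tree with quotient a single edge, $\mathrm{Aff}(\Mu)$ is the free product of $\langle\Kappa,A_\star,C_\star\rangle$ and $\langle\Kappa,B_\star,C_\star\rangle$ amalgamated along $\langle\Kappa,(-I)_\star,C_\star\rangle$.

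For the last assertion I would invoke the torsion theorem for amalgamated products, exactly as in Lemmas~\ref{L22} and~\ref{L27}: a finite subgroup $F$ of $\mathrm{Aff}(\Mu)$ acts on $T$ without inversions and, being finite, fixes a vertex, hence lies in a vertex stabilizer; each vertex stabilizer is an $\mathrm{Aff}(\Mu)$-conjugate of $\langle\Kappa,A_\star,C_\star\rangle$ or of $\langle\Kappa,B_\star,C_\star\rangle$, so $F$ is conjugate to a subgroup of one of the two factors. I expect no genuine obstacle; the only points needing a line of care are the identification of the $\eta$-preimages of the factors (immediate from $\ker\eta=\Kappa$ together with the listed values of $\eta$ on the lifts) and the observation that, in contrast to Lemma~\ref{L23}, the two factors are now infinite, which is harmless for the Bass--Serre argument and merely makes the finite-subgroup conclusion weaker.
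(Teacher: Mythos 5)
Your proposal is correct and follows essentially the same route as the paper: pull back the amalgam decomposition of $\mathrm{GL}(2,\integers)$ from Lemma \ref{L27} through the epimorphism $\eta$ of Lemma \ref{L26} by letting $\mathrm{Aff}(\Mu)$ act on the Bass--Serre tree via $\eta$, identify the vertex and edge stabilizers as the $\eta$-preimages $\langle \Kappa, A_\star, C_\star\rangle$, $\langle \Kappa, B_\star, C_\star\rangle$, $\langle \Kappa, (-I)_\star, C_\star\rangle$, and obtain the finite-subgroup statement from the torsion (fixed-point) theorem for groups acting on trees. The extra details you supply (no inversions, single-edge quotient) are exactly what the paper's appeal to the Bass--Serre structure theorem implicitly uses.
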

\begin{proof} The map $\eta: \mathrm{Aff}(\Mu) \to \mathrm{GL}(2,\integers)$, defined by 
$\eta((a+A)_\star) = A$ for each $A \in \mathrm{GL}(2,\integers)$, 
is an epimorphism with kernel $\Kappa$ by Lemma \ref{L26}. 
Hence $\mathrm{Aff}(\Mu)$ acts on the Bass-Serre tree of the amalgamated product decomposition of $\mathrm{GL}(2,\integers)$, given in Lemma \ref{L27}, via $\eta$. 
Therefore $\mathrm{Aff}(\Mu)$ is the amalgamated product of the groups
$\eta^{-1}(\langle A, C\rangle) = \langle\Kappa, A_\star, C_\star\rangle$ 
and $\eta^{-1}(\langle B, C\rangle) = \langle \Kappa, B_\star, C_\star \rangle$
along the subgroup $\eta^{-1}(\langle -I, C\rangle) = \langle \Kappa,  (-I)_\star, C_\star \rangle$.  

By the torsion theorem for amalgamated products of groups, every finite subgroup of $\mathrm{Aff}(\Mu)$ 
is conjugate to a subgroup of either $\langle \Kappa, A_\star, C_\star\rangle$ or $\langle \Kappa, B_\star, C_\star \rangle$. 
\end{proof}

Let $\Lambda = \langle e_1+I, e_1/2 + \sqrt{3}e_2/2+I\rangle$. 
Then $\Lambda$ is a 2-space group, and the flat orbifold $E^2/\Lambda$ is a {\it hexagonal torus} $\varhexagon$. 

\begin{lemma}\label{L30} 
Let $\Mu = \langle e_1+I, e_2+I\rangle$, and let $\Lambda = \langle e_1+I, e_1/2 + \sqrt{3}e_2/2+I\rangle$. 
Let $\Kappa = \{(a+I)_\star: a \in E^2\}$, and  
let 
$$A = \left(\begin{array}{rr} -1 & 0 \\ 0 & 1 \end{array}\right),\ \  B = \left(\begin{array}{rr} -1 & 1 \\ 0 & 1 \end{array}\right), 
\ \ C = \left(\begin{array}{rr} 0 & 1 \\ 1 & 0 \end{array}\right), \ \ 
D  = \left(\begin{array}{rr} 1 & -1/2 \\ 0 & \sqrt{3}/2 \end{array}\right). $$
Then $\mathrm{Sym}(\Mu) = \langle \Kappa, A_\star, C_\star\rangle$,  and $D\Mu D^{-1}=\Lambda$, 
and $D_\sharp: \mathrm{Aff}(\Mu) \to \mathrm{Aff}(\Lambda)$ is an isomorphism, 
and $\mathrm{Sym}(\Lambda) = D_\sharp(\langle \Kappa, B_\star, C_\star\rangle)$. 
\end{lemma}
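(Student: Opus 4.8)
The plan is to follow the scheme of Lemma \ref{L24}, with the essential difference that here $Z(\Mu)=\Mu$ is infinite, so $\mathrm{Sym}(\Mu)$ and $\mathrm{Sym}(\Lambda)$ are infinite compact groups rather than finite ones. Where Lemma \ref{L24} invoked a ``maximal finite subgroup of $\mathrm{Aff}(\Mu)$'' (Lemma \ref{L23}), I will instead transfer the computation across the epimorphism $\eta:\mathrm{Aff}(\Mu)\to\mathrm{GL}(2,\integers)$ of Lemma \ref{L26}, whose kernel is $\Kappa$, and use that $\langle B,C\rangle$ is a \emph{maximal} finite subgroup of $\mathrm{GL}(2,\integers)$: this follows from the torsion theorem (Lemma \ref{L27}) together with the order count $|\langle A,C\rangle|=8<12=|\langle B,C\rangle|$, since any finite overgroup of $\langle B,C\rangle$ is conjugate into $\langle A,C\rangle$ or $\langle B,C\rangle$ and hence, comparing orders, equals $\langle B,C\rangle$.

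I would first check $\mathrm{Sym}(\Mu)=\langle\Kappa,A_\star,C_\star\rangle$. The inclusion $\supseteq$ is immediate: $\Kappa$ acts by translations on the flat square torus $E^2/\Mu$, while $A=\mathrm{diag}(-1,1)$ and $C$ lie in $\mathrm{O}(2)$ and preserve $\integers^2$, hence induce isometries. For $\subseteq$, given $\phi\in\mathrm{Sym}(\Mu)$, Lemmas 1 and 7 of \cite{R-T-I} produce an isometry $a+M$ of $E^2$ normalizing $\Mu$ with $\phi=(a+M)_\star$; by Lemma \ref{L25} we have $M\in\mathrm{GL}(2,\integers)$, and an orthogonal integer matrix is a signed permutation matrix, so $M\in\mathrm{O}(2)\cap\mathrm{GL}(2,\integers)=\langle A,C\rangle$ and $\phi\in\langle\Kappa,A_\star,C_\star\rangle$. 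Equivalently, $\eta(\mathrm{Sym}(\Mu))=\langle A,C\rangle$ and $\ker\eta=\Kappa\subseteq\mathrm{Sym}(\Mu)$, whence $\mathrm{Sym}(\Mu)=\eta^{-1}(\langle A,C\rangle)=\langle\Kappa,A_\star,C_\star\rangle$.

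Next, $D\Mu D^{-1}=\Lambda$ is a direct check, since $De_1=e_1$ and $De_2=-e_1/2+\sqrt{3}e_2/2=(e_1/2+\sqrt{3}e_2/2)-e_1$, so $D$ carries the generating translations of $\Mu$ into $\Lambda$ and conversely. Hence conjugation by the affinity $D$ restricts to an isomorphism $N_A(\Mu)\to N_A(\Lambda)$ carrying $\Mu$ onto $\Lambda$, which descends to the isomorphism $D_\sharp:\mathrm{Aff}(\Mu)\to\mathrm{Aff}(\Lambda)$, exactly as in Lemma \ref{L24}. For the last assertion, a short computation gives $DBD^{-1}=\mathrm{diag}(-1,1)$ and $DCD^{-1}$ a reflection, both in $\mathrm{O}(2)$, while $D_\sharp(\Kappa)$ is the full translation group of $E^2/\Lambda$; therefore $D_\sharp(\langle\Kappa,B_\star,C_\star\rangle)\subseteq\mathrm{Sym}(\Lambda)$. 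For the reverse inclusion, $N_A(\Lambda)=DN_A(\Mu)D^{-1}$ together with Lemma \ref{L25} show that a lift to $E^2$ of an element of $\mathrm{Sym}(\Lambda)$ has the form $b+DMD^{-1}$ with $M\in\mathrm{GL}(2,\integers)$ and $DMD^{-1}\in\mathrm{O}(2)$; so $D_\sharp^{-1}(\mathrm{Sym}(\Lambda))=\eta^{-1}(H)$, where $H=\{M\in\mathrm{GL}(2,\integers):DMD^{-1}\in\mathrm{O}(2)\}$. The group $H$ is finite, since $M\mapsto DMD^{-1}$ realizes it as a discrete subgroup of the compact group $\mathrm{O}(2)$, and it contains $\langle B,C\rangle$; by the maximality noted above, $H=\langle B,C\rangle$, hence $D_\sharp^{-1}(\mathrm{Sym}(\Lambda))=\eta^{-1}(\langle B,C\rangle)=\langle\Kappa,B_\star,C_\star\rangle$, which gives $\mathrm{Sym}(\Lambda)=D_\sharp(\langle\Kappa,B_\star,C_\star\rangle)$.

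The main obstacle is this last step: because $\langle\Kappa,B_\star,C_\star\rangle$ is infinite, the ``maximal finite subgroup of $\mathrm{Aff}(\Mu)$'' argument of Lemma \ref{L24} is unavailable, and the remedy is to push everything through $\eta$ into $\mathrm{GL}(2,\integers)$, where the relevant subgroup $H$ is finite and maximality holds by Lemma \ref{L27}. The two technical points that must be handled with care are the identification $D_\sharp^{-1}(\mathrm{Sym}(\Lambda))=\eta^{-1}(H)$ — namely that every symmetry of a flat torus lifts to a Euclidean isometry whose linear part preserves the relevant lattice, via Lemmas 1 and 7 of \cite{R-T-I} and Lemma \ref{L25} — and the finiteness of $H$, which is the statement that the orthogonal automorphism group of the hexagonal lattice is finite.
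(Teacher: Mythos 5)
Your proof is correct, and it reaches the same two pillars the paper rests on -- realizing the symmetry groups as $\eta$-preimages of finite subgroups of $\mathrm{GL}(2,\integers)$ and invoking the maximality of $\langle B,C\rangle$ coming from the amalgam decomposition of Lemma \ref{L27} -- but the bookkeeping is genuinely different, and in two places leaner. The paper computes $\mathrm{Aut}_E(\Mu)$ and $D_\#^{-1}(\mathrm{Aut}_E(\Lambda))$ inside $\mathrm{Aut}(\Mu)=\mathrm{GL}(2,\integers)$, pins both down as $\langle A,C\rangle$ and $\langle B,C\rangle$ by maximality, and then passes to $\mathrm{Sym}$ via Theorem 2 and Lemma 10 of \cite{R-T-I}, with $\Kappa$ identified as the identity component; you instead lift symmetries to isometric elements of $N_A(\Mu)$ and $N_A(\Lambda)$ via Lemmas 1 and 7 of \cite{R-T-I} and Lemma \ref{L25}, and pull back through $\eta$. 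For the square torus this lets you bypass maximality of $\langle A,C\rangle$ altogether, since $\mathrm{O}(2)\cap\mathrm{GL}(2,\integers)=\langle A,C\rangle$ is a direct signed-permutation computation, whereas the paper does use that maximality; for the hexagonal torus your subgroup $H=\mathrm{GL}(2,\integers)\cap D^{-1}\mathrm{O}(2)D$ plays exactly the role of the paper's $D_\#^{-1}(\mathrm{Aut}_E(\Lambda))$, and your finiteness argument (discrete meets compact) and your explicit order-count justification that $\langle B,C\rangle$ is a maximal finite subgroup fill in steps the paper only asserts. Your instinct that the Lemma \ref{L24} argument (maximal finite subgroups of $\mathrm{Aff}(\Mu)$) cannot be repeated verbatim is also borne out by the paper's own proof, which likewise retreats to $\mathrm{GL}(2,\integers)$; the net effect is that your route trades the $\mathrm{Aut}_E$/$\mathrm{Out}_E$ formalism of \cite{R-T-I} for slightly more hands-on lifting, at no loss of rigor.
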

\begin{proof}
Now $\mathrm{Out}_E(\Mu) = \mathrm{Aut}_E(\Mu)$ is a finite subgroup of 
$\mathrm{Aut}(\Mu) = \mathrm{GL}(2,\integers)$ that contains $\langle A, C\rangle$. 
Hence $\mathrm{Aut}_E(\Mu)= \langle A, C\rangle$, since $\langle A, C\rangle$ 
is a maximal finite subgroup of $\mathrm{GL}(2,\integers)$ by Lemma \ref{L27}. 
The group $\Kappa$ is the connected component of $\mathrm{Sym}(\Mu)$ that contains $I_\star$. 
Hence $\mathrm{Sym}(\Mu) = \langle \Kappa, A_\star, C_\star\rangle$ by Theorem 2 of \cite{R-T-I}. 

Clearly $D\Mu D^{-1}=\Lambda$, and so 
$D_\#: \mathrm{Aut}(\Mu) \to \mathrm{Aut}(\Lambda)$ and 
$D_\sharp: \mathrm{Aff}(\Mu) \to \mathrm{Aff}(\Lambda)$ are isomorphisms. 
Now $D_\#(\langle B, C\rangle)$ is a subgroup of $\mathrm{Aut}_E(\Lambda)$,  
since $DBD^{-1}$, $DCD^{-1} \in \mathrm{O}(2)$.  
Hence $D_\#^{-1}(\mathrm{Aut}_E(\Lambda))$ is a finite subgroup of $\mathrm{Aff}(\Mu)$ 
that contains $\langle B, C\rangle$. 
Therefore $D_\#^{-1}(\mathrm{Aut}_E(\Lambda)) = \langle B, C\rangle$,
since $\langle B, C\rangle$ is a maximal finite subgroup of $\mathrm{GL}(2,\integers)$ by Lemma \ref{L27}. 
Hence $D_\sharp^{-1}(\mathrm{Sym}(\Lambda)) = \langle \Kappa, B_\star, C_\star\rangle$ 
by Theorem 2 of \cite{R-T-I} and Lemma 10 of \cite{R-T-I}. 
Therefore $\mathrm{Sym}(\Lambda) = D_\sharp(\langle \Kappa, B_\star, C_\star\rangle)$. 
\end{proof}

The square torus $\Box = E^2/\Mu$ is formed by identifying the opposite sides 
of the square fundamental domain for $\Mu$, with vertices $(\pm 1/2, \pm 1/2)$, by translations. 
By Lemma \ref{L30}, the group $\mathrm{Isom}(\Box) = \mathrm{Sym}(\Mu)$ is the semidirect product of the translation subgroup 
$\Kappa = \{(a+I)_\star: a \in E^2\}$ 
and the dihedral subgroup $\langle A_\star, C_\star\rangle$ of order 8 
induced by the symmetry group $\langle A, C\rangle$ of the square fundamental domain of $\Mu$. 
The elements of $\Kappa$ of order at most 2 form the dihedral group of order 4, 
$$\Kappa_2 = \{I_\star, (e_1/2+I)_\star, (e_2/2+I)_\star, (e_1/2+e_2/2+I)_\star\}. $$

By Lemma \ref{L31} below, 
the group $\mathrm{Isom}(\Box)$ has six conjugacy classes of elements of order 2 represented by 

\begin{enumerate}
\item the {\it horizontal halfturn}  h-rot.\ = $(e_1/2+I)_\star$
(or {\it vertical halfturn} v-rot. = $(e_2/2+I)_\star$), 
\item the {\it antipodal map} 2-sym. = $(e_1/2+e_2/2+I)_\star$, 
\item the {\it  halfturn} 2-rot.\ = $(-I)_\star$, 
\item the {\it  horizontal reflection} h-ref.\ = $A_\star$
(or  {\it vertical reflection} v-ref.\ = $(-A)_\star$), 
\item the {\it horizontal glide-reflection}  h-grf.\ = $(e_1/2 - A)_\star$, 
(or  {\it vertical glide-reflection} v-grf.\  = $(e_2/2+A)_\star$), and 
\item the {\it diagonal reflection} d-ref.\ = $C_\star$
(or perpendicular diagonal reflection e-ref.\ = $(-C)_\star$). 
\end{enumerate}

\begin{lemma}\label{L31} 
Let $k \in E^2$, and let $K \in \mathrm{GL}(2,\integers)$. 
Then there exists $a \in E^2$ such that $(a+I)(k + K)(a+I)^{-1} = K$ 
if and only if $k \in \mathrm{Im}(K-I)$. 
\end{lemma}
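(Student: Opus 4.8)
The plan is to compute the conjugate $(a+I)(k+K)(a+I)^{-1}$ explicitly in the affine group $\mathrm{Aff}(E^2)$ and to read off precisely when it equals $K$. Recall the conventions used throughout: an affine transformation $b+B$ acts on $E^2$ by $x \mapsto Bx+b$, composition is given by $(b+B)(c+C) = (b+Bc)+BC$, and in particular $(a+I)^{-1} = -a+I$.

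First I would compute $(k+K)(a+I)^{-1} = (k+K)(-a+I) = (k-Ka)+K$. Composing on the left with $a+I$ then gives
\[
(a+I)\big((k-Ka)+K\big) = \big(a + (k-Ka)\big)+K = \big(k+(I-K)a\big)+K,
\]
so that $(a+I)(k+K)(a+I)^{-1} = \big(k+(I-K)a\big)+K$. Since the linear part of this affine transformation is $K$ independently of $a$, the equality $(a+I)(k+K)(a+I)^{-1} = K$ holds if and only if the translation part vanishes, that is, $k+(I-K)a = 0$, equivalently $(K-I)a = k$.

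Such a vector $a \in E^2$ exists if and only if $k \in \mathrm{Im}(K-I)$, which is exactly the asserted equivalence. This completes the argument.

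There is essentially no real obstacle; the computation is routine. The only point requiring a little care is the bookkeeping of the affine-group multiplication convention — namely that $b+B$ denotes $x \mapsto Bx+b$ (not $x \mapsto B(x+b)$) and that $(a+I)^{-1} = -a+I$ — so that the direction of the composition and the sign in $(I-K)a$ come out consistent with the conventions fixed earlier in the paper.
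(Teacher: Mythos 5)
Your computation is correct and coincides with the paper's proof, which likewise evaluates $(a+I)(k+K)(-a+I) = (I-K)a + k + K$ and reads off the condition $k \in \mathrm{Im}(K-I)$ from the vanishing of the translation part. Nothing is missing; the extra care you take with the composition convention is exactly the only point the argument requires.
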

\begin{proof}
This follows from the fact that
$$(a+I)(k+K)(-a+I) = a +k-Ka+ K = (I-K)a + k +K.$$
\par
\vspace{-.23in}
\end{proof}

In the following discussion and Table \ref{T17},  
an apostrophe ``$\,\hbox{'}\,$" on a symmetry means that the symmetry is multiplied on the left by h-rot., 
a prime symbol ``$\,'\,$" on a symmetry means that the symmetry is multiplied on the left by v-rot., and  
a double prime symbol ``$\,''\,"$ means that the symmetry is multiplied on the left by 2-sym. 
These alterations do not change the conjugacy class of the symmetry by Lemma \ref{L31}. 
Note that conjugating by d-ref.\ = $C_\star$ transposes the ``h-" and ``v-" prefixes, and so also 
the ``$\,\hbox{'}\,$" and ``$\,'\,$" superscripts. 

By Lemma \ref{L33} below, 
the group $\mathrm{Isom}(\Box)$ has 12 conjugacy classes of dihedral subgroups of order 4, 
the classes of the groups $\Kappa_2 = \{$idt., h-rot., v-rot., 2-sym.$\}$, 
$\{$idt, h-rot., 2-rot., 2-rot.'$\}$, 
$\{$idt, 2-sym., 2-rot., 2-rot.$''$$\}$, 
$\{$idt., h-rot., v-ref., h-grf.$\}$, 
$\{$idt, v-rot, v-ref, v-ref.$'$$\}$, 
$\{$idt, 2-sym., d-ref, d-ref.$''$$\}$, 
$\{$idt, v-rot, h-grf., h-grf.$'$$\}$, 
$\{$idt, 2-sym, v-ref, h-grf.$'$$\}$, 
$\{$idt., 2-rot. h-ref., v-ref.$\}$, 
$\{$idt., 2-rot., h-ref.', h-grf.$\}$, 
$\{$idt., 2-rot., h-grf.$'$, v-grf.'$\}$,  and 
$\{$idt., 2-rot., d-ref., e-ref.$\}$. 

The isometries 

\begin{itemize}
\item 4-rot.\ = $(AC)_\star$, and 

\item 4-sym.\ = $(e_2/2 + C)_\star$ 
\end{itemize}
are symmetries of $\Box$ of order 4, 
and the groups $\langle$h-ref., d-ref.$\rangle$, $\langle$v-grf., d-ref.$\rangle$, 
and $\langle$v-rot., d-ref.$\rangle$ are dihedral subgroups of $\mathrm{Isom}(\Box)$ of order 8. 
Moreover 4-rot.\ = (h-ref.)(d-ref.) and 4-rot.$'$ =(v-grf.)(d-ref.) and 4-sym.\ = (v-rot.)(d-ref.). 


The hexagonal torus $\varhexagon = E^2/\Lambda$ is formed by identifying the opposite sides of the regular hexagon 
fundamental domain, with vertices $(\pm 1/2,\pm \sqrt{3}/6), (0, \pm \sqrt{3}/3)$, by translations. 
By Lemma \ref{L30}, the group $\mathrm{Isom}(\varhexagon)$ is the semidirect product 
of the subgroup $\Kappa = \{(a+I)_\star: a \in E^2\}$ 
and the dihedral subgroup $\langle (DBD^{-1})_\star, (DCD^{-1})_\star\rangle$ of order 12 
induced by the symmetry group $\langle DBD^{-1}, DCD^{-1}\rangle$ of the regular hexagon fundamental domain of $\Lambda$. 

We denote the symmetry of $\varhexagon$ represented by the $180^\circ, 120^\circ, 60^\circ$ rotation 
about the center of the regular hexagon fundamental domain of $\Lambda$ 
by 

\begin{itemize} 
\item 2-rot.\ = $(-I)_\star$, 
\item 3-rot.\ = $(D(BC)^2D^{-1})_\star$ and 
\item 6-rot.\ = $(DBCD^{-1})_\star$ respectively. 
\end{itemize}

Define affinities of $\Box$  
by 

\begin{itemize} 
\item 3-aff.\ $= ((BC)^2)_\star$ and 
\item 6-aff.\ $= (BC)_\star$. 
\end{itemize}
The affinities 3-aff.\ and 6-aff.\ of $\Box$ are conjugate by $D_\star$ 
to the symmetries 3-rot.\ and 6-rot.\ of $\varhexagon$ respectively. 

Define the reflection l-ref.\ of $\varhexagon$ by  
\begin{itemize}
\item l-ref.\ = $(DCD^{-1})_\star$. 
\end{itemize}
Then l.ref.\ is represented by the reflection in the line segment joining 
the midpoints $\pm (1/4, \sqrt{3}/4)$ of two opposite sides of $\varhexagon$. 

Define the reflection m-ref.\ of $\varhexagon$ by 
\begin{itemize} 
\item m-ref.\ = $(DBD^{-1})_\star = A_\star$.  
\end{itemize}
Then m-ref.\ is represented by the reflection in the line segment joining 
the opposite vertices $(0, \pm \sqrt{3}/3))$ of $\varhexagon$. 

Define the reflections n-ref.\  and o-ref.\ of $\varhexagon$ by
\begin{itemize}
\item  n-ref.\ = $(D(BC)^2CD^{-1})_\star = (DBCBD^{-1})_\star$ and 
\item o-ref.\ = $(D(BC)^3CD^{-1})_\star = (D(-C)D^{-1})_\star$.
\end{itemize} 
The groups $\langle$l-ref., n-ref.$\rangle$ and $\langle$m-ref., o-ref.$\rangle$ are dihedral groups of order 6   
with 3-rot.\ = (n-ref.)(l-ref.) = (o-ref.)(m-ref.). 
The group $\langle$l-ref., m-ref.$\rangle$ is a dihedral group of order 12 with 6-rot.\ = (m-ref.)(l-ref.). 
The symmetries 2-rot., d-ref., e-ref.\ of $\Box$ are conjugate by $D_\star$ to the symmetries 2-rot., l-ref., o-ref.\  of $\varhexagon$ respectively. 

Define affinities of $\Box$ 
by 
\begin{itemize}
\item m-aff.\ = $B_\star$ and 
\item n-aff.\ = $(BCB)_\star$. 
\end{itemize}
The affinities m-aff.\ and n-aff.\ of $\Box$ are conjugate by $D_\star$ 
to the symmetries m-ref.\ and n-ref.\ of $\varhexagon$, respectively. 
The groups $\langle$d-ref., n-aff.$\rangle$, $\langle$m-aff., e-ref.$\rangle$, $\langle$d-ref., m-aff.$\rangle$ 
are conjugate by $D_\star$ to the groups $\langle$l-ref., n-ref.$\rangle$, $\langle$m-ref., o-ref.$\rangle$, 
$\langle$l-ref., m-ref.$\rangle$ respectively.

\begin{lemma}\label{L32} 
Let $\Mu = \langle e_1+I, e_2+I\rangle$, and let $\kappa = (k+K)_\star$ be an element of $\mathrm{Aff}(\Mu)$ 
such that $K$ has finite order.  Then $K$ is conjugate to exactly one of  $I, -I, A, C, (BC)^2, AC$ or $BC$. 
\begin{enumerate}
\item If $K = I$ and $\kappa$ has order 2, then $\kappa \in \Kappa_2$ and $\kappa$ is conjugate to {\rm h-rot.\ =} $(e_1/2+I)_\star$. 
\item If $K =  -I$, then $\kappa$ is conjugate to {\rm 2-rot.\ =} $(-I)_\star$. 
\item If $K$ is conjugate to $A$ and $\kappa$ has order 2, 
then $\kappa$ is conjugate to exactly one of {\rm  h-ref.\ =} $A_\star$ or 
{\rm  v-grf.\ =} $(e_2/2 +A)_\star$.
\item If $K$ is conjugate to $C$ and $\kappa$ has order 2 , then $\kappa$ is conjugate to {\rm  d-ref.\ =} $C_\star$.
\item If $K$ is conjugate to $(BC)^2$, then $\kappa$ is conjugate to {\rm  3-aff.\ =} $(BC)^2_\star$. 
\item If $K$ is conjugate to $AC$, then $\kappa$ is conjugate to {\rm  4-rot.\ =} $(AC)_\star$. 
\item If $K$ is conjugate to $BC$, then $\kappa$ is conjugate to {\rm  6-aff.\ =} $(BC)_\star$. 
\end{enumerate}
\end{lemma}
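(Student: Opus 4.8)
The plan is to combine the conjugation formula in $\mathrm{Aff}(\Mu)$ with Lemmas \ref{L31} and \ref{L28}. Write $\kappa = (k+K)_\star$, and for $\lambda = (a+L)_\star$ with $L \in \mathrm{GL}(2,\integers)$ a direct computation gives $\lambda\kappa\lambda^{-1} = \big((I-LKL^{-1})a + Lk + LKL^{-1}\big)_\star$; thus a conjugate of $\kappa$ has linear part $LKL^{-1}$, and once $L$ is chosen with $LKL^{-1}$ equal to a fixed representative $K_0$, the translation part of the conjugate ranges (modulo $\integers^2$) over $Lk + \mathrm{Im}(I-K_0)$ as $a$ varies over $E^2$ and $L$ varies over the centralizer of $K_0$ in $\mathrm{GL}(2,\integers)$. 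The first sentence of the lemma --- that $K$ is conjugate to exactly one of $I, -I, A, C, (BC)^2, AC, BC$ --- is just Lemma \ref{L28}, and conjugating $\kappa$ by a suitable $(L^{-1})_\star$ reduces each of the seven cases to the situation in which the linear part of $\kappa$ is the chosen representative $K_0$.

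Next I would dispose of the ``nonsingular'' cases (2), (5), (6), (7), where $K_0 \in \{-I, (BC)^2, AC, BC\}$ has no eigenvalue equal to $1$, so $I-K_0$ is invertible and $\mathrm{Im}(I-K_0) = E^2$. By Lemma \ref{L31} one may conjugate $\kappa$ by a translation $(a+I)_\star$ to kill its translation part, so $\kappa$ is conjugate to $(K_0)_\star$, which is $2$-rot., $3$-aff., $4$-rot., $6$-aff.\ respectively.

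Then come the ``singular'' cases (1), (3), (4), where $I-K_0$ has nontrivial kernel and $\kappa$ is assumed to have order $2$. Squaring $\kappa$ forces $(I+K_0)k \in \integers^2$, and conjugation by translations lets us subtract any element of $\mathrm{Im}(I-K_0)$ from $k$. A short computation in each case pins $k$ down modulo $\integers^2$: for $K_0 = I$ one finds $k \in \{e_1/2,\, e_2/2,\, (e_1+e_2)/2\}$; for $K_0 = A = \diag(-1,1)$ one reduces to $k = (0,k_2)$ with $2k_2 \in \integers$ (so $k_2 \equiv 0$ or $\tfrac12$); and for $K_0 = C$ one reduces to $k \equiv 0$, giving $\kappa$ conjugate to $C_\star = $ d-ref. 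For $K_0 = I$ the three possibilities all lie in $\Kappa_2$ and are mutually conjugate: $C_\star$ interchanges $(e_1/2+I)_\star$ and $(e_2/2+I)_\star$, and $U_\star$ with $U = \big(\begin{smallmatrix}1&1\\0&1\end{smallmatrix}\big)$ carries $(e_2/2+I)_\star$ to $(e_1/2+e_2/2+I)_\star$, so $\kappa$ is conjugate to h-rot.

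The main obstacle is the ``exactly one of'' clause in case (3): one must show $A_\star$ and $(e_2/2+A)_\star$ are \emph{not} conjugate in $\mathrm{Aff}(\Mu)$. The point is that any element conjugating an element with linear part $A$ to another such element must itself have linear part in the centralizer of $A$ in $\mathrm{GL}(2,\integers)$, which is $\{I, A, -A, -I\}$; plugging these into the conjugation formula shows the second coordinate of the translation part changes only by a sign modulo $\integers^2$, so $k_2 \bmod \integers \in \{0, \tfrac12\}$ is a conjugacy invariant separating h-ref.\ from v-grf. This bookkeeping, together with the analogous checks making the $K_0 = C$ normal form unique, is the only part of the argument that goes beyond a direct appeal to Lemmas \ref{L31} and \ref{L28}.
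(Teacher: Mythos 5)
Your proposal is correct and follows essentially the same route as the paper: reduce the linear part to one of the seven representatives of Lemma \ref{L28}, then use the squaring constraint on $\kappa$ together with conjugation by translations (Lemma \ref{L31}) to normalize the translation part case by case, treating the cases with $I-K_0$ invertible uniformly. The only local difference is your argument that h-ref.\ and v-grf.\ are not conjugate --- you use the centralizer of $A$ in $\mathrm{GL}(2,\integers)$ to show that the second translation coordinate modulo $\integers$ is a conjugacy invariant, whereas the paper simply notes that h-ref.\ has fixed points on the torus while v-grf.\ does not; both are valid.
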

\begin{proof} The matrix $K$ is conjugate to exactly one of  $I, -I, A, C, (BC)^2, AC$ or $BC$ by Lemma \ref{L28}. 
Hence, we may assume that $K = I, -I, A, C, (BC)^2, AC$ or $BC$. 

(1)  As $\kappa^2 = 2k+I$, we have that $2k \in \integers^2$.  Hence $\kappa \in \Kappa_2$. 
We have that $C(e_1/2+I)C^{-1} = e_2/2+I$ and $B(e_2/2+I)B^{-1} = e_1/2+e_2/2$. 
Hence $\kappa$ is conjugate to $(e_1/2+I)_\star$. 

(2) We have that $(k/2+I)(-I)(-k/2+I) = k-I$. 

(3) Observe that $(k+A)^2 = 2k_2e_2+I$, and so $2k_2\in \integers$.  
Hence $\kappa = (k_1e_1+A)_\star$ or  $\kappa = (e_2/2+k_1e_1+A)_\star$. 
We have that $(k_1e_1/2+I)A(-k_1e_1/2+I) = k_1e_1+A$, and so
$(k_1e_1/2+I)(e_2/2+A)(-k_1e_1/2+I) = e_2/2+k_1e_1+A$. 
The isometries h-ref.\ and v-grf.\,are not conjugate in $\mathrm{Aff}(\Mu)$, 
since h-ref.\ fixes points of $\Box$ whereas v-grf.\ does not. 

(4) We have that $(k + C)^2 = (k_1+k_2)e_1+(k_1+k_2)e_2+I$. 
Hence $k_1+k_2 \in \integers$, and so $\kappa = (k_1e_1-k_1e_2 +C)_\star$. 
We have that $\kappa$ is conjugate to $C_\star$ by Lemma \ref{L31}. 

(5)  We have that $\kappa$ is conjugate to $((BC)^2)_\star$ by Lemma \ref{L31}. 
(6) The proofs of cases (6) and (7) are similar to the proof of (5). 
\end{proof}

\begin{lemma}\label{L33}  
Let $\Mu = \langle e_1+I,e_2+I\rangle$. 
Let $\kappa = (k+K)_\star$ and $\lambda = (\ell +L)_\star$ be distinct elements of order 2 of $\mathrm{Aff}(\Mu)$ 
such that $\Omega(\kappa\lambda)$ has finite order in $\mathrm{Out}(\Mu)$. 
Let $\Eta = \langle K, L \rangle$.  
Then $K$ and $L$ have order 1 or 2 and either $\Eta$ is a cyclic group of order 1 or 2 or $\Eta$ is a dihedral group 
of order 4, 6, 8 or 12. 
\begin{enumerate}
\item If $\Eta = \{I\}$, then $\langle \kappa, \lambda\rangle = \Kappa_2$ 
and $\{\kappa, \lambda\}$ is conjugate to $\{${\rm h-rot.,  v-rot.}$\}$.
\item If $\Eta$ has order 2, then $\Eta$ is conjugate to exactly one of $\langle -I\rangle$, $\langle CA\rangle$
 or $\langle C\rangle$. 
\begin{enumerate}
\item If $K = I$ and $L = -I$, then $\{\kappa, \lambda\}$ is conjugate to $\{${\rm h-rot.,  2-rot.}$\}$. 
\item If $K = L =-I$, then $\{\kappa, \lambda\}$ is conjugate to 
$\{${\rm 2-rot.}, $(v+I)_\star${\rm 2-rot.}$\}$ with $Kv = -v$.  
\item If $K = I$ and $L$ is conjugate to $A$, then $\{\kappa, \lambda\}$ is conjugate to exactly one of 
$\{${\rm h-rot.,  v-ref.}$\}$, $\{${\rm v-rot.,  v-ref.}$\}$, $\{${\rm 2-sym.,  v-ref.}$\}$, $\{${\rm h-rot.,  h-grf.}$\}$, 
$\{${\rm v-rot.,  h-grf.}$\}$ or $\{${\rm 2-sym.,  h-grf.}$\}$. 
\item If $K = L$ and $L$ is conjugate to $A$, then $\{\kappa, \lambda\}$ is conjugate to 
exactly one of $\{${\rm h-ref.}, $(v+I)_\star${\rm h-ref.}$\}$, $\{${\rm h-ref.}, $(v+I)_\star${\rm v-grf.}$\}$, 
or $\{${\rm v-grf.}, $(v+I)_\star${\rm v-grf.}$\}$ with $Av = -v$.  
\item If $K = I$ and $L$ is conjugate to $C$, then $\{\kappa, \lambda\}$ is conjugate to exactly one of 
$\{${\rm 2-sym.,  d-ref.}$\}$ or $\{${\rm v-rot.,  d-ref.}$\}$. 
\item If $K = L$ and $L$ is conjugate to $C$, then $\{\kappa, \lambda\}$ is conjugate to 
$\{${\rm d-ref.}, $(v+I)_\star${\rm d-ref.}$\}$ with $Cv = -v$. 
\end{enumerate}
\item If $\Eta$ has order 4,  then  $\Eta$ is conjugate to exactly one of  $\langle -I, A \rangle$ or $\langle -I, C\rangle$. 
\begin{enumerate}
\item If $K = -I$ and $L$ is conjugate to $A$, then $\{\kappa, \lambda\}$ is conjugate to exactly one of  
$\{${\rm 2-rot.}, $(v+I)_\star${\rm h-ref.}$\}$ or $\{${\rm 2-rot.}, $(v+I)_\star${\rm v-grf.}$\}$ with $Av = -v$. 
\item If $K = -L$ and $K$ is conjugate to $A$, then $\{\kappa, \lambda\}$ is conjugate to exactly one of 
$\{${\rm h-ref., v-ref.}$\}$,  $\{${\rm v-ref.}$'$, {\rm v-grf.}$\}$ or $\{${\rm v-grf.'}, {\rm h-grf.}$'$$\}$. 
\item If $K = -I$ and $L$ is conjugate to $C$, then $\{\kappa, \lambda\}$ is conjugate to 
$\{${\rm 2-rot.}, $(v+I)_\star${\rm d-ref.}$\}$ with $Cv = -v$. 
\item If $K = -L$ and $K$ is conjugate to $C$, then $\{\kappa, \lambda\}$ is conjugate to $\{${\rm d-ref., e-ref.}$\}$. 
\end{enumerate}
\item If $\Eta$ has order 8,  then $\Eta$ is conjugate to $\langle A, C\rangle$ and 
$\{\kappa, \lambda\}$ is conjugate to exactly one of $\{${\rm h-ref., d-ref.}$\}$ or $\{${\rm v-grf., d-ref.}$\}$. 
\item If $\Eta$ has order 6,  then $\Eta$ is conjugate to exactly one of $\langle BCB, C\rangle$ or $\langle -C, B\rangle$. 
\begin{enumerate}
\item In the former case, $\{\kappa, \lambda\}$ is conjugate to $\{${\rm n-aff., d-ref.}$\}$.
\item In the latter case, $\{\kappa, \lambda\}$ is conjugate to $\{${\rm e-ref., m-aff.}$\}$. 
\end{enumerate}
\item If $\Eta$ has order 12, then $\Eta$ is conjugate to $\langle B, C\rangle$ and 
$\{\kappa, \lambda\}$ is conjugate to $\{${\rm  m-aff., d-ref.}$\}$.
\end{enumerate}
\end{lemma}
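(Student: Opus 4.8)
The plan is to determine the matrix group $\Eta=\langle K,L\rangle\le\mathrm{GL}(2,\integers)$ up to conjugacy, then to normalize the translation parts of $\kappa$ and $\lambda$ simultaneously, and finally to check that the resulting normal forms are pairwise non-conjugate in $\mathrm{Aff}(\Mu)$. Under the identification $\mathrm{Out}(\Mu)=\mathrm{GL}(2,\integers)$ of Lemma~\ref{L25}, the map $\Omega$ is the epimorphism $\eta\colon\mathrm{Aff}(\Mu)\to\mathrm{GL}(2,\integers)$, $\eta((a+A)_\star)=A$, with kernel $\Kappa$ (Lemma~\ref{L26}). Applying $\eta$ to $\kappa^2=\lambda^2=\mathrm{idt.}$ gives $K^2=L^2=I$, so $K$ and $L$ have order $1$ or $2$ and $\Eta$ is a homomorphic image of the infinite dihedral group. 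If $K=I$, $L=I$, or $K=L$, then $\Eta$ is cyclic of order $1$ or $2$; otherwise $\Eta$ is dihedral of order $2n$ with $n=\mathrm{ord}(KL)=\mathrm{ord}(\Omega(\kappa\lambda))$, which is finite by hypothesis, and $n\in\{2,3,4,6\}$ by Lemma~\ref{L28} (with $n=1$ excluded since $K\neq L$). This gives the first two sentences of the statement.

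Next I would fix the conjugacy class of $\Eta$. By Lemma~\ref{L27} every finite subgroup of $\mathrm{GL}(2,\integers)$ is conjugate into $\langle A,C\rangle$ (order $8$) or $\langle B,C\rangle$ (order $12$); listing the subgroups of these two dihedral groups that are generated by at most two involutions, and using Lemma~\ref{L28} to decide conjugacy, yields exactly the groups in the statement: $\{I\}$; the order-$2$ classes $\langle-I\rangle$, $\langle A\rangle$, $\langle C\rangle$; the Klein four-groups $\langle-I,A\rangle$ and $\langle-I,C\rangle$; the group $\langle A,C\rangle$; the two order-$6$ groups $\langle BCB,C\rangle$ and $\langle-C,B\rangle$ inside $\langle B,C\rangle$; and $\langle B,C\rangle$. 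The only non-obvious non-conjugacy is that the two order-$6$ groups are not conjugate in $\mathrm{GL}(2,\integers)$: a conjugating element would normalize the order-$3$ subgroup they share, and that normalizer is $\langle B,C\rangle$, inside which the two distinct index-$2$ $D_3$ subgroups are not conjugate.

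With $\Eta$ in standard form, the remaining freedom for normalizing $k$ and $\ell$ consists of conjugation by $\Kappa$, which by Lemma~\ref{L31} sends $k\mapsto k+(I-K)a$ and $\ell\mapsto\ell+(I-L)a$ for $a\in E^2$, together with conjugation by $A_\star$ for those $A\in\mathrm{GL}(2,\integers)$ that normalize $\Eta$ and stabilize $\{K,L\}$. Lemma~\ref{L32} already gives the normal form of a single order-$2$ affinity once its matrix part is fixed, so the task is to run the two reductions in tandem: first simplify $\kappa$, then use the part of $\Kappa$ that fixes $\kappa$ (the $a$ with $(I-K)a\in\integers^2$), together with the available normalizer, to simplify $\lambda$. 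The subcases (a)--(f) under each order are precisely the possibilities for where $k$ and $\ell$ land modulo $\mathrm{Im}(I-K)$, $\mathrm{Im}(I-L)$, and $\integers^2$; for example, when $\Eta=\langle-I,A\rangle$ with $K=-L$ conjugate to $A$, after fixing $K=A$ one finds $\kappa$ conjugate to h-ref.\ or to v-grf., and then $\ell$ is forced into exactly one of the three listed pairs.

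Finally, within each part one must check that the listed pairs --- and the one-parameter families where these occur --- are genuinely distinct elements of $\mathrm{Isom}(D_\infty,\Mu)$, i.e.\ that the affinity pairs are not conjugate in $\mathrm{Aff}(\Mu)$. The available invariants are the $\mathrm{GL}(2,\integers)$-conjugacy classes of $K$, of $L$, and of $KL$ (Lemma~\ref{L28}); whether a given element has a fixed point on $\Box$, equivalently whether $k\in\mathrm{Im}(I-K)+\integers^2$ (from Lemma~\ref{L31}), which for instance separates h-ref.\ from v-grf.\ and $\{$h-ref., d-ref.$\}$ from $\{$v-grf., d-ref.$\}$; and, in the order-$6$ case, the conjugacy class of $\Eta$ itself. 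I expect the main obstacle to be the simultaneous normalization in the third step: since conjugation by $(a+I)_\star$ moves $k$ and $\ell$ together, one must keep track of the subgroup $\mathrm{Im}(I-K)+\mathrm{Im}(I-L)+\integers^2$ of $E^2$ and of how the admissible conjugations act on the quotient by it, and this bookkeeping is what produces the full list of subcases. A secondary subtlety, noted above, is the non-conjugacy of the two order-$6$ dihedral subgroups, which is invisible to the fixed-point invariant and needs the normalizer argument.
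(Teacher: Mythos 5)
Your plan follows the paper's proof essentially step for step: both derive $K^2=L^2=I$ and the possible orders of $\Eta$ from the identification $\Omega=\eta$ (Lemmas \ref{L25}--\ref{L26}) together with Lemmas \ref{L27}--\ref{L28}, both pin down the conjugacy class of $\Eta$ via the amalgamated decomposition, and both normalize the translation parts using Lemmas \ref{L31}--\ref{L32} and then separate the resulting pairs by fixed-point and centralizer/normalizer considerations. What remains is exactly the routine subcase-by-subcase computation that the paper writes out, and your outline --- including the normalizer argument distinguishing the two order-6 dihedral subgroups, a point the paper leaves implicit --- leads to the same list.
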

\begin{proof}
As $\kappa^2 = (k +Kk + K^2)_\star$, we have that $k + Kk \in \integers^2$ and $K^2 = I$.  
Hence $K$ has order 1 or 2. Likewise $L$ has order 1 or 2. 
Now $KL$ has finite order by Theorem 3 of \cite{R-T-I} and Lemma \ref{L26}, 
moreover $KL$ has order 1, 2, 3, 4 or 6 by Lemma \ref{L32}. 
Hence, either $\Eta$ is cyclic of order 1 or 2 or $\Eta$ is dihedral of order 4, 6, 8 or 12.

(1) We have that $\langle \kappa,\lambda\rangle = \Kappa_2$ by Lemma \ref{L32}(1),  
moreover $B_\star\{$h-rot., v-rot.$\}B^{-1}_\star = \{$h-rot., 2-sym.$\}$ and 
$C_\star\{$h-rot., 2-sym.$\}C_\star^{-1} = \{$v-rot., 2-sym.$\}$. 

(2)  If $\Eta$ has order 2,  then $\Eta$ is conjugate to exactly one of $\langle -I\rangle$, $\langle A\rangle$
 or $\langle C\rangle$ by Lemma \ref{L28}; moreover if $\Eta$ is conjugate to $\langle -I\rangle$, then $\Eta = \langle -I\rangle$. 
 
(a) By conjugating $\{\kappa, \lambda\}$, we may assume that $\lambda$ = 2-rot.\,by Lemma \ref{L32}(2). 
Then $\{\kappa, \lambda\}$ is conjugate to $\{${\rm h-rot.,  2-rot.}$\}$ as in the proof of Lemma \ref{L32}(1). 

(b) By Lemma \ref{L32}(2), we may assume that $\kappa = K_\star = $ 2-rot. 
Then $L = (\ell + K)_\star = (\ell+I)_\star$2-rot.\ with $K\ell = -\ell$. 

(c) By conjugating $\{\kappa, \lambda\}$, we may assume that $\lambda$ = h-ref.\ or v-grf.\ by Lemma \ref{L32}(3). 
Then $\kappa \in \Kappa_2$ by Lemma \ref{L32}(1), and so there are six possibilities.  
These six pairs are nonconjugate since h-ref.\ is not conjugate to v-grf., the centralizer of $A$ in $\mathrm{GL}(2,\integers)$ 
is $\langle -I, A\rangle$, and $\langle -I, A\rangle$ centralizes $\Kappa_2$. 

(d) By Lemma \ref{L32}(3),  we may assume that $K = A$ and $\kappa$ = h-ref.\ or v-grf. 
Now $\lambda = (v +A)_\star$ or $\lambda = (e_2/2 + v + A)_\star$ with $Av = -v$ 
by the proof of Lemma \ref{L32}(3). 
Hence $\lambda = (v+I)_\star$h-ref. or $\lambda = (v+I)_\star$v-grf.\, with $Av = - v$. 
Moreover $(-v/2+I)_\star\{$h-ref., $(v+I)_\star$v-grf.$\}(v/2+I)_\star = \{(-v+I)_\star$h-ref., v-grf.$\}$. 

(e) By conjugating $\{\kappa, \lambda\}$, we may assume that $\lambda$ = d-ref.\ by Lemma \ref{L32}(4). 
Then $\kappa \in \Kappa_2$ by Lemma \ref{L32}(1), and so there are three possibilities.  
We have that $C(e_1/2+C)C^{-1} = e_2/2 + C$.  
The pair $\{${\rm v-rot.,  d-ref.}$\}$  is not conjugate to $\{${\rm 2-sym,  d-ref.}$\}$, 
since (v-rot.)(d-ref.) has order 4 while (2-sym.)(d-ref.) has order 2. 

(f) By Lemma \ref{L32}(4),  we may assume that $K = C$ and $\kappa = C_\star =$ d-ref. 
Then $\lambda = (\ell + C)_\star = (\ell + I)_\star$d-ref.\ with $C\ell = -\ell$ by the proof of Lemma \ref{L32}(4).  

(3)  If $\Eta$ has order 4, then $\Eta$ is conjugate to exactly one of $\langle -I, A\rangle$ or $\langle -I, C\rangle$ 
by Lemma \ref{L27}.

(a) By conjugating $\{\kappa, \lambda\}$, we may assume that $L = A$ and $\kappa$ = 2-rot.\,by Lemma \ref{L31}. 
Now $\lambda = (v+A)_\star$ or $\lambda = (e_2/2+v+A)_\star$ with $Av = -v$ by the proof of Lemma \ref{L32}(3).   
Hence $\lambda = (v+I)_\star$h-ref.\ or $\lambda = (v+I)_\star$v-grf.\ with $Av= -v$.

(b) By conjugating $\{\kappa, \lambda\}$, we may assume that $K = A$ and $\kappa\lambda$ = 2-rot. 
By the proof of Lemma \ref{L32}(3), we have that $\kappa = (k_1e_1 +A)_\star$ or $(e_2/2+k_1e_1 + A)_\star$ 
and $\lambda = (\ell_2e_2-A)_\star$ or $(e_1/2+\ell_2e_2 -A)_\star$.  As $\kappa\lambda$ = 2-rot., 
we have that $\{\kappa, \lambda\}$ is either $\{$h-ref., v-ref.$\}$,  $\{$h-ref.', h-grf.$\}$, 
$\{$v-grf., v-ref.$'\}$, or $\{$v-grf.', h-grf.$'\}$.  Moreover $C_\star\{$h-ref.', h-grf.$\}C_\star^{-1} = \{$v-ref.$'$, v-grf.$\}$. 

(c) By conjugating $\{\kappa, \lambda\}$, we may assume that $L = C$ and $\kappa$ = 2-rot. 
By the proof of Lemma \ref{L32}(4), we have that $\lambda = (v+C)_\star = (v+I)_\star$d-ref.\ 
with $Cv = -v$. 

(d) By conjugating $\{\kappa, \lambda\}$, we may assume that $K = C$ and $\kappa\lambda$ = 2-rot. 
By the proof of Lemma \ref{L32}(4), we have that $\kappa = (k_1e_1 - k_1e_2 +C)_\star$.  
Now $(\ell - C)^2 = (\ell_1-\ell_2)e_1 + (\ell_2-\ell_1)e_2 + I$.  
Hence $\ell_1-\ell_2 \in \integers$, and so $\lambda = (\ell_1e_1 + \ell_1e_2-C)_\star$. 
Now $(k_1e_1 - k_1e_2 +C)(\ell_1e_1 + \ell_1e_2-C) = (k_1+\ell_1)e_1 + (\ell_1-k_1)e_1 - I$. 
Hence $k_1+\ell_1, \ell_1-k_1 \in \integers$.  Thus we may take $\ell_1 = k_1$ and $k_1 = 0$ or $1/2$. 
Then $\{\kappa, \lambda\}$ is either $\{$d-ref., e-ref.$\}$ or $\{$d-ref.$''$, e-ref.$''\}$.  
Moreover $(e_1/2+I)C(-e_1/2+I) = e_1/2-e_2/2+C$ and $(e_1/2+I)(-C)(-e_1/2+I) = e_1/2+e_2/2-C$.  

(4)  If $\Eta$ has order 8,  then $\Eta$ is conjugate to $\langle A, C\rangle$ by Lemma \ref{L27}. 
By conjugating $\{\kappa, \lambda\}$, we may assume that $K = A$ and $L = C$ and $\kappa\lambda$ = 4-rot. 
By Lemma \ref{L32}(3), we have that $\kappa = (k_1e_1+A)_\star$ or $(k_1e_1+e_2/2+A)_\star$, 
and by Lemma \ref{L32}(4), we have that $\lambda = (\ell_1e_1-\ell_1e_2+C)_\star$. 
Now $(k_1e_1 +A)(\ell_1e_1 - \ell_1e_2+C) = (k_1-\ell_1)e_1 - \ell_1e_2 + AC$, 
and we may take $k_1 = \ell_1 = 0$, 
moreover $(k_1e_1 +e_2/2 +A)(\ell_1e_1-\ell_1e_2+C) = (k_1-\ell_1)e_1 +(1/2- \ell_1)e_2 + AC$, 
and we may take $k_1 = \ell_1 = 1/2$. 
Then  we have that $\{\kappa, \lambda\}$ = $\{$h-ref., d-ref.$\}$ or $\{$v-grf.', d-ref.$''$$\}$. 
Moreover, we have that $(e_1/4-e_2/4+I)(e_2/2+A)(-e_1/4+e_2/4+I) = e_1/2+e_2/2+A$ and 
we have that $(e_1/4-e_2/4+I)C(-e_1/4+e_2/4+I) = e_1/2 -e_2/2+C$. 

(5) If $\Eta$ has order 6,  then $\Eta$ is conjugate to exactly one of 
$\langle B^2, C\rangle$ or $\langle B^2, CB\rangle$ by Lemma \ref{L27}. 

(a) By conjugating $\{\kappa, \lambda\}$, we may assume that $K = BCB$ and $L = C$ and $\kappa\lambda$ = 3-aff. 
Now $(k + BCB)^2 = (2k_2-k_1)e_2 + I$. Hence $\kappa = 2k_2e_1+k_2e_2 + BCB$. 
By Lemma \ref{L32}(4), we have that $\lambda = (\ell_1e_1-\ell_1e_2+C)_\star$. 
Now, we have that $\kappa\lambda = ((2k_2-\ell_1)e_1+(k_2 - 2\ell_1)e_2 +(BC)^2)_\star$.  
Hence, we may take $k_2 = \ell_1 = 0$, and so $\kappa = (BCB)_\star$ = n-aff.\ and $\lambda = C_\star$ = d-ref.

(b) By conjugating $\{\kappa, \lambda\}$, we may assume that $K = -C$ and $L = B$ and $\kappa\lambda$ = 3-aff. 
Now $(k-C)^2 = (k_1 - k_2)e_1+(k_2 - k_1)e_2+ I$. 
Hence $\kappa = (k_1e_1+k_1e_2 - C)_\star$. 
Now $(\ell + B)^2 = \ell_2e_1+2\ell_2e_2+I$. 
Hence $\lambda= B_\star$ = m-aff. 
Now $(k_1e_1+k_1e_2 - C)B = k_1e_1+k_1e_2 + (BC)^2$. 
Hence $\kappa = (-C)_\star$ = e-ref.

(6) If $\Eta$ has order 12,  then $\Eta$ is conjugate to $\langle B, C\rangle$ by Lemma \ref{L27}. 
By conjugating $\{\kappa, \lambda\}$, we may assume that $K = B$ and $L = C$ and $\kappa\lambda$ = 6-aff. 
By the proof of 5(b), we have that $\kappa = B_\star$ = m-aff. 
By Lemma \ref{L32}(4), we have that $\lambda = (k_1e_1-k_1e_2+C)_\star$. 
Now $B(k_1e_1-k_1e_2+C) = -2k_1e_1 - k_1e_2 + BC$. 
Hence $\lambda = C_\star$ = d-ref.
\end{proof}

\begin{table} 
\begin{tabular}{rlllll}
no. & fibers & grp. & quotients &  structure group action & classifying pair \\
\hline 
    1 & $(\circ, \mathrm{O})$ & $C_1$ & $(\circ, \mathrm{O})$ & (idt., idt.)  & \{idt., idt.\}  \\
    2 & $(\circ, \mathrm{O})$ & $C_2$ & $(2222, \mathrm{I})$ & (2-rot., ref.) & \{2-rot., 2-rot.$\}_\ast$ \\
    3 & $(\circ, \mathrm{O})$ & $C_2$ & $(\ast\ast, \mathrm{I})$ & (v-ref., ref.) & \{v-ref., v-ref.$\}_\ast$ \\
    4 & $(\circ, \mathrm{O})$& $C_2$ & $(\times\times, \mathrm{I})$ & (h-grf., ref.) & \{h-grf., h-grf.$\}_\ast$\\
    4 & $(\circ, \mathrm{O})$ & $ C_2$ & $(2222, \mathrm{O})$ & (2-rot., 2-rot.) & \{2-rot., 2-rot.\} \\
    5 & $(\circ, \mathrm{O})$ & $C_2$ & $(\ast\times, \mathrm{I})$ & (e-ref., ref.) & \{e-ref., e-ref.$\}_\ast$\\
    5 & $(\circ, \mathrm{O})$ & $D_2$ & $(\ast\ast, \mathrm{I})$ & (v-ref., ref.), (h-rot., 2-rot.) & \{v-ref., h-grf.$\}_\ast$ \\
    6 & $(\circ, \mathrm{I})$  & $C_1$ & $(\circ, \mathrm{I})$ & (idt., idt.)  & \{idt., idt.\}  \\
    7 & $(\circ, \mathrm{O})$ & $C_2$ & $(\circ, \mathrm{I})$ & (v-rot., ref.) & \{v-rot., v-rot.\} \\
    7 & $(\circ, \mathrm{O})$ & $C_2$ & $(\ast\ast, \mathrm{O})$ & (v-ref., 2-rot.) & \{v-ref., v-ref.\}\\
    8 & $(\circ, \mathrm{I})$ & $D_1$ & $(\circ, \mathrm{I})$ & (h-rot., ref.) & \{idt., h-rot\} \\
    9 & $(\circ, \mathrm{O})$ & $D_2$ & $(\circ, \mathrm{I})$ & (v-rot., ref.), (h-rot., 2-rot.) &  \{v-rot, 2-sym.\} \\
    9 & $(\circ, \mathrm{O})$ & $C_2$ & $(\ast\times, \mathrm{O})$ & (d-ref., 2-rot.)  & \{d-ref., d-ref.\} \\
  11 & $(\circ, \mathrm{I})$ & $D_1$ & $(2222, \mathrm{I})$ &(2-rot., ref.)  & \{idt., 2-rot.\} \\
  13 & $(\circ, \mathrm{O})$ & $D_2$ & $(\ast 2222, \mathrm{I})$ & (h-ref., ref.),  (v-ref., 2-rot.) & \{h-ref., 2-rot.$\}_\ast$ \\
  14 & $(\circ, \mathrm{O})$ & $D_2$ & $(2222, \mathrm{I})$ & (v-rot., ref.),  (2-rot.$'$, 2-rot.) & \{v-rot., 2-rot.\} \\
  14 & $(\circ, \mathrm{O})$ & $D_2$ & $(22\ast, \mathrm{I})$ & (v-grf., ref.), (v-ref.$'$, 2-rot.) & \{v-grf., 2-rot.$\}_\ast$ \\
  15 & $(\circ, \mathrm{O})$ & $D_2$ & $(2{\ast}22, \mathrm{I})$ & (e-ref., ref.), (d-ref., 2-rot.) & \{e-ref., 2-rot.$\}_\ast$ \\
  17 & $(\circ, \mathrm{O})$ & $D_2$ & $(\ast 2222, \mathrm{I})$ & (v-ref., ref.), (2-rot., 2-rot.) &  \{v-ref., h-ref.\} \\
  18 & $(\circ, \mathrm{O})$ & $D_2$ & $(22\ast, \mathrm{I})$ & (h-grf., ref.), (2-rot.', 2-rot.)  & \{h-grf., h-ref.\} \\
  19 & $(\circ, \mathrm{O})$ & $D_2$ & $(22\times, \mathrm{I})$ & (v-grf., ref.), (2-rot.$''$, 2-rot.) &  \{v-grf., h-grf.\} \\
  20 & $(\circ, \mathrm{O})$ & $D_2$ & $(2{\ast}22, \mathrm{I})$ & (d-ref., ref.), (2-rot., 2-rot.)  & \{d-ref., e-ref.\} \\
  28 & $(\circ, \mathrm{I})$ & $D_1$ & $({\ast}{\ast}, \mathrm{I})$ & (h-ref., ref.) &  \{idt., h-ref.\} \\
  29 & $(\circ, \mathrm{O})$ & $D_2$ & $({\ast}{\ast}, \mathrm{I})$ & (v-rot., ref.), (h-ref., 2-rot.) & \{v-rot., v-grf.\} \\
  30 & $(\circ, \mathrm{O})$ & $D_2$ & $({\ast}{\ast}, \mathrm{I})$ & (v-rot., ref.), (v-grf., 2-rot.) & \{v-rot., h-ref.\} \\
  31 & $(\circ, \mathrm{I})$ & $D_1$ & $({\times}{\times}, \mathrm{I})$ & (v-grf., ref.)  & \{idt., v-grf.\} \\
  32 & $(\circ, \mathrm{O})$ & $D_2$ & $({\ast}{\ast}, \mathrm{I})$ & (h-rot., ref.), (h-ref.', 2-rot.) & \{h-rot., h-ref.\} \\
  33 & $(\circ, \mathrm{O})$ & $D_2$ & $({\times}{\times}, \mathrm{I})$ & (h-rot., ref.), (v-grf.', 2-rot.) &  \{h-rot., v-grf.\} \\
  33 & $(\circ, \mathrm{O})$ & $D_2$ & $({\ast}{\times}, \mathrm{I})$ & (2-sym., ref.), (h-ref., 2-rot.) & \{2-sym., v-grf.'\} \\
  34 & $(\circ, \mathrm{O})$ & $D_2$ & $({\ast}{\times}, \mathrm{I})$ & (2-sym., ref.), (v-grf., 2-rot.) & \{2-sym., h-ref.'\} \\
  40 & $(\circ, \mathrm{I})$ & $D_1$ & $({\ast}{\times}, \mathrm{I})$ & (e-ref., ref.)  &  \{idt., e-ref.\} \\
  41 & $(\circ, \mathrm{O})$ & $D_2$ & $({\ast}{\ast}, \mathrm{I})$ & (e-ref.$''$, ref.), (e-ref., 2-rot.)  & \{e-ref.$''$, 2-sym.\} \\
  43 & $(\circ, \mathrm{O})$ & $D_4$ & $({\ast}{\times}, \mathrm{I})$ & (v-rot., ref.), (4-sym., 4-rot.) &  \{v-rot., d-ref.\} \\
  76 & $(\circ, \mathrm{O})$ & $C_4$ & $(442, \mathrm{O})$ & (4-rot., 4-rot.)  & \{4-rot., 4-rot.$^{-1}$\} \\
  91 & $(\circ, \mathrm{O})$ & $D_4$ & $({\ast}442, \mathrm{I})$ & (h-ref., ref.), (4-rot., 4-rot.) & \{h-ref., d-ref.\} \\
  92 & $(\circ, \mathrm{O})$ & $D_4$ & $(4{\ast}2, \mathrm{I})$ & (v-grf., ref.), (4-rot.$'$, 4-rot.) & \{v-grf., d-ref.\}  \\
144 & $(\circ, \mathrm{O})$ & $C_3$ & $(333, \mathrm{O})$ & (3-rot., 3-rot.)  &  \{3-aff., 3-aff.$^{-1}$\} \\
151 & $(\circ, \mathrm{O})$ & $D_3$ & $({\ast}333, \mathrm{I})$ & (o-ref., ref.), (3-rot., 3-rot.)  & \{e-ref., m-aff.\} \\
152 & $(\circ, \mathrm{O})$ & $D_3$ & $(3{\ast}3, \mathrm{I})$ & (n-ref., ref.), (3-rot., 3-rot.) & \{n-aff., d-ref.\} \\
169 & $(\circ, \mathrm{O})$ & $C_6$ & $(632, \mathrm{O})$ & (6-rot., 6-rot.)  & \{6-aff., 6-aff.$^{-1}$\} \\
178 & $(\circ, \mathrm{O})$ & $D_6$ & $({\ast}632, \mathrm{I})$ & (m-ref., ref.), (6-rot., 6-rot.) & \{m-aff., d-ref.\} \\
\end{tabular}

\medskip
\caption{The classification of the co-Seifert fibrations of 3-space groups 
whose generic fiber is of type $\circ$ with IT number 1}\label{T17}
\end{table}

We represent $\mathrm{Out}(\Mu)$ by the image of the monomorphism 
$\sigma: \mathrm{GL}(2,\integers) \to \mathrm{Aff}(\Mu)$ (see Lemmas \ref{L25} and \ref{L26}).  
The set $\mathrm{Isom}(C_\infty,\Mu)$ consists of seven elements 
corresponding to the pairs of inverse elements \{idt., idt.\}, \{2-rot., 2-rot.\}, \{h-ref., h-ref.\}, \{d-ref., d-ref.\}, \{4-rot., 4-rot.$^{-1}$\}, \{3-aff., 3-aff.$^{-1}$\}, \{6-aff., 6-aff.$^{-1}$\} of $\sigma(\mathrm{GL}(2, \integers))$ 
by Lemma \ref{L28} and Theorem 7 of \cite{R-T-I}. 
The pairs  \{h-ref., h-ref.\} and  \{v-ref., v-ref.\}  determine the same element of $\mathrm{Isom}(C_\infty,\Mu)$, 
since they are conjugate. 

The set $\mathrm{Isom}(D_\infty,\Mu)$ consists of thirty four elements corresponding to the remaining classifying pairs 
of elements of $\mathrm{Aff}(\Mu)$ in Table \ref{T17} by Lemma \ref{L33} and Theorems 9 and 10 of \cite{R-T-I}. 
Note that one may have to conjugate a classifying pair in Table \ref{T17} to make it correspond to a pair in Lemma \ref{L33}. 

\section{Enantiomorphic 3-space group pairs}

In this section, we apply our theory 
to give an explanation for all but one of the enantiomorphic 3-space group pairs.   
An enantiomorphic 3-space group pair consists of a pair $(\Gamma_1, \Gamma_2)$ of isomorphic 
3-space groups all of whose elements are orientation-preserving such that there is no orientation-preserving 
affinity of $E^3$ that conjugates one to the other. 
There are 11 enantiomorphic 3-space group pairs up to isomorphism, 
and 10 of them have 2-dimensional, complete, normal subgroups; 
moreover, these complete normal subgroups are unique.  

The first enantiomorphic pair $(\Gamma_1, \Gamma_2)$ has IT numbers 76 and 78. 
The geometric co-Seifert fibration of $E^3/\Gamma_1$ is described in Table \ref{T17}. 
The action of the cyclic structure group of order 4 on $\circ \times \mathrm{O}$ is given by $($4-rot., 4-rot.$)$. 
The action of the structure group for $E^3/\Gamma_2$ 
is given by $($4-rot.$^{-1}$, 4-rot.), 
since the classifying pair is $\{$4-rot., 4-rot.$^{-1}\}$.  

Let $\Nu_i$ be the unique 2-dimensional, complete, normal subgroup of $\Gamma_i$ for $i = 1, 2$. 
We have that $\mathrm{Span}(\Nu_i) = E^2$. 
Let $\phi$ be an affinity of $E^3$ such that $\phi\Gamma_1\phi^{-1} = \Gamma_2$, 
Then $\phi \Nu_1\phi^{-1} = \Nu_2$, since $\phi\Nu_1\phi^{-1}$ is a 2-dimensional, complete, normal subgroup of $\Gamma_2$.  
Let $\ov \phi: E^2 \to E^2$ be the restriction of $\phi$, and 
let $\phi' : (E^2)^\perp \to (E^2)^\perp$ be the restriction of $\phi$ followed by orthogonal projection. 
Then $\phi$ is orientation-preserving if and only if either $\ov\phi$ and $\phi'$ are both orientation-preserving 
(orientation-preserving case)
or $\ov\phi$ and $\phi'$ are both orientation-reversing (orientation-reversing case). 

We now prove that the pair $(\Gamma_1, \Gamma_2)$ is enantiomorphic. 
Assume that $\phi$ is orientation-preserving. 
By Theorem 3.3 of \cite{R-T-B}, we have the equation
$$\Xi_2\Rho_2^{-1}(\phi')_\ast\Rho_1 = (Dp_1)_\star(\ov\phi)_\sharp\Xi_1.$$
The map $\Rho_2^{-1}(\phi')_\ast\Rho_1:\Gamma_1/\Nu_1 \to \Gamma_2/\Nu_2$ is an isomorphism 
of infinite cyclic groups that maps the positively oriented generator of $\Gamma_1/\Nu_1$ 
to the positively oriented generator of $ \Gamma_2/\Nu_2$ if and only if $\phi'$ is orientation-preserving. 
The map $\Xi_i: \Gamma_i/\Nu_i \to \mathrm{Isom}(E^2/\Nu_i)$ is the homomorphism induced 
by the action of $\Gamma_i/\Nu_i$ on $E^2/\Nu_i$, which factors through the action of the structure 
group on $E^2/\Nu_i$ for each $i = 1, 2$. 
The image of the positively oriented generator of $\Gamma_1/\Nu_1$ under $\Xi_2\Rho_2^{-1}(\phi')_\ast\Rho_1$ 
is therefore (4-rot.)$^{-1} = (AC)^{-1}_\star$ in the orientation-preserving case and 4-rot. $=(AC)_\star$ in the orientation-reversing case. 

The map $(Dp_1)_\star(\ov\phi)_\sharp\Xi_1$ is harder to evaluate because $(Dp_1)_\star$ is a crossed homomorphism from $\Gamma_1/\Nu_1$ to the connected component $\mathcal{K}_2$ of the identity of the Lie group 
$\mathrm{Isom}(E^2/\Nu_2)$. To simplify, we apply the epimorphism $\Omega: \mathrm{Aff}(E^2/\Nu_2) \to 
\mathrm{Out}(\Nu_2)$ whose kernel is $\mathcal{K}_2$ by Theorem 3 of \cite{R-T-I}. 
This cancels the action of $(Dp_1)_\star$. 
To simply further, we identify $\mathrm{Out}(\Nu_2)$ with $\mathrm{GL}(2,\integers)$ as in Lemma \ref{L25}. 
The map $(\ov\phi)_\sharp: \mathrm{Aff}(E^2/\Nu_1) \to \mathrm{Aff}(E^2/\Nu_2)$ is induced 
by conjugating by the affinity $(\ov\phi)_\star.$

It is now clear that the pair $(\Gamma_1, \Gamma_2)$ is enantiomorphic 
because in the orientation-preserving case, there is no element of $\mathrm{SL}(2,\integers)$ 
that conjugates  $AC$ to $(AC)^{-1}$, and in the orientation-reversing case, 
there is no element of  $\mathrm{GL}(2,\integers)$ of determinant $-1$ that conjugates $AC$ to $AC$. 
The enantiomorphic IT number pairs (144, 145), (169, 170), and (171, 172) are enantiomorphic by a similar argument. 

The second enantiomorphic pair $(\Gamma_1, \Gamma_2)$ has IT numbers 91 and 95. 
The co-Seifert fibration of $E^3/\Gamma_1$ is described in Table \ref{T17}. 
The action of the dihedral structure group of order 8 on $\circ \times \mathrm{O}$ is given by 
$($h-ref., ref.$)$, $($4-rot., 4-rot.$)$. 
The action of the structure group for $E^3/\Gamma_2$ is given by 
$($d-ref., ref.$)$, $($4-rot.$^{-1}$, 4-rot.$)$, 
since the classifying pair is  $\{$h-ref., d-ref.$\}$. 
Recall that h-ref.\,$ = A_\star$ and d-ref.\,$ = C_\star$. 

Let $\Nu_i$ be the unique 2-dimensional, complete, normal subgroup of $\Gamma_i$ for $i = 1, 2$. 
Then $\Gamma_i/\Nu_i$ is an infinite dihedral group for each $i = 1, 2$. 
By considering the image of a pair of Coxeter generators of $\Gamma_1/\Nu_1$, the same argument as before  
shows that the pair $(\Gamma_1, \Gamma_2)$ is enantiomorphic 
because in the orientation-preserving case, there is no element of $\mathrm{SL}(2,\integers)$ that conjugates the pair $(A, C)$ to the pair $(C, A)$, and in the orientation-reversing case,  
there is no element of  $\mathrm{GL}(2,\integers)$ of determinant $-1$ that conjugates the pair $(A, C)$ to itself.  
The enantiomorphic IT number pairs (92, 96), (151, 153), (152, 154), (178, 179), and (180, 181) are enantiomorphic 
by a similar argument.

\end{document}